\newcommand\BibTeX{{\rmfamily B\kern-.05em \textsc{i\kern-.025em b}\kern-.08em
T\kern-.1667em\lower.7ex\hbox{E}\kern-.125emX}}
\newcommand{\mc}[1]{\mathcal {#1}}
\newcommand{\dg}{{\dagger}}
\newcommand{\dt}{{\tiny{\odot}} }
\newcommand{\ba}{{\bf{a}}}
\newcommand{\bb}{{\bf{b}}}
\newcommand{\bc}{{\bf{c}}}
\newcommand{\bx}{{\bf{x}}}
\newcommand{\by}{{\bf{y}}}
\newcommand{\bz}{{\bf{z}}}
\newcommand{\bu}{{\bf{u}}}
\newcommand{\bv}{{\bf{v}}}
 \def\rn{{\mathbb R}^{p}}
\def\rt{{\mathbb{R}^{n\times n \times n_3\times \cdots\times n_p}}}
\newtheorem{example}{Example}
\definecolor{lime}{HTML}{A6CE39}
\definecolor{lightblue}{rgb}{0.0, 0.0, 0.5}
\DeclareRobustCommand{\orcidicon}{%
	\begin{tikzpicture}
	\draw[lime, fill=lime] (0,0)
	circle [radius=0.16]
	node[white] {{\fontfamily{qag}\selectfont \tiny ID}};
	\draw[white, fill=white] (-0.0625,0.095)
	circle [radius=0.007];
	\end{tikzpicture}
	\hspace{-2mm}
}
\xdef\csname orcid\x\endcsname{\noexpand\href{https://orcid.org/\csname orcidauthor\x\endcsname}{\noexpand\orcidicon}}
\begin{document}

\title{
Computation of Generalized Inverses of Tensors via $t$-Product
}

\author[1]{Ratikanta Behera}

\author[2]{Jajati Keshari Sahoo}

\author[1]{R. N. Mohapatra}

\author[1]{M. Zuhair Nashed}

\authormark{RATIKANTA BEHERA \textsc{et al}}

\address[1]{\orgdiv{Department of Mathematics}, \orgname{University of Central Florida, Orlando}, \orgaddress{\state{Florida}, \country{USA}}}

\address[2]{\orgdiv{Department of Mathematics}, \orgname{BITS Pilani, K.K. Birla Goa Campus}, \orgaddress{\state{Goa}, \country{India}}}

\corres{R.N. Mohaptra, Department of Mathematics, University of Central Florida, USA. \email{ram.mohapatra@ucf.edu }\\ 
Jajati Keshari Sahoo, Department of Mathematics, BITS Pilani, K.K. Birla Goa Campus, Goa, India. \email{jksahoo@goa.bits-pilani.ac.in}\\
\\
{\bf Funding Information}\\
 Mohapatra Family Foundation and the College of Graduate Studies, University of Central Florida, Orlando, 32826, Florida, USA}

\abstract[Abstract]{
Generalized inverses of tensors play increasingly important
roles in computational mathematics and numerical analysis. It is appropriate to develop the theory of generalized inverses of tensors within the algebraic structure of a ring. In this paper, we study different generalized inverses of tensors over a commutative ring and a non-commutative ring.  Several numerical examples are provided in support of the theoretical results. We also propose algorithms for computing the inner inverses, the Moore-Penrose inverse, and weighted Moore-Penrose inverse of tensors over a non-commutative ring. The prowess of some of the results is demonstrated by applying these ideas to solve an image deblurring problem.
}

\keywords{ Ring, Generalized inverses, Tensor, T-product, Moore-Penrose inverse, Image deblurring.
\vskip 0.1cm
{\bf AMS SUBJECT CLASSIFICATIONS:}
15A09,~1569}

\maketitle

\section{Introduction}
One of the basic operations in linear algebra is matrix multiplication;  it expresses the product of two matrices to form a new matrix. A tensor is a higher-dimensional generalization of a matrix (i.e., a first-order tensor is a vector, a second-order tensor is a matrix), but there are more than one way to multiply two tensors \cite{Braman10, BaderTam06}. Even some basic matrix multiplication concepts cannot be generalized to tensor multiplication in a unique manner \cite{Kolda09Rev, Kruskal77}. This leads to the study of different types of tensor products \cite{BaderTam06,MiaYimin20},  which have recently attracted a great deal of interest (see \cite{MiaYimin20, Zhu15, Kolda09Rev, miao2020t, wang2020robust, Ragn12}). In this connection, the inverses and generalized inverses over different product of tensors  \cite{jin2017,SahooBeS19,BehNS19, Behera2018} have generated a tremendous amount of interest in mathematics, physics, computer science, and engineering.

In mathematics, a ring is an algebraic structure with two binary operations (addition and multiplication) over a set satisfying certain requirements. This structure facilitates the fundamental physical laws, such as those underlying special relativity and symmetry phenomena in molecular chemistry. In the literature, many different approaches have been proposed for the generalized inverse of a ring \cite{rakic, Koliha07, Zhu15}, where elements of the ring are scalars, vectors, or matrices. However, it is impractical to analyze quantities whose elements are addressed by more than two indices in a ring, (for example, a vector has one index and a matrix has two indices). This inconvenience can be easily overcome, thanks to tensors \cite{BaderTam06, Kolda09Rev}.  In connection with binary operations of the algebraic structure, we consider $+$ for addition and use two different binary operations, i.e.,  $*$ and $\dt$, for multiplication. In this paper, we study the generalized inverses of tensors over a commutative ring with the binary operations $(+, \dt)$ and a non-commutative ring with involution using $(+, *)$ as a binary operations.

Kilmer and Martin \cite{kilmer11} proposed a closed multiplication operation between tensors referred to as the t-product. In fact, the multiplication of two tensors based on the $t$-product takes advantage of the circulant-type structure \cite{kilmer11, Kilmer13}, which allows one to compute efficiently using the Fast Fourier Transform and extend many concepts from linear algebra to tensors. 
 
A summary of the main facets of this paper are given below by the following bullet points.
\begin{enumerate}
 \setlength{\parskip}{0pt}
\item[$\bullet$] Introduction of different generalized inverses of tensors (via $t$-product) over a commutative ring and a non-commutative ring. 
\item[$\bullet$] Determination of necessary and sufficient conditions for reverse order law for the Moore-Penrose inverse and $\{i, j, k\}$ -inverses of tensors over a non-commutative ring.
\item[$\bullet$]  Discussion of a few algorithms for computing generalized inverse, the Moore-Penrose inverse and weighted Moore-Penrose inverse of tensors over a non-commutative ring.
\item[$\bullet$] Application of generalized inverses to colour image deblurring.
\end{enumerate}

Let $V$ and $W$ be vector spaces (both real or both complex). Let $A$ be a linear transformation from $V$ into $W$. Denote the null space of $A$ and the range of $A$ by $\mathcal{N}(A)$ and $\mathcal{R}(A),$ respectively. Let $M$ be an algebraic complement of $\mathcal{N}(A)$ in $V$ and $S$ be an algebraic complement of $\mathcal{R}(A)$ in $W$, so $V = \mathcal{N}(A) \oplus  M$ and $W = \mathcal{R}(A) \oplus S.$ Let $P$ be the algebraic projector (linear idempotent transformation) of $V$ onto $M$, and let $Q$ be the algebraic projector of $W$ onto $S$.  Then there exists a unique linear transformation $X$ on $W$ into $V$ that satisfies the following equations:
\begin{equation*}
AXA = A, ~~~~XAX = X, ~~~~XA = P~~ \mbox{and}~~ AX = Q. 
\end{equation*}
The transformation $X$ is called the (algebraic) generalized inverse of $A$ relative to the projectors $P$ and $Q,$ and is denoted by $A^{\sharp}_{P,Q}$. In the case when $A$ is a bounded linear operator on a Hilbert space $V$ into Hilbert space $W$, the choice of orthogonal projector $P$ and $Q$ gives the Moore-Penrose inverse. {\it ``Inverses''} that satisfy a subset of the above four equations are of interest in many applications. A linear operator $X$ is called an {\it inner inverse} if it satisfies $AXA =A$ and an {\it outer inverse} if it satisfies $XAX=X$.  Other {\it ``inverses"} are defined to satisfy one of the four equations coupled with additional equations, such as the Drazin inverse. Various generalized inverses serve different purposes. For detailed expositions of  generalized inverses of matrices including computational aspects and applications, we refer to the following  books: Ben-Israel and Greville \cite{Ben-Israel}, Rao and Mitra \cite{raobook},  Campbell and Meyer \cite{Campbell91}.  Wei et al. \cite{Weibook}, and Nashed \cite{nashed2014generalized, Nashed87,Nashed70}.
A comprehensive annotated bibliography of 1876 references is included in Nashed \cite{NASHED19761}. For the theory of generalized inverses on (not necessarily finite dimensional) vector spaces and on Banach and Hilbert spaces, see Nashed \cite{Nashed87}. For various equivalent definitions of the Moore-Penrose of matrices and linear operators, see Nashed \cite{Nashed71}.

The purpose of this paper is twofold. Firstly, we study generalized inverses of tensors over a commutative ring. This part is motivated by the work of Drazin \cite{Drazin12, Drazin16, Darzin20} and Zhu \cite{Ke18, Huihu18} and Raki\'c \cite{rakic} in rings with involution. Here we relate the concept of a well-supported element in a ring with involution and study the group inverse of tensors along with the class of $(\textbf{b}, \textbf{c})$-inverses in a commutative ring. Secondly, we discuss the generalized inverse of tensors over non-commutative ring, which is an extension of a matrix over a ring (see \cite{Bapat90, Manju94, Zhu15}, also see the recent papers on tensor \cite{Braman10,BramanK07,liangBing2019, jin2017}). Specifically, we focus on investigations of $\{i, j, k\}$-inverses, the Moore-Penrose inverse and the weighted Moore-Penrose inverse of tensors along with several characterizations of these inverses in a non-commutative ring with involution. Our aim is to focus on generalized inverses of tensors with a specific multiplication concept over a  ring. We present algorithms for computing different generalized inverses of tensors in a non-commutative ring.  This algorithms may open to the door to other types of tensor related problems. As an application, we use the tensor representation algorithm in image deblurring.

The outline of the paper is as follows. Section 2 introduces different generalized inverses of tensors over a commutative ring. In Section 3, we introduce different generalized inverses of tensors over a non-commutative ring with involution. Further, we discuss a few necessary and sufficient conditions for existence of such inverses in this section. We also develop algorithms for computing generalized inverses over a non-commutative ring. An application of the Moore-Penrose inverse on image reconstruction is discussed in Section 4. Section 5 devoted to a brief conclusion. 

\section{Tensors over commutative ring}
It is well-known that  the tensors are a multi-dimensional array of numbers, sometimes it is called a multi-way or a multi-mode array.  For example, a  matrix is a second order tensor or a  two way tensor. 
An element $\textbf{a}=(a_1, a_2, \cdots,  a_p)^T,$ where the entries $a_1, a_2, \cdots,a_p$ are elements from the field of real number $\mathbb{R}$, is called a first order tensor with entries from $\mathbb{R}$. Following standard notation, the set of all first order tensors with entries from the field of real number $\mathbb{R}$ is denoted by $\mathbb{R}^p$. This section is focused on $\mathbb{R}^p$. The circulant of an element $\textbf{a}=(a_1, a_2, \cdots a_{p})^T \in \mathbb{R}^p$ is defined as 
\begin{equation*}
 \text{circ}(\bf{a}) = 
\begin{bmatrix}
a_1 & a_{p} & \cdots & a_2\\
a_2 & a_1    &  \cdots & a_3\\
\vdots  &  \vdots  & \vdots  &    \vdots\\
a_{p}& a_{p-1}& \cdots & a_1
 \end{bmatrix}.
\end{equation*}
We recall the following result which will be used in Section 2.1.
\begin{lemma}{(\cite{BramanK07}, Theorem 3.2)} $(\mathbb{R}^p , +,  \dt)$ is a commutative ring with unity, where the addition `$+$', multiplication `$\odot$' and the unity respectively defined by 
\begin{eqnarray*}
\ba+\bb &=&  (a_1, a_2, \cdots a_p)^T +(b_1, b_2, \cdots, b_p)^T=(a_1+b_1, a_2+b_2, \cdots, a_p+b_p)^T,\\
\ba\odot\bb &=&  \text{circ}({\bf{a}})\bb = 
\begin{bmatrix}
a_1 & a_{p} & \cdots & a_2\\
a_2 & a_1    &  \cdots & a_3\\
\vdots  &  \vdots  & \vdots  &    \vdots\\
a_{p}& a_{p-1}& \cdots & a_1
 \end{bmatrix}
 \begin{bmatrix}
b_1\\
b_2\\
\vdots\\
b_{p}
\end{bmatrix},~~\textnormal{~and~} {\bf{i}}=(1,0,0 \cdots,0)^T \textnormal{~is the unity of~} \mathbb{R}^p. 
\end{eqnarray*}

\end{lemma}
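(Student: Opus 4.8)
The plan is to reduce every ring axiom to the corresponding statement about circulant matrices, transported back along the map $\psi\colon \mathbb{R}^p \to \mathbb{R}^{p\times p}$ given by $\psi(\ba) = \text{circ}(\ba)$. First, $(\mathbb{R}^p, +)$ is evidently an abelian group: addition is componentwise, so commutativity, associativity, the zero element $(0,\dots,0)^T$, and additive inverses are all inherited directly from $\mathbb{R}$. It is also immediate from the circulant structure that $\psi$ is additive, $\text{circ}(\ba + \bb) = \text{circ}(\ba) + \text{circ}(\bb)$, and injective, since $\ba$ is recovered as the first column of $\text{circ}(\ba)$; in particular $\text{circ}(\ba) = 0$ forces $\ba = 0$.

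The crux of the argument is the multiplicative identity
\begin{equation*}
\text{circ}(\ba \odot \bb) = \text{circ}(\ba)\,\text{circ}(\bb).
\end{equation*}
To obtain it I would invoke two classical facts about circulants: (i) the product of two $p\times p$ circulant matrices is again circulant, and (ii) a circulant matrix is completely determined by its first column. The first column of $\text{circ}(\ba)\,\text{circ}(\bb)$ equals $\text{circ}(\ba)$ applied to the first column $\bb$ of $\text{circ}(\bb)$, i.e.\ $\text{circ}(\ba)\bb = \ba \odot \bb$, which is exactly the first column of $\text{circ}(\ba \odot \bb)$; by (i) both matrices are circulant, and by (ii) equal first columns force equality. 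The same two facts, together with the fact that circulant matrices commute under multiplication, yield $\text{circ}(\ba)\,\text{circ}(\bb) = \text{circ}(\bb)\,\text{circ}(\ba)$.

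With this identity in hand the ring axioms follow mechanically. Distributivity is in fact immediate from linearity of the matrix--vector product: $\ba \odot (\bb + \bc) = \text{circ}(\ba)(\bb + \bc) = \ba\odot\bb + \ba\odot\bc$, and the other side follows the same way using additivity of $\psi$. For associativity I would compute $\text{circ}((\ba\odot\bb)\odot\bc) = \text{circ}(\ba)\text{circ}(\bb)\text{circ}(\bc) = \text{circ}(\ba\odot(\bb\odot\bc))$ from associativity of matrix multiplication, then cancel $\psi$ by injectivity; commutativity $\ba\odot\bb = \bb\odot\ba$ follows identically from the commuting of circulants. Finally $\text{circ}(\mathbf{i}) = I$, so $\mathbf{i}\odot\ba = \text{circ}(\mathbf{i})\ba = \ba$, which with commutativity makes $\mathbf{i}$ a two-sided unity.

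The only genuinely substantive points are the two structural facts (i) and (ii) about circulants, together with their commutativity. These can be verified by a direct index computation, but the cleanest route is the simultaneous diagonalization $\text{circ}(\ba) = F^{-1}\,\text{diag}(F\ba)\,F$ by the discrete Fourier matrix $F$ (up to the usual normalization) --- which also matches the FFT-based viewpoint underlying the $t$-product used throughout the paper. From this factorization, facts (i), (ii), and the commutativity of circulant products all drop out at once, since diagonal matrices commute and the diagonal entries $F\ba$ determine $\ba$. I expect this diagonalization, rather than the ring-axiom bookkeeping, to be the main conceptual step.
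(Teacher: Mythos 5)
Your proof is correct, but note that the paper itself does not prove this lemma at all: it simply recalls it from the literature, citing Theorem 3.2 of Braman's paper (reference \cite{BramanK07}), so there is no internal argument to compare against. What you have supplied is a complete, self-contained justification, and its structure is sound: the map $\ba \mapsto \text{circ}(\ba)$ is additive and injective (the first column recovers $\ba$), the identity $\text{circ}(\ba\odot\bb)=\text{circ}(\ba)\,\text{circ}(\bb)$ follows from your facts (i) and (ii) exactly as you argue, and once $\text{circ}$ is known to be an injective ring homomorphism onto the commutative algebra of circulants, every axiom (associativity, commutativity, distributivity, unity $\text{circ}(\mathbf{i})=I$) transports back by injectivity. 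The one point worth making explicit is that fact (i) and the commutativity of circulants are genuinely needed and not free: your suggested route via the simultaneous diagonalization $\text{circ}(\ba)=F^{-1}\mathrm{diag}(F\ba)F$ handles both at once (diagonal matrices commute, and $F^{-1}\mathrm{diag}(d)F$ is circulant with first column $F^{-1}d$), with the minor caveat that the diagonalization lives over $\mathbb{C}$ while the ring is over $\mathbb{R}$ --- harmless here, since the products being compared are real matrices and equality can be checked in $\mathbb{C}^{p\times p}$. Your approach also dovetails nicely with the paper's later FFT-based block-diagonalization of the $t$-product, so it is arguably the ``right'' proof in context; the alternative, equally valid but less illuminating, is a direct index computation identifying $\odot$ with circular convolution.
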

\begin{definition}
Let  $(\mathbb{R}^p , +,  \dt)$  be a commutative ring. An element  $\ba (\neq {\bf{0}}) \in \mathbb{R}^p$ is said to be a zero-divisor if there exists an element  $\bb (\neq {\bf{0}}) \in \mathbb{R}^p$, such that $\ba \odot \bb  ={\bf{0}}$.
\end{definition}

\begin{remark}
 $(\mathbb{R}^p , +,  \dt)$ is not an integral domain as shown in the next example.
\end{remark}

\begin{example}
Let $\ba = (a, a, a)^T$ with $a\neq 0$. Consider $\bb =(1, -1, 0)^T\in \mathbb{R}^3$. Then 
\begin{equation*}
\ba \odot \bb =  \text{circ}(\ba)\bb=
\begin{bmatrix}
a & a & a\\
a & a  &  a\\
a & a & a
 \end{bmatrix} 
\begin{bmatrix}
1 \\
-1 \\
0
 \end{bmatrix}=
 \begin{bmatrix}
0 \\
0 \\
0
 \end{bmatrix}
 ={\bf{0}}.
\end{equation*}
\end{example}
Thus $\ba$ is zero-divisor in $\mathbb{R}^p$.  

\subsection{Generalized Inverses}
In this section we discuss different types of generalized inverses on $(\mathbb{R}^p , +,  \dt)$. For convenience we use the known notation $\mathbb{R}^p$ for the ring $(\mathbb{R}^p , +,  \dt)$. 
\begin{definition}
Let $\textbf{a} \in \mathbb{R}^p$. The left and right ideals generated by $\textbf{a}$ are denoted by $\mathbb{R}^p\dt\textbf{a}$ and $\textbf{a}\dt\mathbb{R}^p$  which are respectively defined as
\begin{equation*}
\mathbb{R}^p\dt \textbf{a}=\{ \textbf{x}\dt\textbf{a} ~:\textbf{x}\in \mathbb{R}^p \} ~\textnormal{and}~
    \textbf{a}\dt \mathbb{R}^p=\{ \textbf{a}\dt \textbf{x} ~:\textbf{x}\in \mathbb{R}^p \}.
\end{equation*}
\end{definition}
We also denote  $\ba\dt\mathbb{R}^p\dt\bb =\{\ba\dt\bx\dt\bb:~\bx \in\mathbb{R}^p\}$. The left and right annihilator of an element  are defined as follows.
\begin{definition}
The left and right annihilator of $\ba\in\rn$, respectively denoted by $\textup{lann}(\textbf{a})$ and
 $\textup{rann}(\textbf{a})$, are defined as 
\begin{equation*}
\textup{lann}(\textbf{a})=\{\textbf{y}\in \mathbb{R}^p~: \textbf{y}\dt \textbf{a}=\textbf{0} \}\mbox{ and }\textup{rann}(\textbf{a})=\{ \textbf{y}\in \mathbb{R}^p~:\textbf{a}\dt \textbf{y}=\textbf{0} \}.
\end{equation*}
\end{definition}

\begin{definition}\label{bcinverse}
Let $\ba, \bb, \bc\in\mathbb{R}^p$. An element $\by\in\mathbb{R}^p$ is called a $(\bb, \bc)$-inverse of $\ba$ if it satisfies 
\begin{enumerate}
 \setlength{\parskip}{0pt}
    \item[(a)] $\by\in\bb\dt\mathbb{R}^p\dt\by\cap\by\dt\mathbb{R}^p\dt\bc$,
    \item[(b)] $\by\dt\ba\dt\bb=\bb$ and $\bc\dt\ba\dt\by=\bc$.
\end{enumerate}
\end{definition}

\begin{example}\rm
Let $\ba=(0,1)^t=\bc, \bb=(1,0)^t\in\mathbb{R}^2$. Consider an element $\by=(0,1)^t\in\mathbb{R}^2$. We can verify that 
\begin{center}
    $\ba\dt\bb= \text{circ}(\ba)\bb =\begin{bmatrix}
       0 & 1\\
       1 & 0
    \end{bmatrix} \begin{bmatrix}
         1\\
         0
    \end{bmatrix}=\begin{bmatrix}
         0\\
         1
    \end{bmatrix} \mbox{ ~and~ } \bc\dt\ba= \text{circ}(\bc)\bb =\begin{bmatrix}
       0 & 1\\
       1 & 0
    \end{bmatrix} \begin{bmatrix}
         0\\
         1
    \end{bmatrix}=\begin{bmatrix}
         1\\
         0
    \end{bmatrix}$,
\end{center}

\begin{center}
    $\by\dt(\ba\dt\bb)=\begin{bmatrix}
       0 & 1\\
       1 & 0
    \end{bmatrix} \begin{bmatrix}
         0\\
         1
    \end{bmatrix}=\begin{bmatrix}
         1\\
         0
    \end{bmatrix}=\bb \mbox{ ~and~ } (\bc\dt\ba)\dt\by=\begin{bmatrix}
       1 & 0\\
       0 & 1
    \end{bmatrix} \begin{bmatrix}
         0\\
         1
    \end{bmatrix}=\begin{bmatrix}
         0\\
         1
    \end{bmatrix}=\bc$.
\end{center}
Further, $\bb\dt\bf{u}\dt\by=\by$ and $\by\dt\bf{v}\dt\bc=\by$ for an ${\bf{u}}=(1,0)^t$ and a ${\bf{v}}=(0,1)^t$. Thus $\by$ is the $(\bb, \bc)$-inverse of $\ba$. 
\end{example}
If $\by$ is the $(\bb,\bc)$ inverse of $\ba\in\rn$, then we have
\begin{equation*}
    \bb=\by\dt\ba\dt\bb=\bb\dt\bf{s}\dt\by\dt\ba\dt\bb=\bb\dt\bx\dt\bb, \mbox{~where} ~\bf{s}\in\rn ~~\mbox{and} ~~\bx=\bf{s}\dt\by\dt\ba\in\rn.
\end{equation*}
Similarly, $\bc=\bc\dt\bf{z}\dt\bc$ for some $\bz\in\rn$. This leads to the following result.

\begin{proposition}
Let $\ba,~\bb,~\bc\in\rn$. If the $(\bb, \bc)$ inverse of an element $\ba$ exists, then $\bb$ and $\bc$ are both regular.
\end{proposition}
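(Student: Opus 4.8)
The plan is to prove the statement by verifying directly that each of $\bb$ and $\bc$ satisfies the defining identity of a regular element, namely $\bb = \bb\dt\bx\dt\bb$ and $\bc = \bc\dt\bz\dt\bc$ for suitable $\bx, \bz \in \rn$. The two halves are completely symmetric, so I would carry out the argument for $\bb$ in full and then mirror it for $\bc$.

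First I would unpack the two clauses of Definition \ref{bcinverse} into usable equations. From clause (a), the left-sided membership $\by \in \bb\dt\rn\dt\by$ produces an element $\bf{s}\in\rn$ with $\by = \bb\dt\bf{s}\dt\by$, while the right-sided membership $\by\in\by\dt\rn\dt\bc$ produces $\bf{t}\in\rn$ with $\by = \by\dt\bf{t}\dt\bc$. From clause (b) I record the two absorption identities $\by\dt\ba\dt\bb = \bb$ and $\bc\dt\ba\dt\by = \bc$. The entire proof then consists of substituting each membership expression of $\by$ into the appropriate absorption identity.

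For $\bb$: starting from $\bb = \by\dt\ba\dt\bb$ and replacing the leading factor $\by$ by $\bb\dt\bf{s}\dt\by$, associativity of $\dt$ regroups the product as $\bb\dt(\bf{s}\dt\by\dt\ba)\dt\bb$, so that $\bb = \bb\dt\bx\dt\bb$ with $\bx := \bf{s}\dt\by\dt\ba \in \rn$; this is exactly the regularity of $\bb$. For $\bc$: starting from $\bc = \bc\dt\ba\dt\by$ and replacing the trailing factor $\by$ by $\by\dt\bf{t}\dt\bc$, regrouping yields $(\bc\dt\ba\dt\by)\dt\bf{t}\dt\bc = \bc\dt\bf{t}\dt\bc$, where I have applied the identity $\bc\dt\ba\dt\by = \bc$ a second time; hence $\bc = \bc\dt\bz\dt\bc$ with $\bz := \bf{t}$, giving the regularity of $\bc$.

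Since $(\rn, +, \dt)$ is a commutative ring with unity, the only ring axiom the regrouping steps actually invoke is associativity; commutativity is not needed here. There is no serious obstacle in this argument, so the one point that merits care is purely bookkeeping: the \emph{left-sided} membership $\by\in\bb\dt\rn\dt\by$ is what exposes a leading factor $\bb$ to match the identity $\by\dt\ba\dt\bb=\bb$, whereas the \emph{right-sided} membership $\by\in\by\dt\rn\dt\bc$ exposes a trailing factor $\bc$ to match $\bc\dt\ba\dt\by=\bc$. Pairing each side of clause (a) with the correct identity of clause (b) is the single place where one could go astray, and indeed the computation for $\bb$ has already been displayed in the lines preceding the statement, with the case of $\bc$ following by the symmetric substitution just described.
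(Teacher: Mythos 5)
Your proof is correct and follows essentially the same route as the paper: the paper proves regularity of $\bb$ by exactly your substitution $\by=\bb\dt{\bf{s}}\dt\by$ into $\bb=\by\dt\ba\dt\bb$ (yielding $\bx={\bf{s}}\dt\by\dt\ba$) and disposes of $\bc$ with ``similarly,'' which is the symmetric computation you wrote out. The only cosmetic difference is that for $\bc$ you reuse the identity $\bc\dt\ba\dt\by=\bc$ to get the witness $\bz={\bf{t}}$, whereas the paper's pattern would take $\bz=\ba\dt\by\dt{\bf{t}}$; both are equally valid.
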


Next, we discuss the existence  of $(\textbf{b}, \textbf{c})$-inverse.

\begin{theorem}\label{thm21.2}
Let $\ba, \bb, \bc\in\mathbb{R}^p$. Then  the $(\bb, \bc)$-inverse of $\ba$ exists if and only if
$\bb,\bc\in\mathbb{R}^p\dt\bc\dt\ba\dt\bb=\bc\dt\ba\dt\bb\dt\mathbb{R}^p.$
\end{theorem}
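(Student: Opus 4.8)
The plan is to set $\textbf{d}=\bc\dt\ba\dt\bb$ and reduce the stated two-sided condition to the single pair of memberships $\bb,\bc\in\mathbb{R}^p\dt\textbf{d}$. Since $(\mathbb{R}^p,+,\dt)$ is commutative (established above), every principal left ideal coincides with the corresponding principal right ideal, so $\mathbb{R}^p\dt\textbf{d}=\textbf{d}\dt\mathbb{R}^p$ holds automatically and carries no content; thus the substantive claim is the equivalence of the existence of a $(\bb,\bc)$-inverse of $\ba$ with $\bb,\bc\in\mathbb{R}^p\dt\textbf{d}$. I would prove the two implications separately.

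For necessity, I would assume $\by$ is a $(\bb,\bc)$-inverse and exploit condition (a) of Definition~\ref{bcinverse} to write $\by=\bb\dt\bx_1\dt\by$ and $\by=\by\dt\bx_2\dt\bc$ for some $\bx_1,\bx_2\in\mathbb{R}^p$. Substituting the second into the identity $\bb=\by\dt\ba\dt\bb$ from condition (b) yields $\bb=(\by\dt\bx_2\dt\bc)\dt\ba\dt\bb=\by\dt\bx_2\dt\textbf{d}\in\mathbb{R}^p\dt\textbf{d}$, and substituting the first into $\bc=\bc\dt\ba\dt\by$ yields $\bc=\bc\dt\ba\dt(\bb\dt\bx_1\dt\by)=\textbf{d}\dt\bx_1\dt\by\in\textbf{d}\dt\mathbb{R}^p=\mathbb{R}^p\dt\textbf{d}$. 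This is the easy direction and is essentially the computation already displayed just before the preceding proposition.

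The real work is sufficiency. Assuming $\bb=\bu\dt\textbf{d}$ and $\bc=\bv\dt\textbf{d}$ for some $\bu,\bv\in\mathbb{R}^p$, I would propose the candidate $\by:=\bb\dt\bv=\bu\dt\bc$, the two expressions agreeing by commutativity. Condition (b) then follows from one-line associativity computations: $\by\dt\ba\dt\bb=\bu\dt(\bc\dt\ba\dt\bb)=\bu\dt\textbf{d}=\bb$ and $\bc\dt\ba\dt\by=(\bc\dt\ba\dt\bb)\dt\bv=\textbf{d}\dt\bv=\bc$. The key auxiliary fact is the outer-inverse identity $\by\dt\ba\dt\by=\by$, which drops out of the first relation via $\by\dt\ba\dt\by=(\by\dt\ba\dt\bb)\dt\bv=\bb\dt\bv=\by$ upon writing the middle factor as $\by=\bb\dt\bv$. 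With this in hand, condition (a) is realized by exhibiting explicit coefficients: $\bb\dt(\bv\dt\ba)\dt\by=(\bb\dt\bv)\dt\ba\dt\by=\by\dt\ba\dt\by=\by$ shows $\by\in\bb\dt\mathbb{R}^p\dt\by$, and $\by\dt(\ba\dt\bu)\dt\bc=\by\dt\ba\dt(\bu\dt\bc)=\by\dt\ba\dt\by=\by$ shows $\by\in\by\dt\mathbb{R}^p\dt\bc$.

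I expect the main obstacle to be locating the correct candidate $\by=\bb\dt\bv=\bu\dt\bc$; once it is written down, every verification is a routine rearrangement using only associativity, the defining factorizations $\bb=\bu\dt\textbf{d}$ and $\bc=\bv\dt\textbf{d}$, and commutativity to identify the left- and right-sided ideals. A secondary point worth stating carefully is that commutativity is exactly what allows the single element $\by$ to satisfy both halves of condition (b) simultaneously and what collapses the two-sided ideal equality in the statement to a single membership test.
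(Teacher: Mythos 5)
Your proof is correct and essentially the same as the paper's: the necessity direction is the identical substitution of condition (a) into condition (b), and your sufficiency candidate $\by=\bu\dt\bc=\bb\dt\bv$ is exactly the paper's $\bx=\bu\dt\bc=\bb\dt\bv$, with the same verifications merely reorganized through the shorthand $\mathbf{d}=\bc\dt\ba\dt\bb$ and the intermediate identity $\by\dt\ba\dt\by=\by$. The only cosmetic difference is that you invoke commutativity explicitly to identify the left and right ideals $\mathbb{R}^p\dt\mathbf{d}=\mathbf{d}\dt\mathbb{R}^p$, which the paper handles implicitly by taking the factor $\bu$ on the left of $\mathbf{d}$ and $\bv$ on the right.
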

\begin{proof}
Let $\bx$ be the $(\bb, \bc)$-inverse of $\ba$. Then $\bb=\bx\dt\ba\dt\bb$, $\bc\dt\ba\dt\bx=\bc$ and $\bx=\bx\dt\bf{u}\dt\bc=\bb\dt\bf{v}\dt\bx$ for some $\bf{u}$ and $\bf{v}\in\mathbb{R}^p$. Now $\bb=\bx\dt\bf{u}\dt\bc\dt\ba\dt\bb\in\mathbb{R}^p\dt\bc\dt\ba\dt\bb$. Similarly, $\bc=\bc\dt\ba\dt\bx=\bc\dt\ba\dt\bb\dt\bf{v}\dt\bx\in \bc\dt\ba\dt\bb\dt\mathbb{R}^p$. Conversely, let $\bb\in\mathbb{R}^p\dt\bc\dt\ba\dt\bb$ and $\bc\in \bc\dt\ba\dt\bb\dt\mathbb{R}^p$. Then $\bb=\bf{u}\dt\bc\dt\ba\dt\bb$ and $\bc=\bc\dt\ba\dt\bb\dt\bf{v}$ for some $\bf{u},\bf{v}\in\mathbb{R}^p$. Further, $\bf{u}\dt\bc=\bb\dt\bf{v}$. Let $\bx=\bf{u}\dt\bc=\bb\dt\bf{v}$. Now 
\begin{center}
$\bb=\bf{u}\dt\bc\dt\ba\dt\bb=\bf{u}\dt(\bc\dt\ba\dt\bb\dt\bf{v})\dt\ba\dt\bb=\bb\dt\bf{v}\dt\ba\dt\bb=\bx\dt\ba\dt\bb$, \\
    $\bc=\bc\dt\ba\dt\bb\dt\bf{v}=\bc\dt\ba\dt(\bf{u}\dt\bc\dt\ba\dt\bb)\dt\bf{v}=\bc\dt\ba\dt\bf{u}\dt\bc=\bc\dt\ba\dt\bx$, \\
    $\bx=\bb\dt\bf{v}=\bb\dt\bf{v}\dt\ba\dt\bb\dt\bf{v}=\bb\dt\bf{v}\dt\ba\dt\bx\in\bb\dt\mathbb{R}^p\dt\bx$, and\\
    $\bx=\bf{u}\dt\bc=\bf{u}\dt\bc\dt\ba\dt\bf{u}\dt\bc=\bx\dt\ba\dt\bf{u}\dt\bc\in\bx\dt\mathbb{R}^p\dt\bc$.
\end{center}
Thus $\bx$ is a $(\bb, \bc)$-inverse of $\ba$.
\end{proof}
We now establish the uniqueness of the $(\bb,\bc)$-inverse. \begin{theorem}\label{unibc}
Let $\ba, \bb, \bc\in\mathbb{R}^p$. 
Then the $(\bb, \bc)$-inverse of $\ba$ is  unique. 
\end{theorem}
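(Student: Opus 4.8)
The plan is to show that any two $(\bb, \bc)$-inverses of $\ba$ must coincide, via a direct cross-substitution argument that plays the membership condition (a) of one inverse against the absorption equations (b) of the other. Suppose $\by_1$ and $\by_2$ are both $(\bb, \bc)$-inverses of $\ba$. From condition (a) applied to $\by_1$ I would extract an element $\bu\in\mathbb{R}^p$ with $\by_1=\bb\dt\bu\dt\by_1$, and from condition (a) applied to $\by_2$ an element $\bv\in\mathbb{R}^p$ with $\by_2=\by_2\dt\bv\dt\bc$. These two membership relations are the only consequences of (a) I will need.

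The key step is to substitute the absorption equations of (b), but taken from the \emph{other} inverse, into these two expressions. Since $\by_2$ is a $(\bb,\bc)$-inverse, (b) supplies $\by_2\dt\ba\dt\bb=\bb$; replacing the leading $\bb$ in the relation for $\by_1$ would give
\[
\by_1=(\by_2\dt\ba\dt\bb)\dt\bu\dt\by_1=\by_2\dt\ba\dt(\bb\dt\bu\dt\by_1)=\by_2\dt\ba\dt\by_1.
\]
Symmetrically, since $\by_1$ is a $(\bb,\bc)$-inverse, (b) supplies $\bc\dt\ba\dt\by_1=\bc$; replacing the trailing $\bc$ in the relation for $\by_2$ would give
\[
\by_2=\by_2\dt\bv\dt(\bc\dt\ba\dt\by_1)=(\by_2\dt\bv\dt\bc)\dt\ba\dt\by_1=\by_2\dt\ba\dt\by_1.
\]
Comparing the two displays then forces $\by_1=\by_2\dt\ba\dt\by_1=\by_2$, which is exactly the claimed uniqueness.

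I do not expect any deep obstacle here: the whole argument is a one-line regrouping once the correct pieces are selected, and the only care needed is the bookkeeping of which inverse contributes the membership relation from (a) and which contributes the absorption equation from (b). Note that associativity of $\dt$ is all that is used to regroup the products, so the uniqueness in fact holds without invoking commutativity of $\mathbb{R}^p$; I would present the two displayed chains as above and then read off $\by_1=\by_2$.
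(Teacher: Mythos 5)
Your proof is correct and is essentially the paper's own argument up to relabeling: the paper also cross-substitutes the absorption equations of one inverse into the membership relations of the other, arriving at the common middle term $\bx\dt\ba\dt\by$ (yours is the mirror-image $\by_2\dt\ba\dt\by_1$, obtained by swapping which inverse supplies the left versus the right membership from condition (a)). Your closing remark that only associativity is needed is accurate, and the paper's chain implicitly uses nothing more, so there is no substantive difference between the two proofs.
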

\begin{proof}
Suppose there exist two $(\bb, \bc)$-inverses of $\ba$, say $\bx$ and $\bf{y}$. From Definition \ref{bcinverse}, we obtain $\bx\dt\ba\dt\bb=\bb,~\bc\dt\ba\dt\bf{y}=\bc,~\bx=\bx\dt\bf{u}\dt\bc,~\bf{y}=\bb\dt\bf{v}\dt\bf{y},~\bx=\bb\dt\bf{v}\dt\bx$ for some $\bf{u}$ and $\bf{v}\in\mathbb{R}^p$. Now 
\begin{center}
    $ \bx=\bx\dt\bf{u}\dt\bc=\bx\dt\bf{u}\dt\bc\dt\ba\dt\bf{y}=\bx\dt\ba\dt\bf{y}=\bx\dt\ba\dt\bb\dt\bf{v}\dt\bf{y}=\bb\dt\bf{v}\dt\bf{y}=\bf{y}$.
\end{center}
\end{proof}
Note that $\bx=\bb\dt\bf{v}\dt\bx=\bx\dt\ba\dt\bb\dt\bf{v}\dt\bx=\bx\dt\ba\dt\bx$.
\begin{theorem}
Let $\ba, \bb, \bc\in\mathbb{R}^p$. Then  the following statements are equivalent:
\begin{enumerate}
 \setlength{\parskip}{0pt}
    \item[(i)] $(\bb, \bc)$-inverse of $\ba$ exists.
    \item[(ii)] $\mathbb{R}^p=\ba\dt\bb\dt\mathbb{R}^p\bigoplus \textup{rann}(\bc)=\mathbb{R}^p\dt\bc\dt\ba\bigoplus\textup{lann}(\bb)$.
    \item[(iii)] $\mathbb{R}^p=\ba\dt\bb\dt\mathbb{R}^p+ \textup{rann}(\bc)=\mathbb{R}^p\dt\bc\dt\ba+\textup{lann}(\bb)$.
\end{enumerate}
\end{theorem}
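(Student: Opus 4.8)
The plan is to prove the cycle (i) $\Rightarrow$ (ii) $\Rightarrow$ (iii) $\Rightarrow$ (i); the middle implication (ii) $\Rightarrow$ (iii) is immediate, since an internal direct sum is in particular a sum. The engine of the whole argument is the idempotent attached to a $(\bb,\bc)$-inverse. Writing $\by$ for the (unique, by Theorem \ref{unibc}) $(\bb,\bc)$-inverse of $\ba$, recall that $\by$ is an outer inverse, $\by\dt\ba\dt\by=\by$, as noted after Theorem \ref{unibc}. I would set $\bf{e}=\ba\dt\by$ and $\bf{f}=\by\dt\ba$ and check that both are idempotent, e.g. $\bf{e}\dt\bf{e}=\ba\dt(\by\dt\ba\dt\by)=\ba\dt\by=\bf{e}$; these two idempotents will carry the two direct sum decompositions in (ii).

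For (i) $\Rightarrow$ (ii) I would first collect, from Definition \ref{bcinverse}, the equations $\by\dt\ba\dt\bb=\bb$, $\bc\dt\ba\dt\by=\bc$ and the absorption properties $\by\in\bb\dt\mathbb{R}^p$ and $\by\in\mathbb{R}^p\dt\bc$. The image of $\bf{e}$ is pinned down by two inclusions: $\by\in\bb\dt\mathbb{R}^p$ gives $\bf{e}\dt\mathbb{R}^p=\ba\dt\by\dt\mathbb{R}^p\subseteq\ba\dt\bb\dt\mathbb{R}^p$, while $\ba\dt\bb=\ba\dt\by\dt\ba\dt\bb=\bf{e}\dt\ba\dt\bb$ gives $\ba\dt\bb\dt\mathbb{R}^p\subseteq\bf{e}\dt\mathbb{R}^p$, so $\bf{e}\dt\mathbb{R}^p=\ba\dt\bb\dt\mathbb{R}^p$. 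The annihilator of $\bf{e}$ is handled the same way: if $\bf{e}\dt\bx={\bf{0}}$ then $\bc\dt\bx=(\bc\dt\ba\dt\by)\dt\bx={\bf{0}}$, and conversely, writing $\by=\bf{m}\dt\bc$ (from $\by\in\mathbb{R}^p\dt\bc$), if $\bc\dt\bx={\bf{0}}$ then $\bf{e}\dt\bx=\ba\dt\bf{m}\dt\bc\dt\bx={\bf{0}}$; hence $\textup{rann}(\bf{e})=\textup{rann}(\bc)$. Since for any idempotent $\mathbb{R}^p=\bf{e}\dt\mathbb{R}^p\oplus(\bf{i}-\bf{e})\dt\mathbb{R}^p$ with $(\bf{i}-\bf{e})\dt\mathbb{R}^p=\textup{rann}(\bf{e})$, the first decomposition in (ii) drops out, and the second follows verbatim from $\bf{f}=\by\dt\ba$ using $\bc\dt\ba=\bc\dt\ba\dt\by\dt\ba$, $\bb=\by\dt\ba\dt\bb$ and $\by\in\bb\dt\mathbb{R}^p\cap\mathbb{R}^p\dt\bc$.

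For (iii) $\Rightarrow$ (i) I would run the two sum decompositions against the existence criterion of Theorem \ref{thm21.2}, namely $\bb,\bc\in\mathbb{R}^p\dt\bc\dt\ba\dt\bb=\bc\dt\ba\dt\bb\dt\mathbb{R}^p$. Expanding the unity as $\bf{i}=\ba\dt\bb\dt\bx+\bf{t}$ with $\bf{t}\in\textup{rann}(\bc)$ and multiplying on the left by $\bc$ gives $\bc=\bc\dt\ba\dt\bb\dt\bx\in\bc\dt\ba\dt\bb\dt\mathbb{R}^p$; expanding $\bf{i}=\bf{s}\dt\bc\dt\ba+\bf{u}$ with $\bf{u}\in\textup{lann}(\bb)$ and multiplying on the right by $\bb$ gives $\bb=\bf{s}\dt\bc\dt\ba\dt\bb\in\mathbb{R}^p\dt\bc\dt\ba\dt\bb$. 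Because $\mathbb{R}^p$ is commutative, the two one-sided principal ideals $\mathbb{R}^p\dt\bc\dt\ba\dt\bb$ and $\bc\dt\ba\dt\bb\dt\mathbb{R}^p$ coincide, so both $\bb$ and $\bc$ lie in this common ideal and Theorem \ref{thm21.2} produces the $(\bb,\bc)$-inverse.

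The most delicate step should be the two annihilator identities $\textup{rann}(\bf{e})=\textup{rann}(\bc)$ and $\textup{lann}(\bf{f})=\textup{lann}(\bb)$ in (i) $\Rightarrow$ (ii): each requires a defining equation of the $(\bb,\bc)$-inverse for one inclusion and an absorption property $\by\in\bb\dt\mathbb{R}^p\cap\mathbb{R}^p\dt\bc$ for the other, and one must be careful about the side on which each factor is absorbed. The remaining work is routine bookkeeping, with the commutativity of $\mathbb{R}^p$ removing the left/right asymmetry that would otherwise stand between the sum decompositions of (iii) and the two-sided ideal condition of Theorem \ref{thm21.2}.
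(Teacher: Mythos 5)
Your proof is correct, but it is organized differently from the paper's, so a comparison is worthwhile. For $(i)\Rightarrow(ii)$ the paper does not introduce idempotents at all: it invokes the existence criterion of Theorem \ref{thm21.2} to write $\bb=\bu\dt\bc\dt\ba\dt\bb$ and $\bc=\bc\dt\ba\dt\bb\dt\bv$, decomposes an arbitrary $\bx$ by hand as $\bx=\ba\dt\bb\dt\bv\dt\bx+\bz$ with $\bz=\bx-\ba\dt\bb\dt\bv\dt\bx$, checks $\bc\dt\bz=\bf{0}$ directly, and kills the intersection $\ba\dt\bb\dt\mathbb{R}^p\cap\textup{rann}(\bc)$ by the computation $\bb\dt{\bf t}=\bu\dt\bc\dt(\ba\dt\bb\dt{\bf t})=\bf{0}$. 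Your route factors the same decomposition through the idempotent $\mathbf{e}=\ba\dt\by$: indeed, since the element constructed in Theorem \ref{thm21.2} is $\by=\bb\dt\bv$, the paper's projector $\ba\dt\bb\dt\bv$ \emph{is} your $\mathbf{e}$, so the two splittings of $\mathbb{R}^p$ coincide element by element. What your packaging buys is conceptual reuse: the identities $\mathbf{e}\dt\mathbb{R}^p=\ba\dt\bb\dt\mathbb{R}^p$ and $\textup{rann}(\mathbf{e})=\textup{rann}(\bc)$ (and dually for $\mathbf{f}=\by\dt\ba$) isolate exactly why the decomposition works, and the general fact $\mathbb{R}^p=\mathbf{e}\dt\mathbb{R}^p\oplus(\bf{i}-\mathbf{e})\dt\mathbb{R}^p$ with $(\bf{i}-\mathbf{e})\dt\mathbb{R}^p=\textup{rann}(\mathbf{e})$ does the rest; this is the standard ring-theoretic viewpoint and generalizes verbatim to noncommutative settings. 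What the paper's version buys is economy: it needs neither the outer-inverse identity $\by\dt\ba\dt\by=\by$ nor the absorption properties $\by\in\bb\dt\mathbb{R}^p\cap\mathbb{R}^p\dt\bc$, only the two equations supplied by Theorem \ref{thm21.2}. (Minor remark: your appeal to uniqueness via Theorem \ref{unibc} is unnecessary — existence alone suffices.) Your $(iii)\Rightarrow(i)$ argument is essentially identical to the paper's: expand the unity against each sum, multiply by $\bc$ on the left and $\bb$ on the right respectively, and feed the conclusion into Theorem \ref{thm21.2}.
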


\begin{proof}
It is enough to show $(i)\Rightarrow (ii)$ and $(iii)\Rightarrow (i)$ since $(ii)\Rightarrow (iii)$ is trivial. \\
$(i)\Rightarrow (ii)$ Let the  $(\bb,\bc)$ inverse of  $\ba$ exists. Then
 \begin{equation*}
      \bb=\bf{u}\dt\bc\dt\ba\dt\bb \mbox{ and  }\bc=\bc\dt\ba\dt\bb\dt\bf{v} \mbox{   for some  }  \bf{u},\bf{v}\in\rn.
 \end{equation*}

Let $\bx\in\rn$ be any arbitrary vector and consider $\bf{z}=\bx-\ba\dt\bb\dt\bf{v}\dt\bx$. Then $\bc\dt\bf{z}=(\bc-\bc\dt\ba\dt\bb\dt\bf{v})\dt\bx=\textbf{0}$.
Thus $\bz\in\textup{rann}(c)$. In addition,  $\bx=\ba\dt\bb\dt\bf{v}\dt\bx+\bz\in \ba\dt\bb\dt\rn+\textup{rann}(\bc)$. This implies 
\begin{equation*}
\mathbb{R}^p=\ba\dt\bb\dt\rn+\textup{rann}(\bc).
\end{equation*}
 Further, if $\by\in \ba\dt\bb\dt\rn\cap\textup{rann}(\bc)$, then $\by=\ba\dt\bb\dt\bf{t}$ and $\bc\dt\by=\textbf{0}$ for some $\bf{t}\in\rn$. However,  $\bb\dt\bf{t}=\bu\dt\bc\dt\ba\dt\bb\dt\bf{t}=\bu\dt\bc\dt(\ba\dt\bb\dt\bf{t})=\bu\dt\bc\dt\by=\textbf{0}$. Therefore, $\by=\textbf{0}$, which implies $\ba\dt\bb\dt\rn+\textup{rann}(\bc)=\{\textbf{0}\}$ and hence $\mathbb{R}^p=\ba\dt\bb\dt\rn\bigoplus\textup{rann}(\bc)$. Similarly, we can show the other equality $\mathbb{R}^p=\rn\dt\bc\dt\ba\bigoplus\textup{lann}(\bb)$. \\
 $(iii)\Rightarrow (i)$ Let $\rn=\ba\dt\bb\dt\rn+\textup{rann}(\bc)$.  Then $\bc=\bc\dt \bf{i}=\bc\dt(\ba\dt\bb\dt\bu+\bv)$, where $\bu\in\rn$ and $\bv\in\textup{rann}(\bc)$. From $\bv\in\textup{rann}(\bc)$, we get $\bc\dt\bv=\bf{0}$. This yields $\bc=\bc\dt\ba\dt\bb\dt\bu\in\bc\dt\ba\dt\bb\dt\rn$. Using $\rn\dt\bc\dt\ba+\textup{lann}(\bb)$, we can show $\bb\in\rn\dt\bc\dt\ba\dt\bb$. Thus by Theorem \ref{thm21.2}, the $(\bb,\bc)$ inverse of $\ba$ exists.
\end{proof}

We next define the annihilator inverses: 

\begin{definition}
Let $\ba, \bb, \bx\in\mathbb{R}^p$. An element $\bx$ is called a left annihilator  $\bb$-inverse of $\ba$ if it satisfies 
\begin{center}
    $\bx\dt\ba\dt\bx=\bx,~\textup{lann}(\bx)=\textup{lann}(\bb)$.
\end{center}
\end{definition}

\begin{definition}
Let $\ba, \bc, \bx\in\mathbb{R}^p$. An element $\bx$ is called a right annihilator  $\bc$-inverse of $\ba$ if it satisfies 
\begin{center}
    $\bx\dt\ba\dt\bx=\bx,~\textup{rann}(\bx)=\textup{rann}(\bc)$.
\end{center}
\end{definition}

The uniqueness of  the annihilator inverse is discussed in the following result.
\begin{theorem}
Let $\ba, \bb, \bc\in\mathbb{R}^p$. If a left annihilator $\bb$-inverse (or right annihilator  $\bc$-inverse) of $\ba$ is exists then it is unique. 
\end{theorem}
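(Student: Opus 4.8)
The plan is to prove uniqueness of the left annihilator $\bb$-inverse directly; the right annihilator $\bc$-inverse then follows by the mirror-image argument. Suppose $\bx$ and $\by$ are both left annihilator $\bb$-inverses of $\ba$. By definition each is an outer inverse, $\bx\dt\ba\dt\bx=\bx$ and $\by\dt\ba\dt\by=\by$, and each shares its left annihilator with $\bb$, so in particular $\textup{lann}(\bx)=\textup{lann}(\bb)=\textup{lann}(\by)$. The proof will hinge on converting these outer-inverse identities into membership statements for the annihilators and then transferring information between $\bx$ and $\by$ through the equality $\textup{lann}(\bx)=\textup{lann}(\by)$.

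The key manipulation is to factor the outer-inverse identity on the left. From $\bx\dt\ba\dt\bx=\bx$ and $\textbf{i}\dt\bx=\bx$ I would write $(\bx\dt\ba-\textbf{i})\dt\bx=\textbf{0}$, which says exactly that $\bx\dt\ba-\textbf{i}\in\textup{lann}(\bx)$. Since $\textup{lann}(\bx)=\textup{lann}(\by)$, this forces $(\bx\dt\ba-\textbf{i})\dt\by=\textbf{0}$, that is, $\bx\dt\ba\dt\by=\by$. Running the same computation with the roles of $\bx$ and $\by$ interchanged gives $\by\dt\ba-\textbf{i}\in\textup{lann}(\by)=\textup{lann}(\bx)$ and hence $\by\dt\ba\dt\bx=\bx$.

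Finally I would invoke commutativity of the ring $(\mathbb{R}^p,+,\dt)$ guaranteed by the Lemma above: the products $\bx\dt\ba\dt\by$ and $\by\dt\ba\dt\bx$ coincide, so $\by=\bx\dt\ba\dt\by=\by\dt\ba\dt\bx=\bx$, which is the asserted uniqueness. For the right annihilator $\bc$-inverse I would repeat the argument factoring on the right: $\bx\dt(\ba\dt\bx-\textbf{i})=\textbf{0}$ places $\ba\dt\bx-\textbf{i}$ in $\textup{rann}(\bx)=\textup{rann}(\by)$, which yields $\by\dt\ba\dt\bx=\by$ and symmetrically $\bx\dt\ba\dt\by=\bx$, after which commutativity again collapses the two elements. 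I expect the only delicate point to be the opening algebraic step---recognizing that the single outer-inverse equation can be rewritten so that a unit-corrected factor lands inside the annihilator---together with the essential use of commutativity at the very end; once the two cross relations are in hand, no further computation is required.
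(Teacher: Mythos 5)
Your proposal is correct and follows essentially the same route as the paper's proof: rewrite the outer-inverse identity so that $\textbf{i}-\bx\dt\ba$ (equivalently, its negative) lies in $\textup{lann}(\bx)=\textup{lann}(\by)$, derive the cross relations $\bx\dt\ba\dt\by=\by$ and $\by\dt\ba\dt\bx=\bx$, and conclude $\bx=\by$ by commutativity of $(\mathbb{R}^p,+,\dt)$, with the mirror-image argument handling the right annihilator case. No gaps; the argument matches the paper's step for step.
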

\begin{proof}
Assume to the contrary, let $\bx$ and $\by$ be left annihilator $\bb$-inverse  of $\ba$. Then
\begin{center}
  $\bx\dt\ba\dt\bx=\bx, \by\dt\ba\dt\by=\by,~\textup{lann}(\bx)=\textup{lann}(\bb)=\textup{lann}(\by)$.  
\end{center}
From $\bf{i}-\bx\dt\ba\in\textup{lann}(\bx)=\textup{lann}(\by)$, we get $\by=\bx\dt\ba\dt\by$. Similarly, we can show
$\bx=\by\dt\ba\dt\bx$. This further implies $\bx=\bx\dt\ba\dt\by$ due to the fact that $\rn$ is commutative. 
Hence $\bx=\by$. Further, if $\bx$ and $\by$ are two right annihilator $\bc$-inverse  of $\ba$. Then
\begin{center}
  $\bx\dt\ba\dt\bx=\bx, \by\dt\ba\dt\by=\by,~\textup{rann}(\bx)=\textup{rann}(\bc)=\textup{rann}(\by)$.  
\end{center}
From $\bf{i}-\ba\dt\bx\in\textup{rann}(\bx)=\textup{rann}(\by)$, we get $\by=\by\dt\ba\dt\bx$. Similarly, we can show
    $\bx=\bx\dt\ba\dt\by=\by\dt\ba\dt\bx$. Therefore, $\bx=\by$.
\end{proof}
We now present an equivalent characterization of left  annihilator $\bb$-inverse.

\begin{lemma}\label{lem2.18}
Let $\ba, \bb, \bx\in \rn$. Then the following  are equivalent:
\begin{enumerate}
 \setlength{\parskip}{0pt}
    \item[(i)] $\bx\dt\ba\dt\bx=\bx,~\textup{lann}(\bx)=\textup{lann}(\bb)$.
    \item[(ii)] $\bx\dt\ba\dt\bb=\bb,~\textup{lann}(\bb)\subseteq\textup{lann}(\bx)$.
\end{enumerate}
\end{lemma}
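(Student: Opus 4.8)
The plan is to prove the two implications by repeatedly exploiting a single auxiliary element, namely $\bf{i}-\bx\dt\ba$, exactly the device used in the preceding uniqueness arguments. The point is that membership of this element in one or another left annihilator, together with the commutative ring structure of $\rn$ guaranteed by Lemma~1, will carry all the weight; no deeper machinery is needed.

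For $(i)\Rightarrow(ii)$ the inclusion $\textup{lann}(\bb)\subseteq\textup{lann}(\bx)$ is immediate from the hypothesis $\textup{lann}(\bx)=\textup{lann}(\bb)$. To obtain $\bx\dt\ba\dt\bb=\bb$, I would first observe that $(\bf{i}-\bx\dt\ba)\dt\bx=\bx-\bx\dt\ba\dt\bx=\textbf{0}$, so $\bf{i}-\bx\dt\ba\in\textup{lann}(\bx)=\textup{lann}(\bb)$. Right-multiplying $\bf{i}-\bx\dt\ba$ by $\bb$ then gives $\bb-\bx\dt\ba\dt\bb=\textbf{0}$, which is precisely the required identity.

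For $(ii)\Rightarrow(i)$ I would first upgrade the given inclusion to an equality of annihilators. Take any $\by\in\textup{lann}(\bx)$, so $\by\dt\bx=\textbf{0}$; right-multiplying by $\ba\dt\bb$ and using associativity together with $\bx\dt\ba\dt\bb=\bb$ yields $\by\dt\bb=\by\dt(\bx\dt\ba\dt\bb)=(\by\dt\bx)\dt\ba\dt\bb=\textbf{0}$, hence $\by\in\textup{lann}(\bb)$. This proves $\textup{lann}(\bx)\subseteq\textup{lann}(\bb)$, and combined with the hypothesis $\textup{lann}(\bb)\subseteq\textup{lann}(\bx)$ it gives $\textup{lann}(\bx)=\textup{lann}(\bb)$. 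It then remains to show that $\bx$ is an outer inverse: since $(\bf{i}-\bx\dt\ba)\dt\bb=\bb-\bx\dt\ba\dt\bb=\textbf{0}$, the element $\bf{i}-\bx\dt\ba$ lies in $\textup{lann}(\bb)\subseteq\textup{lann}(\bx)$, so $(\bf{i}-\bx\dt\ba)\dt\bx=\textbf{0}$, i.e. $\bx\dt\ba\dt\bx=\bx$.

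Each step is short and essentially forced once the auxiliary element is introduced; the only place demanding a moment's care is the reverse inclusion $\textup{lann}(\bx)\subseteq\textup{lann}(\bb)$ in the second implication, where one must right-multiply a relation from $\textup{lann}(\bx)$ by $\ba\dt\bb$ and recognize the hypothesis $\bx\dt\ba\dt\bb=\bb$ in order to collapse the expression. I expect this to be the main (though still mild) obstacle, since the other three conclusions follow by the direct annihilator computation with $\bf{i}-\bx\dt\ba$.
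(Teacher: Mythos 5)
Your proposal is correct and follows essentially the same route as the paper's proof: both directions hinge on the auxiliary element $\bf{i}-\bx\dt\ba$ and its membership in the relevant left annihilator, and the reverse inclusion $\textup{lann}(\bx)\subseteq\textup{lann}(\bb)$ in $(ii)\Rightarrow(i)$ is established by exactly the computation $\by\dt\bb=(\by\dt\bx)\dt\ba\dt\bb=\textbf{0}$ that the paper uses. The only difference is the order of the two steps in $(ii)\Rightarrow(i)$, which is immaterial.
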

\begin{proof}
$(i)\Rightarrow (ii)$ It is enough to show only $\bx\dt\ba\dt\bb=\bb$. Let $\bx\dt\ba\dt\bx=\bx$. Then $\bf{i}-\bx\dt\ba\in \textup{lann}(\bx)=\textup{lann}(\bb)$. This leads $\bb=\bx\dt\ba\dt\bb$.\\
$(ii)\Rightarrow (i)$ Let $\bb=\bx\dt\ba\dt\bb$. Then $\bf{i}-\bx\dt\ba\in\textup{lann}(\bb)\subseteq \textup{lann}(\bx)$. Thus $\bx\dt\ba\dt\bx=\bx$. Next we will claim that $\textup{lann}(\bx)\subseteq\textup{lann}(\bb)$. Let $\bv\in\textup{lann}(\bx)$. Then $\bv\dt\bx=\bf{0}$ and $\bv\dt\bb=\bv\dt\bx\dt\ba\dt\bb=\bf{0}$. Therefore, $\bv\in\textup{lann}(\bb)$ and hence $\textup{lann}(\bx)\subseteq\textup{lann}(\bb)$.
\end{proof}
The next result for right annihilator $\bc$-inverse can be proved in a similar way.
\begin{lemma}\label{lem2.19}
Let $\ba, \bc, \bx\in \rn$. Then the following  are equivalent:
\begin{enumerate}
 \setlength{\parskip}{0pt}
    \item[(i)] $\bx\dt\ba\dt\bx=\bx,~\textup{rann}(\bx)=\textup{lann}(\bc)$.
    \item[(ii)] $\bc\dt\ba\dt\bx=\bc,~\textup{rann}(\bc)\subseteq\textup{rann}(\bx)$.
\end{enumerate}
\end{lemma}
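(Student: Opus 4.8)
The plan is to transcribe the proof of Lemma~\ref{lem2.18} to the right-handed setting, interchanging the roles of left and right annihilators and of left and right multiplication. Where the left-handed argument hinges on the element $\bf{i}-\bx\dt\ba$ lying in $\textup{lann}(\bx)$, the right-handed argument will hinge on $\bf{i}-\ba\dt\bx$ lying in $\textup{rann}(\bx)$. Concretely, the identity $\bx\dt\ba\dt\bx=\bx$ is equivalent to $\bx\dt(\bf{i}-\ba\dt\bx)=\bf{0}$, i.e. to $\bf{i}-\ba\dt\bx\in\textup{rann}(\bx)$, and this single fact drives both implications. (Since $\rn$ is commutative, $\textup{lann}$ and $\textup{rann}$ of any element agree, so the reading $\textup{rann}(\bx)=\textup{lann}(\bc)$ in~(i) coincides with $\textup{rann}(\bx)=\textup{rann}(\bc)$, matching the definition of a right annihilator $\bc$-inverse.)

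For $(i)\Rightarrow(ii)$, I would start from $\bx\dt\ba\dt\bx=\bx$ to place $\bf{i}-\ba\dt\bx\in\textup{rann}(\bx)=\textup{rann}(\bc)$, then right-multiply $\bc$ by this element: $\bc\dt(\bf{i}-\ba\dt\bx)=\bf{0}$ gives exactly $\bc\dt\ba\dt\bx=\bc$. The inclusion $\textup{rann}(\bc)\subseteq\textup{rann}(\bx)$ is immediate from the assumed equality of the two annihilators.

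For $(ii)\Rightarrow(i)$, I would reverse the first computation: from $\bc\dt\ba\dt\bx=\bc$ we obtain $\bc\dt(\bf{i}-\ba\dt\bx)=\bf{0}$, so $\bf{i}-\ba\dt\bx\in\textup{rann}(\bc)\subseteq\textup{rann}(\bx)$, and then $\bx\dt(\bf{i}-\ba\dt\bx)=\bf{0}$ yields the outer-inverse equation $\bx\dt\ba\dt\bx=\bx$. It then remains to promote the hypothesized one-sided inclusion to an equality $\textup{rann}(\bx)=\textup{rann}(\bc)$. For the missing inclusion $\textup{rann}(\bx)\subseteq\textup{rann}(\bc)$, I would take $\bv\in\textup{rann}(\bx)$, so $\bx\dt\bv=\bf{0}$, and compute $\bc\dt\bv=(\bc\dt\ba\dt\bx)\dt\bv=\bc\dt\ba\dt(\bx\dt\bv)=\bf{0}$, whence $\bv\in\textup{rann}(\bc)$.

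I do not expect a genuine obstacle here, since the argument is a faithful right-handed mirror of Lemma~\ref{lem2.18}; the only step needing attention is the last one, because $\textup{rann}(\bx)\subseteq\textup{rann}(\bc)$ is the one inclusion not supplied by hypothesis, and it is obtained precisely by feeding the recovered equation $\bc=\bc\dt\ba\dt\bx$ a vector annihilated by $\bx$.
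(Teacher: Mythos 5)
Your proof is correct and is exactly what the paper intends: the paper omits the proof of Lemma~\ref{lem2.19}, stating only that it ``can be proved in a similar way'' to Lemma~\ref{lem2.18}, and your argument is the faithful right-handed transcription of that proof, including the key closing step that recovers $\textup{rann}(\bx)\subseteq\textup{rann}(\bc)$ from $\bc=\bc\dt\ba\dt\bx$. Your remark that the ``$\textup{lann}(\bc)$'' appearing in statement~(i) is harmless (a typo, since $\rn$ is commutative so left and right annihilators coincide) is also a correct and worthwhile observation.
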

Since $\rn$ is a associative ring, we define the annihilator $(\bb,\bc)$-inverse as follows. 
\begin{definition}
Let $\ba, \bb, \bc, \bx\in \rn$. An element $\bx\in\rn$ is called an annihilator $(\bb,\bc)$-inverse of $\ba$ if it satisfies 
\begin{center}
   $\bx\dt\ba\dt\bx=\bx,~\textup{lann}(\bx)=\textup{lann}(\bb),~\textup{rann}(\bx)=\textup{rann}(\bc)$.
\end{center}
\end{definition}
We now give an example of the annihilator $(\bb,\bc)$-inverse.
\begin{example}\rm
Let $\ba=\left(\frac{2}{3}, \frac{1}{3}, 0 \right)^T\in \mathbb{R}^3$ and $\bb=\bc=(1, -1, 0)^T\in\mathbb{R}^3$. Consider $\bx=(1, -1, 0)^T\in\mathbb{R}^3$. Then we can verify that  
\begin{center}
    $\bx\dt\ba= \text{circ}(\bx)\ba =\begin{bmatrix}
       1 & 0 & -1\\
       -1 & 1 & 0\\
       0 & -1 & 1
    \end{bmatrix} \begin{bmatrix}
        \frac{2}{3} \\
         \frac{1}{3}\\
         0
    \end{bmatrix}=\begin{bmatrix}
        \frac{2}{3} \\
         -\frac{1}{3}\\
         -\frac{1}{3}
   \end{bmatrix}$,  and 
    $\bx\dt\ba\dt\bx= \text{circ}(\bx\dt\ba)\bx=\begin{bmatrix}
       \frac{2}{3} & -\frac{1}{3} & -\frac{1}{3}\\
       -\frac{1}{3} & \frac{2}{3} & -\frac{1}{3}\\
       -\frac{1}{3} & -\frac{1}{3} & \frac{2}{3}
    \end{bmatrix} \begin{bmatrix}
        1 \\
         -1\\
         0\end{bmatrix}=\begin{bmatrix}
        1 \\
         -1\\
         0\end{bmatrix}=\bx$.
\end{center}
Hence $\bx$ is the annihilator $(\bb,\bc)$-inverse of $\ba$.
\end{example}

The uniqueness of annihilator $(\bb,\bc)$-inverse easily follows from left or right annihilator inverses. Using Lemma \ref{lem2.18} and Lemma \ref{lem2.19}, we state the next result concerning for annihilator $(\bb,\bc)$-inverse.

\begin{theorem}
Let $\ba, \bb, \bc, \bx\in \rn$. Then the following  statements are  are equivalent:
\begin{enumerate}
 \setlength{\parskip}{0pt}
    \item[(i)] $\bx\dt\ba\dt\bx=\bx,~\textup{lann}(\bx)=\textup{lann}(\bb),~\textup{rann}(\bx)=\textup{lann}(\bc)$.
    \item[(ii)] $\bx\dt\ba\dt\bb=\bb,~\bc\dt\ba\dt\bx=\bc,~\textup{lann}(\bb)\subseteq\textup{lann}(\bx),~\textup{rann}(\bc)\subseteq\textup{rann}(\bx)$.
\end{enumerate}
\end{theorem}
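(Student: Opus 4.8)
The plan is to reduce this equivalence to the two characterizations already established in Lemma \ref{lem2.18} (for the left annihilator $\bb$-inverse) and Lemma \ref{lem2.19} (for the right annihilator $\bc$-inverse). The guiding observation is that the defining conditions of an annihilator $(\bb,\bc)$-inverse split cleanly into a ``left half'' and a ``right half'' that match the hypotheses and conclusions of these two lemmas exactly, with the idempotency relation $\bx\dt\ba\dt\bx=\bx$ shared between them. So the whole argument is a matter of peeling each composite condition apart and feeding the pieces into the appropriate lemma.

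For the direction $(i)\Rightarrow(ii)$, I would first isolate from $(i)$ the pair $\bx\dt\ba\dt\bx=\bx$ and $\textup{lann}(\bx)=\textup{lann}(\bb)$, which is precisely statement $(i)$ of Lemma \ref{lem2.18}; the lemma then yields $\bx\dt\ba\dt\bb=\bb$ together with $\textup{lann}(\bb)\subseteq\textup{lann}(\bx)$. In parallel, the pair $\bx\dt\ba\dt\bx=\bx$ and $\textup{rann}(\bx)=\textup{rann}(\bc)$ is statement $(i)$ of Lemma \ref{lem2.19}, whose conclusion gives $\bc\dt\ba\dt\bx=\bc$ and $\textup{rann}(\bc)\subseteq\textup{rann}(\bx)$. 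Conjoining these four relations is exactly statement $(ii)$.

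For the converse $(ii)\Rightarrow(i)$, I would again separate the left and right halves. The pair $\bx\dt\ba\dt\bb=\bb$ and $\textup{lann}(\bb)\subseteq\textup{lann}(\bx)$ is statement $(ii)$ of Lemma \ref{lem2.18}, so the lemma returns $\bx\dt\ba\dt\bx=\bx$ and $\textup{lann}(\bx)=\textup{lann}(\bb)$; likewise, $\bc\dt\ba\dt\bx=\bc$ and $\textup{rann}(\bc)\subseteq\textup{rann}(\bx)$ form statement $(ii)$ of Lemma \ref{lem2.19}, giving $\bx\dt\ba\dt\bx=\bx$ and $\textup{rann}(\bx)=\textup{rann}(\bc)$. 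Combining the idempotency relation with the two annihilator equalities reconstitutes statement $(i)$.

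Because both implications follow mechanically once the left/right decomposition is made, there is no genuinely hard step. The only point requiring a moment's care is verifying that the idempotency relation $\bx\dt\ba\dt\bx=\bx$ produced by each lemma is literally the same relation, so that the two halves recombine without introducing any hidden extra hypothesis; this is immediate here, since $\bx\dt\ba\dt\bx=\bx$ is already present verbatim in $(i)$ and is the common conclusion supplied by both lemmas in the direction from $(ii)$.
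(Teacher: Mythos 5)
Your proposal is correct and takes essentially the same route as the paper: the paper gives no separate proof, stating the theorem as an immediate consequence of Lemmas \ref{lem2.18} and \ref{lem2.19}, which is exactly the left/right decomposition you spell out (and the shared relation $\bx\dt\ba\dt\bx=\bx$ recombines without issue, as you note). You also correctly read the paper's misprint $\textup{rann}(\bx)=\textup{lann}(\bc)$ in statement (i) as $\textup{rann}(\bx)=\textup{rann}(\bc)$, consistent with the definition of the right annihilator $\bc$-inverse and with Lemma \ref{lem2.19}.
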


The group inverses on $\rn$ is defined as follows. 

\begin{definition}
Let $\ba\in \rn$. An element $\bx\in\rn$ is called a group inverse of $\ba$ if it satisfies 
\begin{center}
  $\ba\dt\bx\dt\ba=\ba,~\bx\dt\ba\dt\bx=\bx,~\ba\dt\bx=\bx\dt\ba$.
\end{center}
\end{definition}
Note that the condition $\ba\dt\bx=\bx\dt\ba$ is always satisfied, since $\rn$ is a commutative ring.
The group inverse of an element $\ba$ is unique if exists. We denote the group inverse of an element $\ba\in\rn$ by $\ba^{\#}$. An element $\ba$ is called group invertible if $\ba^{\#}$ exists. We next give an example of the group inverse of $\textbf{a} \in \rn$.

\begin{example}\rm
Let $\ba=(1, 0, -1 )^T\in\mathbb{R}^3$ and consider $\bx=\left(1/3, -1/3, 0\right)^T\in\mathbb{R}^3$. Then we can verify that  
\begin{center}
    $\bx\dt\ba=\ba\dt\bx= \text{circ}(\ba)\bx =\begin{bmatrix}
       1  &  -1  &    0\\
     0   &  1  &  -1\\
    -1  &   0  &   1
    \end{bmatrix} \begin{bmatrix}
        \frac{1}{3} \\
         -\frac{1}{3}\\
         0
    \end{bmatrix}=\begin{bmatrix}
        \frac{2}{3} \\
         -\frac{1}{3}\\
         -\frac{1}{3}
   \end{bmatrix}$,  
\end{center}
\begin{center}
    $\bx\dt\ba\dt\bx= \text{circ}(\bx\dt\ba)\bx=\begin{bmatrix}
       \frac{2}{3} & -\frac{1}{3} & -\frac{1}{3}\\
       -\frac{1}{3} & \frac{2}{3} & -\frac{1}{3}\\
       -\frac{1}{3} & -\frac{1}{3} & \frac{2}{3}
    \end{bmatrix} \begin{bmatrix}
        \frac{1}{3} \\
         -\frac{1}{3}\\
         0\end{bmatrix}=\begin{bmatrix}
        \frac{1}{3} \\
         -\frac{1}{3}\\
         0\end{bmatrix}=\bx$, and 
     $\ba\dt\bx\dt\ba= \text{circ}(\bx\dt\ba)\bx=\begin{bmatrix}
       \frac{2}{3} & -\frac{1}{3} & -\frac{1}{3}\\
       -\frac{1}{3} & \frac{2}{3} & -\frac{1}{3}\\
       -\frac{1}{3} & -\frac{1}{3} & \frac{2}{3}
    \end{bmatrix} \begin{bmatrix}
        1\\
         0\\
         -1\end{bmatrix}=\begin{bmatrix}
        1 \\
        0\\
         -1\end{bmatrix}=\ba$.
\end{center}
Hence $\bx$ is the group inverse of $\ba$.
\end{example}

The existence of group inverse is discussed in the theorem below: 

\begin{theorem}
Let $\ba\in \rn$. Then $\ba$ is group invertible if and only if $\ba\in\ba^2\dt\rn$. Moreover, if $\ba=\bx\dt\ba^2=\ba^2\dt\by$, for some $\bx, \by\in\rn$, then $\ba^{\#}=\bx\dt\ba\dt\by=\bx^2\dt\ba=\ba\dt\by^2$.
\end{theorem}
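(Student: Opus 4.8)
The plan is to lean on the commutativity of $(\rn, +, \dt)$ throughout, which lets me freely reorder products and, crucially, makes the two ideals $\ba^2\dt\rn$ and $\rn\dt\ba^2$ coincide. I would first dispose of the easy necessity direction: if $\ba$ is group invertible with group inverse $\bx=\ba^{\#}$, then the defining equation $\ba\dt\bx\dt\ba=\ba$ can be rewritten, using commutativity, as $\ba^2\dt\bx=\ba$, so that $\ba=\ba^2\dt\bx\in\ba^2\dt\rn$. This costs no more than a single rearrangement.

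For sufficiency, suppose $\ba\in\ba^2\dt\rn$, say $\ba=\ba^2\dt\bz$ for some $\bz\in\rn$. Commutativity immediately supplies the companion factorization $\ba=\bz\dt\ba^2$, so I am exactly in the setting of the ``moreover'' clause with $\bx=\by=\bz$. Hence it suffices to prove the moreover statement: given $\ba=\bx\dt\ba^2=\ba^2\dt\by$, the element $\bf{g}:=\bx\dt\ba\dt\by$ is the group inverse, and equals $\bx^2\dt\ba$ and $\ba\dt\by^2$. I would first show these three forms coincide. Substituting the factorization $\ba=\bx\dt\ba^2$ into $\ba\dt\by^2$ and then collapsing $\ba^2\dt\by=\ba$ yields $\ba\dt\by^2=\bx\dt\ba\dt\by$; symmetrically, substituting $\ba=\ba^2\dt\by$ into $\bx^2\dt\ba$ and then collapsing $\bx\dt\ba^2=\ba$ gives $\bx^2\dt\ba=\bx\dt\ba\dt\by$. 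This pins down $\bf{g}$ in a convenient symmetric form.

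The verification of the group-inverse axioms is then the core computation. The key intermediate identity I would extract is $\ba\dt\bf{g}=\bx\dt\ba$: starting from $\bf{g}=\bx^2\dt\ba$, I compute $\ba\dt\bf{g}=\bx^2\dt\ba^2=\bx\dt(\bx\dt\ba^2)=\bx\dt\ba$. With this in hand the first axiom is immediate, $\ba\dt\bf{g}\dt\ba=\bx\dt\ba\dt\ba=\bx\dt\ba^2=\ba$; and since $\bf{g}\dt\ba=\ba\dt\bf{g}=\bx\dt\ba$ by commutativity, the second axiom follows as $\bf{g}\dt\ba\dt\bf{g}=\bx\dt\ba\dt\bf{g}=\bx\dt(\ba\dt\bf{g})=\bx\dt\bx\dt\ba=\bx^2\dt\ba=\bf{g}$. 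The third axiom $\ba\dt\bf{g}=\bf{g}\dt\ba$ is automatic in a commutative ring, as already noted after the definition of the group inverse. Uniqueness of the group inverse (recorded earlier in this section) then lets me conclude $\ba^{\#}=\bf{g}$, which simultaneously yields the sufficiency of $\ba\in\ba^2\dt\rn$ and the three stated formulas.

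I do not anticipate a genuine obstacle: every step is a short rearrangement enabled by commutativity together with the two reduction rules $\bx\dt\ba^2=\ba$ and $\ba^2\dt\by=\ba$. The only point demanding care is bookkeeping, namely ensuring each substitution replaces the intended copy of $\ba$ or $\ba^2$. The cleanest route, and the one I would write up, is to isolate the identity $\ba\dt\bf{g}=\bf{g}\dt\ba=\bx\dt\ba$ at the outset, since it makes both nontrivial axioms fall out in a single line each.
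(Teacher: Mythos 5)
Your proposal is correct and follows essentially the same route as the paper's proof: necessity by rewriting $\ba\dt\ba^{\#}\dt\ba$ as $\ba^2\dt\ba^{\#}$ via commutativity, and sufficiency by showing $\bx\dt\ba\dt\by=\bx^2\dt\ba=\ba\dt\by^2$ and then directly verifying the group-inverse axioms with the reduction rules $\bx\dt\ba^2=\ba$ and $\ba^2\dt\by=\ba$. Your isolation of the identity $\ba\dt{\bf{g}}=\bx\dt\ba$ and your explicit remark that commutativity turns one factorization $\ba=\ba^2\dt\bz$ into both hypotheses of the ``moreover'' clause are minor organizational refinements of the same argument, not a different method.
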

\begin{proof}
Let $\ba$ be group invertible. Then $\ba=\ba\dt\bx\dt\ba=\ba^2\dt\bx=\bx\dt\ba^2\in\ba^2\dt\rn$. Conversely, let $\ba=\bx\dt\ba^2=\ba^2\dt\by$, for some $\bx, \by\in\rn$. Then $\bx\dt\ba\dt\by=\bx\dt\ba^2\dt\by\dt\by=\ba\dt\by^2$ and $\bx\dt\ba\dt\by=\ba\dt\bx\dt\ba^2\dt\by=\bx^2\dt\ba$. Next we shall prove $\bx\dt\ba\dt\by$ is the group inverse of $\ba$. Now $\ba\dt(\bx\dt\ba\dt\by)\dt\ba=\bx\dt\ba^2\dt\ba\dt\by=\ba^2\dt\by=\ba$, $\bx\dt\ba\dt\by\dt(\ba)\dt\bx\dt\ba\dt\by=\bx\dt\ba^3\dt\by\dt\bx\dt\by=\ba^2\dt\by\dt\bx\dt\by=\ba\dt\bx\dt\by=\bx\dt\ba\dt\by$, and the commutative property holds trivially. Thus $\ba^{\#}=\bx\dt\ba\dt\by$.
\end{proof}
\begin{remark}
 If $\rn$ is any commutative ring with unity, then the group inverse of an element $\ba\in \rn$ exists if and only if $\ba\in\ba^2 \rn$.
\end{remark}

\section{Tensor over a non-commutative ring}
\subsection{Notation and Definitions}
Let $ \mathbb{R}^{n_1 \times n_2 \times n_3 \cdots \times n_p}$  be the set of order $p$ and dimension $n_1 \times \cdots \times n_p$ tensors over the real
field $\mathbb{R}$.  Let $ \mathcal{A} = (a_{i_1 ...i_p}) \in \mathbb{R}^{n_1 \times \cdots \times n_p}, p>1$. For $ i = 1,...,n_p $, denote by $ \mc{A}_i \in \mathbb{R}^{n_1 \times \cdots \times n_{p-1}} $, the tensor whose order is $(p - 1)$ and is created by holding the $p$th index of $\mc{A}$ fixed at $i$, which we called the frontal slices of the tensor $\mc{A}$. The generalization of matrix rows and columns are called fibers. Specifically, fixing all the indexes of a tensor $\mc{A}$ except one index. Now, define $ \text{unfold}(.)$ to take an $n_1 \times n_2 \times \cdots \times n_p $ tensor \cite{Braman10} and return an $ n_1 n_p \times n_2 \times \cdots \times n_{p-1} $ block tensor in the following way:\\
\begin{equation*}
\text{unfold}( \mc{A} ) =
\begin{bmatrix}
\mc{A}_1\\
\mc{A}_2\\
\vdots\\
\mc{A}_{n_p}
\end{bmatrix},
\end{equation*}
and $ \text{fold}()$ is the inverse operation \cite{Braman10}, which takes an 
$n_1 n_p \times n_2 \times \cdots \times n_{p-1} $ block tensor and returns an $n_1 \times n_2 \times \cdots \times n_p $ tensor. Then, $\text{fold}( \text{unfold}(\mc{A})) = \mc{A}.$ Now, one can easily see  that, 
\begin{equation*}
 \text{circ}( \text{unfold}(\mc{A} )) = 
   \begin{bmatrix}
\mc{A}_1 & \mc{A}_{n_p} & \cdots & \mc{A}_2\\
\mc{A}_2 & \mc{A}_1    &  \cdots & \mc{A}_3\\
\vdots  &  \vdots  & \vdots  &    \vdots\\
\mc{A}_{n_p}& \mc{A}_{n_{p-1}}& \cdots & \mc{A}_1
 \end{bmatrix}. 
\end{equation*}

\begin{definition}\label{TransDefn} {\cite{Martin13}}
Let $\mc{A} \in \mathbb{R}^{n\times n \times n_3\times \cdots\times n_p}.$ The transpose of  $\mc{A}$ is denoted by $\mc{A}^T \in \mathbb{R}^{n\times n \times n_3\times \cdots\times n_p}$ is defined by the tensor transposing each $\mc{A}_i$, for $i = 1, 2, \cdots, n$ along with reversing the order of the $\mc{A}_i$ from $2$ through $n_p$.
\end{definition}

We collect some useful definitions from \cite{Martin13} as follows. A tensor $\mc{D} \in \mathbb{R}^{n\times n \times n_3\times \cdots\times n_p} $ is called a $f$-{\it diagonal
   tensor} if each frontal slice is diagonal. Similarly, a tensor is a $f$-upper triangular or $f$-lower triangular if each frontal slice is upper or lower triangular, respectively. Likewise, a tensor is called the identity tensor if each frontal slice is the identity matrix and all other frontal slices are zeros. The tensor whose entries are all zero is denoted by $\mc{O}$.

Now we construct a ring over the tensor space $\mathbb{R}^{n\times n \times n_3\times \cdots\times n_p}$.
\begin{theorem}
 Let $ \mathbb{R}^{n \times n \times n_3 \cdots \times n_p}$  be  a $p$-th order tensor over a field $\mathbb{R}$ with binary operations ($+,*$) (for addition and  multiplication), 
 defined as 
\begin{equation*}
\mc{A} * \mc{B} = \text{fold}( \text{circ}( \text{unfold}(\mc{A})) *  \text{unfold}(\mc{B})), \mbox{ and }
\mc{A} + \mc{B} =\mc{A}_{i_1...i_p} + \mc{B}_{i_1...i_p}, ~~\forall \mc{A}, \mc{B} \in \mathbb{R}^{n\times n \times n_3\times \cdots\times n_p}.
\end{equation*}
Then $\mathbb{R}^{n\times n \times n_3\times \cdots\times n_p}$ is a ring with unity.
\end{theorem}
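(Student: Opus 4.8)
The plan is to verify the ring-with-unity axioms — the abelian group structure under $+$, associativity of $*$, the two distributive laws, and existence of a two-sided multiplicative identity — but to obtain the multiplicative axioms by transferring them from a well-understood matrix ring through the circulant embedding, rather than by grinding through tensor computations directly. First I would dispose of the additive structure: since $\mc{A}+\mc{B}$ is defined entrywise and every entry lies in the field $\mathbb{R}$, the pair $(\mathbb{R}^{n\times n \times n_3\times \cdots\times n_p},+)$ inherits commutativity, associativity, the zero tensor $\mc{O}$ as identity, and entrywise negation as inverse directly from $(\mathbb{R},+)$. This part is routine.

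The heart of the argument is the map $\Phi(\mc{A})=\text{circ}(\unfold(\mc{A}))$ sending each tensor to its block-circulant matrix. I would establish three properties of $\Phi$: (i) it is injective, since $\mc{A}$ is recovered as $\fold$ applied to the first block column of $\Phi(\mc{A})$; (ii) it is additive, $\Phi(\mc{A}+\mc{B})=\Phi(\mc{A})+\Phi(\mc{B})$, which is immediate from the linearity of $\text{circ}$ and $\unfold$; and (iii) the key multiplicative identity $\Phi(\mc{A}*\mc{B})=\Phi(\mc{A})\,\Phi(\mc{B})$, where the right-hand side is ordinary block-matrix multiplication. The image of $\Phi$ is precisely the set of block-circulant matrices, which is closed under addition and multiplication and contains the block identity $\Phi(\mc{I})$, where $\mc{I}$ is the identity tensor whose first frontal slice is the identity and whose remaining slices vanish.

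Granting (i)–(iii), the conclusion follows cleanly. Because block-circulant matrices form a subring with unity of the full matrix ring over $\mathbb{R}$, and $\Phi$ is an additive bijection onto this subring that intertwines $*$ with matrix multiplication, associativity of $*$, both distributive laws, and the identity law $\mc{I}*\mc{A}=\mc{A}*\mc{I}=\mc{A}$ all pull back from the corresponding matrix identities via $\Phi^{-1}$. Thus no separate verification of these axioms on the tensor side is needed: they are consequences of the analogous, standard facts for matrices together with the homomorphism property (iii).

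The main obstacle is property (iii). For $p=3$ it reduces to the classical fact that the product of two block-circulant matrices is again block-circulant, with first block column equal to the $t$-product $\mc{A}*\mc{B}$; I would verify this by comparing block entries and exploiting the cyclic index pattern of $\text{circ}$. For general $p$ the definition of $*$ is recursive — the entries of the blocks are themselves order-$(p-1)$ tensors multiplied by the order-$(p-1)$ product — so I would prove (iii) by induction on the order $p$, taking the circulant ring of Lemma~1 (and the ordinary matrix ring) as the base case and using the inductive hypothesis to supply the associativity and distributivity of the block-entry multiplication that the block-circulant computation requires. Closing this recursion carefully, rather than the formal ring-axiom bookkeeping, is where the real work lies.
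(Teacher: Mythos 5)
Your proposal is correct, but it takes a genuinely different route from the paper. The paper's own proof works directly on the tensor side: it declares the additive group structure straightforward, and then verifies the two distributive laws by expanding the definition of $*$ and using the linearity of $\text{unfold}$, $\text{circ}$, and $\text{fold}$; notably, it never actually verifies associativity of $*$ or the identity law --- it appeals to ``the definition of $*$ and its associative properties,'' which presupposes exactly what must be shown. Your argument instead concentrates everything in the embedding $\Phi(\mc{A})=\text{circ}(\text{unfold}(\mc{A}))$: once you know $\Phi$ is an additive bijection onto the block-circulant matrices satisfying $\Phi(\mc{A}*\mc{B})=\Phi(\mc{A})\,\Phi(\mc{B})$, associativity, distributivity, and the two-sided unity all pull back from the matrix ring at no extra cost. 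That is precisely the material the paper leaves unproved, so your route is both more complete and more structural; its price is the inductive verification of the homomorphism property for general order $p$, which you correctly identify as the real work, since for $p>3$ the blocks are order-$(p-1)$ tensors and the block-circulant closure argument needs the inductive hypothesis that those already form a ring (the closure computation itself, $\bigl(C_{\mc{A}}C_{\mc{B}}\bigr)_{ij}=\sum_{l}\mc{A}_{i-l+1}*\mc{B}_{l-j+1}$ depending only on $i-j$ modulo the block period, is valid over any ring, commutative or not). As a side benefit, your viewpoint explains why the FFT-based block diagonalization the paper uses in its algorithms behaves well under products, which the paper's direct verification does not illuminate.
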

\begin{proof}
It is easy to show that $(\mathbb{R}^{n\times n \times n_3\times \cdots\times n_p}, +)$ is an group and it is straightforward. Using the definition of $*$ and its associative properties, we can show that: 
\begin{eqnarray*}
\mc{A}*(\mc{B}+ \mc{C}) &=& \text{fold}( \text{circ}( \text{unfold}(\mc{A})) *  \text{unfold}(\mc{B}+\mc{C}))\\
&=& \text{fold}( \text{circ}( \text{unfold}(\mc{A})) * [ \text{unfold}(\mc{B})+ \text{unfold}(\mc{C})])\\
&=& \text{fold}( \text{circ}( \text{unfold}(\mc{A})) * \text{unfold}(\mc{B}))+ \text{fold}( \text{circ}( \text{unfold}(\mc{A})) * \text{unfold}(\mc{C}))\\
&=&\mc{A}*\mc{B}+ \mc{A}*\mc{C}.
\end{eqnarray*}
Similarly, we can prove that, $(\mc{B}+\mc{C})*\mc{A} = \mc{B}*\mc{A}+\mc{C}*\mc{A}$.
\end{proof}

\begin{remark}
One can easily see that $\left(\mathbb{R}^{n\times n \times n_3\times \cdots\times n_p}, *, +\right)$ is a non-commutative ring. For simplicity, we use the notation $\mathbb{R}^{n\times n \times n_3\times \cdots\times n_p}$ for $\left(\mathbb{R}^{n\times n \times n_3\times \cdots\times n_p}, *, +\right)$  to facilitate the presentation.
\end{remark}

We next give an example of the non-commutative ring of tensors. 

\begin{example}
Let $ \mathcal{A}, \mathcal{B} \in \mathbb{R}^{3 \times 3 \times 2}$  
 with frontal slices 
\begin{eqnarray*}
\mc{A}_{(1)} =
    \begin{pmatrix}
    1 & 2 & 3\\
    0 & 0 & 0\\
     0 & 0 & 0
    \end{pmatrix},~
\mc{A}_{(2)} =
    \begin{pmatrix}
     1 & 2 & 1\\
     0 & 2 & 0\\
      3 & 1 & 0
    \end{pmatrix},~~
\mc{B}_{(1)} =
    \begin{pmatrix}
    1 & 1 & 0\\
    0 & 3 & 0\\
     0 & 1 & 0
    \end{pmatrix},~
\mc{B}_{(2)} =
    \begin{pmatrix}
     0 & 1 & 0\\
     0 & 5 & 0\\
     1 & 1 & 0
    \end{pmatrix}.
    \end{eqnarray*}
Then we can see that 
\begin{eqnarray*}
fold\left( 
\begin{bmatrix}
     2 & 22 & 0\\
     0 & 10 & 0\\
     0 & 8 & 0\\\hline 
     4 & 22 & 0\\
     0 & 6 & 0\\
     3 & 6 & 0
\end{bmatrix}
\right)
= \mc{A}*\mc{B}\neq  \mc{B}*\mc{A}= 
fold\left( 
\begin{bmatrix}
     1 & 4 & 3\\
     0 & 10 & 0\\
     1 & 4 & 1\\\hline 
     1 & 4 & 1\\
     0 & 6 & 0\\
     1 & 4 & 3
\end{bmatrix}
\right).
\end{eqnarray*}
\end{example}

\subsection{Computation of Generalized Inverses}
We now introduce the definition of $\{i\}$-inverses $(i = 1, 2, 3, 4)$ and the Moore-Penrose inverse of tensors ($t$-product) over $\rt$.
 \begin{definition}\label{defgi}
For any tensor $\mc{A} \in \rt,$  consider the following equations in $\mc{Z} \in \rt:$
\begin{eqnarray*}
&&(1)~\mc{A}*\mc{Z}*\mc{A} = \mc{A}, \hspace{1cm}
(3)~(\mc{A}*\mc{Z})^T = \mc{A}*\mc{Z},\\
&&(2)~\mc{Z}*\mc{A}*\mc{Z} = \mc{Z},\hspace{1cm}
(4)~(\mc{Z}*\mc{A})^T = \mc{Z}*\mc{A}.
\end{eqnarray*}
Then $\mc{Z}$ is called
\begin{enumerate}
 \setlength{\parskip}{0pt}
\item[(a)] 
an inner inverse  of $\mc{A}$ if it satisfies $(1)$ and is denoted by $\mc{A}^{(1)}$;
\item[(b)]  an outer inverse of $\mc{A}$ if it satisfies $(1)$ and $(2)$, which is denoted by $\mc{A}^{(1,2)}$;
\item[(c)] a $\{1,3\}$ inverse of $\mc{A}$ if it satisfies $(1)$ and $(3)$, which is denoted by $\mc{A}^{(1,3)}$;
\item[(d)]  a $\{1,4\}$ inverse of $\mc{A}$ if it satisfies $(1)$ and $(4)$, which is  denoted by $\mc{A}^{(1,4)}$;
\item[(e)] the Moore-Penrose inverse of $\mc{A}$ if it satisfies all four conditions $[(1)-(4)]$, which is denoted by $\mc{A}^{\dagger}.$
\end{enumerate}
\end{definition}

It is worth mentioning that the discrete Fourier Transform plays significantly role for the product of two tensors. For instance, if $\textbf{a} = (a_1, a_2, \cdots, a_n)^T$ is $n \times 1$ vector, then $F_n circ(\textbf{a})F_n^*$ is diagonal, where $F_n$ is the $n \times n$ discrete Fourier transform (DFT) matrix, and $F_n^*$ is its conjugate transpose. To compute this diagonal, the fast Fourier transform (FFT) is used \cite{Golub96Book} as follows. 
\begin{equation*}
    F_n circ(\textbf{a})F_n^*= \textnormal{\bf fft}(\textbf{a})
\end{equation*}
\begin{algorithm}[hbt!]
  \caption{Computation of transpose of a tensor $\mc{A}$} \label{AlgoTran}
  \begin{algorithmic}[1]
    \Procedure{TRANSPOSE}{$\mc{A}$}
    \State \textbf{Input} $p$, $n,$ $n_3,\ldots,n_p$ and the tensor $\mathcal{A}\in \rt$.
      \For{$i \gets 1$ to $p$} 
      \State $\mc{A}=\textup{fft}(\mc{A}, [~ ], i);$
      \EndFor
      \State $N=n_3n_4\cdots n_p$
      \For{$i \gets 1$ to $N$} 
       \State $\mc{Z}(:, :, i ) = \textup{transpose}(\mc{A}(:, :, i ));$
      \EndFor
      \For{$i \gets p$ to $1$} 
      \State $\mc{B} \leftarrow \textup{ifft}(\mc{Z}, [~ ], i);$
      \EndFor
      \State \textbf{return} $\mc{B}$\Comment{$\mc{B}$ is the transpose of $\mc{A}$}
    \EndProcedure
  \end{algorithmic}
\end{algorithm}

Further, the authors of \cite{Kilmer13, kilmer11, Martin13, BramanK07} utilized it for tensors. Suppose  $\mc{A} \in \mathbb{R}^{n\times n \times n_3}$, then $\text{circ}(\text{unfold}(\mc{A}))$ is a block circulant matrix with each  $ A_i \in \mathbb{R}^{n \times n},$ for $1\leq i \leq n_3$.  Then 
\begin{equation*}
    (F_{n_3} \otimes I_{n})\cdot 
    \begin{pmatrix}
     {A}_1 & {A}_{n_3} & \cdots & {A}_2\\
{A}_2 & {A}_1    &  \cdots & \mc{A}_3\\
\vdots  &  \vdots  & \ddots  &    \vdots\\
A_{n_3}& {A}_{n_3-1}& \cdots & {A}_1
    \end{pmatrix} \cdot (F_{n_3}^* \otimes I_{n})=
    \begin{pmatrix}
     {D}_1 &  &  & \\
     & {D}_2 &  & \\
     & &  \ddots &\\ 
    & &  & {D}_{n_3}
    \end{pmatrix},
\end{equation*}
where $F_{n_3}$ is the $n_3\times n_3$ DFT matrix, $F_{n_3}^*$ is the conjugate transpose of $F_{n_3}$, and `$\cdot$' is the standard matrix multiplication. Here $ D_i \in \mathbb{R}^{n \times n},$ for $1\leq i \leq n_3$.  Similarly for $\mc{A} \in \rt,$ we can write
\begin{equation}\label{los}
    (F_{n_p} \otimes F_{n_{p-1}} \otimes \dots F_{n_3} \otimes I_{n})\cdot \text{circ}(\text{unfold}(\mc{A}) \cdot (F^*_{n_p} \otimes F^*_{n_{p-1}} \otimes \dots F^*_{n_3} \otimes I_{n})= \begin{pmatrix}
     {\Sigma}_1 &  &  & \\
     & {\Sigma}_2 &  & \\
     & &  \ddots &\\ 
    & &  & {\Sigma}_{\rho}
    \end{pmatrix}
\end{equation}
is a block diagonal matrix with $\rho$ blocks each of size $n\times n$, where $\rho=n_3n_4\cdots n_p$. Hence computation of tensors via the Fourier domain are obtained by systematically reorganizing the tensor into a matrix. Then, the benefits of the matrix computation results will be utilized for tensor computation. So once the matrix computation is performed on the Fourier domain we apply $(F^*_{n_p} \otimes F^*_{n_{p-1}} \otimes \dots F^*_{n_3} \otimes I_{n})$ to the left and $(F_{n_p} \otimes F_{n_{p-1}} \otimes \dots F_{n_3} \otimes I_{n})$ 
 to the right of each of the block diagonal matrices in Eq. \eqref{los}.  Folding up those
results takes us back into the appropriate sized tensor results. In view of this representation, we next present the definition of the symmetric positive definite tensor which was introduced earlier in \cite{Kilmer13}. A tensor $\mc{A} \in \rt$ is called symmetric positive definite tensor if all the $\Sigma_i$ for $i=1,2, \cdots \rho$, are hermitian positive definite.

The fast Fourier transform is utilized in Algorithm-\ref{AlgoTran} and Algorithm-\ref{AlgoGen} for computing transpose of a tensor and inner inverses of tensors. Following the Definition \ref{TransDefn}, the ``transpose'' function is used in Algorithm-\ref{AlgoTran} on line-8 to compute the transpose of matrices in the Fourier domain. Similarly, the functions (i.e., rank, rref) are used in Algorithm-\ref{AlgoGen} on line-8 and line-9 to compute rank and reduced row-echelon form of matrices in the Fourier domain, respectively.  Here, our purpose is not to compare our tensor-based approach to other methods, but rather to contribute to the class of algorithms used for this purpose.

\begin{definition}
The left and right ideals generated by $\mc{A} \in \rt$ are respectively defined by
\begin{equation*}
   \rt*\mc{A}=\{ \mc{Z}*\mc{A} ~:\mc{Z}\in \rt \} ~\mbox{ and ~} \mc{A}*\rt=\{ \mc{A}* \mc{Z} ~:\mc{Z}\in \rt \}.
\end{equation*}
\end{definition}
\begin{definition}
The right annihilator of $\mc{A} \in \rt$ denoted by $\textup{rann}(\mc{A})$ and 
left annihilator of $\mc{A}$ denoted by $\textup{lann}(\mc{A})$ are defined respectively by
\begin{equation*}
     \textup{rann}(\mc{A})=\{\mc{X}\in \rt:~\mc{A} * \mc{X}=\mc{O} \}~~\textnormal{and}~~ \textup{lann}(\mc{A})=\{\mc{X}\in \rt:~ \mc{X} *\mc{A}=\mc{O}  \}.
\end{equation*}
\end{definition}

\begin{algorithm}[hbt!]
  \caption{Computation of inner inverse of a tensor $\mc{A}$} \label{AlgoGen}
  \begin{algorithmic}[1]
    \Procedure{IINV}{$\mc{A}$}
    \State \textbf{Input} $p$, $n,$ $n_3,\ldots,n_p$ and the tensor $\mathcal{A}\in \rt$.
      \For{$i \gets 1$ to $p$} 
      \State $\mc{A}=\textup{fft}(\mc{A}, [~ ], i);$
      \EndFor
      \State $N=n_3n_4\cdots n_p$
      \For{$i \gets 1$ to $N$} 
        \State $r\leftarrow \textup{rank}({\mathcal{A}(:,:,i)}).$
		
		\State $\mc{B} \leftarrow \textup{rref}([A(:,:,i), I_{r}(:,:,i)])$.
		\State $\mc{G}\leftarrow $ last $r$ columns of $B(:,:,i)$.
		\State Find a permutation matrix $\mc{P}(:,:,i)$ such that  $\mc{G}(:,:,i)\mc{A}(:,:,i)\mc{P}(:,:,i) = \bmatrix \c{I}_r(:,:,i) & \mc{K}(:,:,i) \\ \mc{O}(:,:,i) & \mc{O}(:,:,i)\endbmatrix.$ 
		\State Define an arbitrary  $\mc{L}(:,:,i)$ such that 
		$\mc{B}(:,:,i)= \bmatrix \c{I}_r(:,:,i) & \mc{O}(:,:,i) \\ \mc{O}(:,:,i) & \mc{L}(:,:,i)\endbmatrix.$
		\State $\mathcal{Z}(:,:,i)=\mc{P}(:,:,i)\mc{B}(:,:,i)  \mc{G}(:,:,i).$
      \EndFor
      \For{$i \gets p$ to $1$} 
      \State $\mc{X}=\textup{ifft}(\mc{Z}, [~ ], i);$
      \EndFor
      \State \textbf{return} $\mc{X}$\Comment{$\mc{X}$ is the inner inverse of $\mc{A}$}
    \EndProcedure
  \end{algorithmic}
\end{algorithm}

The annihilators of $\mc{A} \in \rt$ satisfy the following properties. 
\begin{proposition} \label{pro1}
Let $\mc{A}$, $\mc{B} \in \rt$. Then  $\mc{A}*\rt\subseteq \mc{B}*\rt$ 
if and only if  $\textup{lann}(\mc{B}) \subseteq \textup{lann}(\mc{A})$.
\end{proposition}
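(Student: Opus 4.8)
The plan is to prove the two implications separately, exploiting the fact that the annihilator condition and the ideal condition are essentially dual descriptions of the same containment. Recall that $\mc{A}*\rt$ is the right ideal generated by $\mc{A}$, while $\textup{lann}(\mc{B})$ collects all tensors killing $\mc{B}$ on the left. The statement is the tensor analogue of the standard ring-theoretic fact that $aR \subseteq bR$ iff $\ell(b)\subseteq \ell(a)$, so I expect the proof to be purely formal and not to require any FFT/circulant structure.

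For the forward direction, assume $\mc{A}*\rt\subseteq \mc{B}*\rt$. Since $\mc{A}=\mc{A}*\mc{I}\in\mc{A}*\rt$ (using the identity tensor $\mc{I}$), the hypothesis gives $\mc{A}\in\mc{B}*\rt$, so $\mc{A}=\mc{B}*\mc{C}$ for some $\mc{C}\in\rt$. Now take any $\mc{X}\in\textup{lann}(\mc{B})$, so $\mc{X}*\mc{B}=\mc{O}$. Then
\begin{equation*}
\mc{X}*\mc{A}=\mc{X}*(\mc{B}*\mc{C})=(\mc{X}*\mc{B})*\mc{C}=\mc{O}*\mc{C}=\mc{O},
\end{equation*}
where I use associativity of $*$ (which follows from associativity of block-matrix multiplication through the $\fold/\bcirc/\unfold$ representation). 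Hence $\mc{X}\in\textup{lann}(\mc{A})$, establishing $\textup{lann}(\mc{B})\subseteq\textup{lann}(\mc{A})$.

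The converse is the direction where I expect the only real subtlety. The clean way is \emph{not} to argue directly but to use a single well-chosen annihilator element. Assume $\textup{lann}(\mc{B})\subseteq\textup{lann}(\mc{A})$ and consider the goal $\mc{A}\in\mc{B}*\rt$ (which immediately yields $\mc{A}*\rt\subseteq\mc{B}*\rt$ since then $\mc{A}*\mc{Z}=\mc{B}*(\mc{C}*\mc{Z})$ for all $\mc{Z}$). The natural strategy is to produce an idempotent-type witness: pass to the Fourier domain via Eq.~\eqref{los}, where $*$ becomes ordinary block-diagonal matrix multiplication and the whole containment decouples into $\rho$ independent matrix statements about the blocks $\Sigma_i$ of $\mc{A}$ and the corresponding blocks of $\mc{B}$. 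For matrices over $\mathbb{C}$ the equivalence ``$\textup{lann}(B)\subseteq\textup{lann}(A)\Rightarrow A\in B\mathbb{C}^{n\times n}$'' holds because one can take $A\,(I-B^{(1)}B)$, show it lies in $\textup{lann}(B)$ applied suitably, and conclude $A=ABCB$-type factorization; equivalently the row space of $A$ is contained in the row space of $B$. I would carry this out blockwise, then fold back.

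The main obstacle is precisely this converse: unlike the forward direction, it genuinely uses that $\rt$ is more than an abstract ring, since in a general ring $\ell(b)\subseteq\ell(a)$ need \emph{not} force $a\in bR$ without a regularity or coherence hypothesis. The safe route is therefore to transport the problem to the Fourier/block-diagonal picture of Eq.~\eqref{los}, where each block is an honest complex matrix and the row-space containment argument is valid, and to verify that the construction respects the conjugate-symmetry constraints of the DFT so that folding back produces a genuine real tensor $\mc{C}$ with $\mc{A}=\mc{B}*\mc{C}$. I would flag that this is where associativity of $*$ and the block-diagonalization of $\bcirc(\unfold(\cdot))$ are doing the essential work, and keep the matrix-level row-space computation implicit rather than grinding through the explicit reduced-row-echelon bookkeeping.
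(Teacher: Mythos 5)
Your forward implication coincides with the paper's own: pick $\mc{V}$ with $\mc{A}=\mc{B}*\mc{V}$, then $\mc{X}*\mc{A}=(\mc{X}*\mc{B})*\mc{V}=\mc{O}$ for any $\mc{X}\in\textup{lann}(\mc{B})$; that half is fine. The converse, however, contains a genuine gap, and it sits exactly at the point you flagged as the crux. Two concrete problems. First, the matrix-level witness you name is wrong: $A(I-B^{(1)}B)$ does \emph{not} lie in $\textup{lann}(B)$, since $A(I-B^{(1)}B)B=A(B-B^{(1)}BB)$ need not vanish ($BB^{(1)}B=B$ is the defining identity, not $B^{(1)}BB=B$); the element $I-B^{(1)}B$ is instead a \emph{right} annihilator of $B$, i.e.\ the witness for the dual statement about left ideals (Proposition \ref{prop2}). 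The element that does the work here is $I-BB^{(1)}$, because $(I-BB^{(1)})B=B-BB^{(1)}B=O$. Likewise, the conclusion $A\in B\,\mathbb{C}^{n\times n}$, i.e.\ $A=BC$, is a \emph{column}-space containment $\mathcal{R}(A)\subseteq\mathcal{R}(B)$, not a row-space one. Second, having misidentified the witness, you explicitly defer the actual computation (``keep the matrix-level row-space computation implicit''), along with the conjugate-symmetry bookkeeping needed to fold a blockwise complex solution back into a real tensor, so the converse is never actually established.

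The paper's proof also shows that your detour through the Fourier domain of Eq.~\eqref{los} is unnecessary once you grant what the paper has already set up: every tensor in $\rt$ admits an inner inverse $\mc{B}^{(1)}$ (Definition \ref{defgi}, computed by Algorithm \ref{AlgoGen}); that existence statement is the \emph{only} place where the FFT/block-diagonal structure, i.e.\ the regularity of $\rt$, enters. With it, the converse is three lines entirely inside the ring: $(\mc{I}-\mc{B}*\mc{B}^{(1)})*\mc{B}=\mc{O}$, so $\mc{I}-\mc{B}*\mc{B}^{(1)}\in\textup{lann}(\mc{B})\subseteq\textup{lann}(\mc{A})$, hence $\mc{A}=\mc{B}*\mc{B}^{(1)}*\mc{A}$; then any $\mc{S}=\mc{A}*\mc{U}\in\mc{A}*\rt$ equals $\mc{B}*(\mc{B}^{(1)}*\mc{A}*\mc{U})\in\mc{B}*\rt$. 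Repairing your blockwise argument would amount to re-proving the existence of $\mc{B}^{(1)}$; citing it instead collapses your converse to the paper's argument.
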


\begin{proof}
  Let  $\mc{U}\in\textup{lann}(\mc{B})$.  Then $\mc{U}*\mc{B} = 0$ for some $\mc{U} \in \rt$. From $\mc{A}*\rt \subseteq \mc{B}*\rt$, we get $\mc{A} = \mc{B}*\mc{V}$  for some $\mc{V} \in \rt$. Now  $\mc{U}*\mc{A} = \mc{U}*\mc{B}*\mc{V} = \mc{O}$. Thus $\mc{U}\in \textup{lann}(\mc{A})$ and hence $\textup{lann}(\mc{B}) \subseteq \textup{lann}(\mc{A})$.\\
Conversely from   $(\mc{I}-\mc{B}*\mc{B}^{(1)})*\mc{B} = \mc{B}-\mc{B}*\mc{B}^{(1)}*\mc{B}=\mc{O}$ and  $\textup{lann}(\mc{B}) \subseteq \textup{lann}(\mc{A})$, we have  $(\mc{I} -\mc{B}*\mc{B}^{(1)} )*\mc{A} = \mc{O}$. Thus $\mc{A}  = \mc{B}*\mc{B}^{(1)} *\mc{A}$. Now let $\mc{S}\in\mc{A}*\rt$. Then $\mc{S}=\mc{A}*\mc{U}=\mc{B}*\mc{B}^{(1)} *\mc{A}*\mc{U}=\mc{B}*\mc{T}$ for some $\mc{T}=\mc{B}^{(1)} *\mc{A}*\mc{U}\in\rt$. Hence $\mc{A}*\rt \subseteq \mc{B}*\rt$.
\end{proof}
Using similar arguments, we can show the following result for the right anniihilator.
\begin{proposition}\label{prop2}
Let $\mc{C}, \mc{D} \in \rt $. Then $\rt *\mc{C} \subseteq \rt *\mc{D}$ if and only if  $\textup{rann}(\mc{D}) \subseteq \textup{rann}(\mc{C})$.
\end{proposition}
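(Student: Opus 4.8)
The plan is to exploit the left--right duality of the ring $\rt$ and simply transcribe the argument used for Proposition \ref{pro1}, swapping the roles of left and right multiplication throughout. There the statement concerned right ideals $\mc{A}*\rt$ and left annihilators; here we have left ideals $\rt*\mc{C}$ and right annihilators, so every product $\mc{U}*\mc{B}$ appearing in the earlier proof becomes a product $\mc{D}*\mc{U}$, and the inner inverse is applied on the opposite side. I would prove both implications separately.

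For the forward direction I would assume $\rt*\mc{C}\subseteq\rt*\mc{D}$ and take an arbitrary $\mc{U}\in\textup{rann}(\mc{D})$, so that $\mc{D}*\mc{U}=\mc{O}$. Since $\mc{C}=\mc{I}*\mc{C}\in\rt*\mc{C}\subseteq\rt*\mc{D}$, there is some $\mc{V}\in\rt$ with $\mc{C}=\mc{V}*\mc{D}$. Then a single computation $\mc{C}*\mc{U}=\mc{V}*\mc{D}*\mc{U}=\mc{V}*\mc{O}=\mc{O}$ shows $\mc{U}\in\textup{rann}(\mc{C})$, giving $\textup{rann}(\mc{D})\subseteq\textup{rann}(\mc{C})$.

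For the converse I would assume $\textup{rann}(\mc{D})\subseteq\textup{rann}(\mc{C})$ and invoke an inner inverse $\mc{D}^{(1)}$ of $\mc{D}$ (whose existence is exactly what Algorithm \ref{AlgoGen} and the framework surrounding Definition \ref{defgi} guarantee, and which was used identically in Proposition \ref{pro1}). The key observation is that $\mc{D}*(\mc{I}-\mc{D}^{(1)}*\mc{D})=\mc{D}-\mc{D}*\mc{D}^{(1)}*\mc{D}=\mc{O}$, so $\mc{I}-\mc{D}^{(1)}*\mc{D}\in\textup{rann}(\mc{D})\subseteq\textup{rann}(\mc{C})$; hence $\mc{C}*(\mc{I}-\mc{D}^{(1)}*\mc{D})=\mc{O}$, i.e. $\mc{C}=\mc{C}*\mc{D}^{(1)}*\mc{D}$. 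Given any $\mc{S}\in\rt*\mc{C}$, write $\mc{S}=\mc{U}*\mc{C}$ and substitute to get $\mc{S}=\mc{U}*\mc{C}*\mc{D}^{(1)}*\mc{D}=(\mc{U}*\mc{C}*\mc{D}^{(1)})*\mc{D}\in\rt*\mc{D}$, establishing $\rt*\mc{C}\subseteq\rt*\mc{D}$.

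I do not expect a genuine obstacle here, since the result is the faithful mirror image of Proposition \ref{pro1}; the only point demanding care is the non-commutativity of $\rt$, which forces me to keep the factors $\mc{D}^{(1)}*\mc{D}$ on the correct (right) side throughout and to multiply by arbitrary tensors from the left rather than the right. The single nontrivial ingredient, as before, is the availability of an inner inverse $\mc{D}^{(1)}$; once that is granted, both inclusions follow from the two short manipulations above.
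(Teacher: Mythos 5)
Your proof is correct and is essentially the paper's own argument: the paper proves Proposition \ref{pro1} in detail and then simply asserts that Proposition \ref{prop2} follows ``using similar arguments,'' and your left--right mirrored transcription (multiplying by arbitrary tensors on the left, keeping $\mc{D}^{(1)}*\mc{D}$ on the right) is exactly that intended argument. Your reliance on the existence of an inner inverse $\mc{D}^{(1)}$ in the converse direction matches the paper's identical implicit use of $\mc{B}^{(1)}$ in Proposition \ref{pro1}, so there is no gap beyond what the paper itself assumes.
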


The existence of solution of tensor equation through one-inverse is discussed in the following theorem, which can be easily proved. 
\begin{theorem}
Let $\rt$ be an associative ring with identity $\mc{I}$. Let $\mc{A} \in \rt$ and $\mc{X} \in \rt$. Then the following statements are equivalent:
\begin{enumerate}
    \item[(i)] $\mc{X}*\mc{B}$ is a solution of the multilinear system $\mc{A}*\mc{Y}=\mc{B}$ whenever $\mc{B}\in \mc{A}*\rt$.
    \item[(ii)] $\mc{A}*\mc{X}*\mc{A}=\mc{A}$.
\end{enumerate}
\end{theorem}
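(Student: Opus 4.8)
The plan is to establish the two implications separately, reading the statement as a characterization of inner inverses in the associative ring $\rt$. The whole argument rests on associativity of $*$ together with the presence of the identity $\mc{I}$, so I expect both directions to be short; the only care needed is to preserve the order of factors, since $\rt$ is non-commutative.

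For the direction $(ii)\Rightarrow(i)$, I would assume $\mc{A}*\mc{X}*\mc{A}=\mc{A}$ and take an arbitrary $\mc{B}\in\mc{A}*\rt$, so that $\mc{B}=\mc{A}*\mc{W}$ for some $\mc{W}\in\rt$. Setting $\mc{Y}=\mc{X}*\mc{B}$ and computing $\mc{A}*(\mc{X}*\mc{B})=\mc{A}*\mc{X}*\mc{A}*\mc{W}$, I would collapse $\mc{A}*\mc{X}*\mc{A}$ to $\mc{A}$ by the hypothesis, obtaining $\mc{A}*\mc{W}=\mc{B}$. Hence $\mc{X}*\mc{B}$ solves $\mc{A}*\mc{Y}=\mc{B}$, which is exactly $(i)$.

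For $(i)\Rightarrow(ii)$, the key observation is that $\mc{A}$ itself belongs to the principal right ideal $\mc{A}*\rt$, since $\mc{A}=\mc{A}*\mc{I}$. I would therefore specialize the hypothesis $(i)$ to the particular choice $\mc{B}=\mc{A}$: then $\mc{X}*\mc{A}$ is a solution of the system $\mc{A}*\mc{Y}=\mc{A}$, i.e. $\mc{A}*(\mc{X}*\mc{A})=\mc{A}$, and by associativity this reads $\mc{A}*\mc{X}*\mc{A}=\mc{A}$, which is precisely $(ii)$.

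The main (and only) obstacle is recognizing the correct specialization $\mc{B}=\mc{A}$ that forces the inner-inverse identity; everything else is a one-line manipulation. In particular, no appeal to commutativity is made, so the argument is valid verbatim in the non-commutative ring $\rt$, and the theorem is indeed established with no further computation.
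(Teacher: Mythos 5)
Your proof is correct, and it is precisely the short argument the paper has in mind (the paper omits the proof, stating only that the theorem "can be easily proved"). Your identification of exactly where the identity $\mc{I}$ is needed --- namely to get $\mc{A}=\mc{A}*\mc{I}\in\mc{A}*\rt$ in the direction $(i)\Rightarrow(ii)$ --- is also consistent with the paper's follow-up example showing the statement fails in an associative ring without identity.
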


However, if $\rt$ is an associative ring without identity then the above theorem does not hold in general, as shown by the next example.
\begin{example}\rm
Consider an associative ring of all $3\times 3\times 3$ real tensor. The first, second and third frontal slices are
\begin{eqnarray*}
    \begin{pmatrix}
    a & a & a\\
    0 & 0 & 0\\
     0 & 0 & 0
    \end{pmatrix},~
    \begin{pmatrix}
     0 & 0 & 0\\
     0 & 0 & 0\\
      0 & 0 & 0
    \end{pmatrix},~~
    \begin{pmatrix}
    0 & 0 & 0\\
    0 & 0 & 0\\
     0 & 0 & 0
    \end{pmatrix}.
    \end{eqnarray*}
 Consider $\mc{A}$ in the above set of frontal slice. But $\mc{A}^\dg$ is not in the above sets, i.e., The first, second and third frontal slices of $\mc{A}^\dg$ are:
   \begin{eqnarray*}
    \begin{pmatrix}
    \frac{1}{3a} & 0 & 0\\
    \frac{1}{3a} & 0 & 0\\
     \frac{1}{3a} & 0 & 0
    \end{pmatrix},~
    \begin{pmatrix}
     0 & 0 & 0\\
     0 & 0 & 0\\
      0 & 0 & 0
    \end{pmatrix},~~
    \begin{pmatrix}
    0 & 0 & 0\\
    0 & 0 & 0\\
     0 & 0 & 0
    \end{pmatrix}.
    \end{eqnarray*}
   \end{example}

We now discuss the reverse order of $\mc{A}^{(1)}$ and $ \mc{A}^{(2)}$ inverses.

\begin{proposition}
Let $\mc{A},\mc{B}\in \rt$. Then the following are true.
\begin{enumerate}
    \item[(i)] If $\mc{A}^{(1)}*\mc{A}*\mc{B}=\mc{B}*\mc{A}^{(1)}*\mc{A}$, then $        (\mc{A}*\mc{B})^{(1)}=\mc{B}^{(1)}*\mc{A}^{(1)}$
    \item[(ii)] If $\mc{B}*\mc{B}^{(2)}*\mc{A}^{(2)}=\mc{A}^{(2)}*\mc{B}*\mc{B}^{(2)}$, then $        (\mc{A}*\mc{B})^{(2)}=\mc{B}^{(2)}*\mc{A}^{(2)}.
    $
\end{enumerate}
\end{proposition}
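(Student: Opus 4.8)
The plan is to handle both parts by the same direct strategy: take the proposed product as a candidate generalized inverse of $\mc{A}*\mc{B}$ and verify that it satisfies the one defining equation it is required to satisfy, using only associativity of the ring $\rt$, the defining relations for $\mc{A}^{(i)}$ and $\mc{B}^{(i)}$, and the stated commutation hypothesis inserted at exactly the right position.

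For part (i), I would set $\mc{Z}=\mc{B}^{(1)}*\mc{A}^{(1)}$ and check equation $(1)$ of Definition \ref{defgi} for $\mc{A}*\mc{B}$, namely
\begin{equation*}
(\mc{A}*\mc{B})*(\mc{B}^{(1)}*\mc{A}^{(1)})*(\mc{A}*\mc{B})=\mc{A}*\mc{B}.
\end{equation*}
After dropping parentheses by associativity the left side is $\mc{A}*\mc{B}*\mc{B}^{(1)}*\mc{A}^{(1)}*\mc{A}*\mc{B}$. The key moves are: first rewrite the trailing block $\mc{A}^{(1)}*\mc{A}*\mc{B}$ as $\mc{B}*\mc{A}^{(1)}*\mc{A}$ by the hypothesis, which creates the subword $\mc{B}*\mc{B}^{(1)}*\mc{B}$; collapse that to $\mc{B}$ via $\mc{B}*\mc{B}^{(1)}*\mc{B}=\mc{B}$; then apply the hypothesis once more (now reading $\mc{B}*\mc{A}^{(1)}*\mc{A}=\mc{A}^{(1)}*\mc{A}*\mc{B}$) to expose the subword $\mc{A}*\mc{A}^{(1)}*\mc{A}$, which collapses to $\mc{A}$ and leaves exactly $\mc{A}*\mc{B}$.

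For part (ii), the argument is the mirror image. I would set $\mc{Z}=\mc{B}^{(2)}*\mc{A}^{(2)}$ and verify equation $(2)$, i.e. $\mc{Z}*(\mc{A}*\mc{B})*\mc{Z}=\mc{Z}$. Expanding gives $\mc{B}^{(2)}*\mc{A}^{(2)}*\mc{A}*\mc{B}*\mc{B}^{(2)}*\mc{A}^{(2)}$; here I would use $\mc{B}*\mc{B}^{(2)}*\mc{A}^{(2)}=\mc{A}^{(2)}*\mc{B}*\mc{B}^{(2)}$ to expose the subword $\mc{A}^{(2)}*\mc{A}*\mc{A}^{(2)}$, collapse it to $\mc{A}^{(2)}$ by the defining relation for the $\{2\}$-inverse, apply the hypothesis a second time to expose $\mc{B}^{(2)}*\mc{B}*\mc{B}^{(2)}$, and collapse that to $\mc{B}^{(2)}$, recovering $\mc{B}^{(2)}*\mc{A}^{(2)}$.

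The computations themselves are routine once associativity is granted (it holds because $\rt$ is a ring). The only genuine obstacle is bookkeeping: the commutation hypothesis must be inserted at precisely the spot that produces a $\mc{A}*\mc{A}^{(1)}*\mc{A}$ pattern (respectively a $\mc{A}^{(2)}*\mc{A}*\mc{A}^{(2)}$ pattern), since applying it elsewhere leads nowhere. I would therefore present each verification as a short chain of equalities, annotating every step with whether it invokes the hypothesis or a defining relation, so that the placement is transparent.
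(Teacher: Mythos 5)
Your proposal is correct and follows essentially the same route as the paper: both proofs verify equation $(1)$ (resp.\ $(2)$) for the candidate $\mc{B}^{(1)}*\mc{A}^{(1)}$ (resp.\ $\mc{B}^{(2)}*\mc{A}^{(2)}$) by inserting the commutation hypothesis at the trailing block, collapsing the resulting $\mc{B}*\mc{B}^{(1)}*\mc{B}$ (resp.\ $\mc{A}^{(2)}*\mc{A}*\mc{A}^{(2)}$) pattern, applying the hypothesis once more in reverse, and collapsing the remaining pattern---exactly the paper's chain of equalities. If anything, your write-up is slightly cleaner, since the paper's proof of part (i) opens by restating the hypothesis in a garbled form ($\mc{A}*\mc{A}^{(1)}*\mc{B}=\mc{B}*\mc{A}*\mc{A}^{(1)}$) before using the correct one in its computation.
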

\begin{proof}
$(i)$ Let $\mc{A}*\mc{A}^{(1)}*\mc{B}=\mc{B}*\mc{A}*\mc{A}^{(1)}$. Then the result is follows from 
\begin{eqnarray*}
\mc{A}*\mc{B}*\mc{B}^{(1)}*\mc{A}^{(1)}*\mc{A}*\mc{B}&=&\mc{A}*\mc{B}*\mc{B}^{(1)}*\mc{B}*\mc{A}^{(1)}*\mc{A}=\mc{A}*\mc{B}*\mc{A}^{(1)}*\mc{A}=\mc{A}*\mc{A}^{(1)}*\mc{A}*\mc{B}=\mc{A}*\mc{B}.
\end{eqnarray*}
$(ii)$ Let $\mc{B}*\mc{B}^{(2)}*\mc{A}^{(2)}=\mc{A}^{(2)}*\mc{B}*\mc{B}^{(2)}$. Then 
\begin{eqnarray*}
\mc{B}^{(2)}*\mc{A}^{(2)}*\mc{A}*\mc{B}*\mc{B}^{(2)}*\mc{A}^{(2)}&=&\mc{B}^{(2)}*\mc{A}^{(2)}*\mc{A}*\mc{A}^{(2)}*\mc{B}*\mc{B}^{(2)}=\mc{B}^{(2)}*\mc{A}^{(2)}*\mc{B}*\mc{B}^{(2)}\\
&=&\mc{B}^{(2)}*\mc{B}*\mc{B}^{(2)}*\mc{A}^{(2)}=\mc{B}^{(2)}*\mc{A}^{(2)}.
\end{eqnarray*}
\end{proof}

The next result is for reflexive generalized inverse of a tensor
$\mc{A}\in \rt$, which can be easily proved.

\begin{lemma}\label{12}
Let $\mc{A},\mc{X}\in\rt$ If $\mc{Y}, \mc{Z} \in  \mc{A}{\{1\}}$ and $\mc{X} = \mc{Y}*\mc{A}*\mc{Z}$, then $\mc{X} \in  \mc{A}{\{1, 2\}}$.
\end{lemma}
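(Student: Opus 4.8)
The plan is to verify the two defining equations of a $\{1,2\}$-inverse directly, namely $\mc{A}*\mc{X}*\mc{A} = \mc{A}$ (condition (1)) and $\mc{X}*\mc{A}*\mc{X} = \mc{X}$ (condition (2)), using only the hypotheses that $\mc{Y}, \mc{Z}$ are inner inverses (so $\mc{A}*\mc{Y}*\mc{A} = \mc{A}$ and $\mc{A}*\mc{Z}*\mc{A} = \mc{A}$) and the definition $\mc{X} = \mc{Y}*\mc{A}*\mc{Z}$. Since $\rt$ is an associative ring, I may freely regroup products.

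First I would establish condition (1). Substituting the definition of $\mc{X}$ gives
\begin{equation*}
\mc{A}*\mc{X}*\mc{A} = \mc{A}*(\mc{Y}*\mc{A}*\mc{Z})*\mc{A} = (\mc{A}*\mc{Y}*\mc{A})*(\mc{Z}*\mc{A}).
\end{equation*}
Here I use $\mc{A}*\mc{Y}*\mc{A} = \mc{A}$ (since $\mc{Y} \in \mc{A}\{1\}$) to reduce the first grouped factor to $\mc{A}$, obtaining $\mc{A}*\mc{Z}*\mc{A}$, which equals $\mc{A}$ because $\mc{Z} \in \mc{A}\{1\}$. This settles (1).

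Next I would verify condition (2). Expanding,
\begin{equation*}
\mc{X}*\mc{A}*\mc{X} = (\mc{Y}*\mc{A}*\mc{Z})*\mc{A}*(\mc{Y}*\mc{A}*\mc{Z}) = \mc{Y}*(\mc{A}*\mc{Z}*\mc{A})*(\mc{Y}*\mc{A}*\mc{Z}).
\end{equation*}
Applying $\mc{A}*\mc{Z}*\mc{A} = \mc{A}$ collapses the middle block to $\mc{A}$, leaving $\mc{Y}*\mc{A}*(\mc{Y}*\mc{A}*\mc{Z}) = \mc{Y}*(\mc{A}*\mc{Y}*\mc{A})*\mc{Z}$; then $\mc{A}*\mc{Y}*\mc{A} = \mc{A}$ gives $\mc{Y}*\mc{A}*\mc{Z} = \mc{X}$, as required.

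There is essentially no serious obstacle here: the entire argument is associative bookkeeping, and the only subtlety is choosing the right groupings so that the reductions $\mc{A}*\mc{Y}*\mc{A}=\mc{A}$ and $\mc{A}*\mc{Z}*\mc{A}=\mc{A}$ can be applied cleanly. If anything, the mild point worth care is that I must insert and cancel the correct inner-inverse relation at each step (using $\mc{Y}$'s relation for one reduction and $\mc{Z}$'s for the other), rather than assuming any commutativity, since $\rt$ is non-commutative. Once both (1) and (2) hold, $\mc{X} \in \mc{A}\{1,2\}$ by Definition \ref{defgi}(b), completing the proof.
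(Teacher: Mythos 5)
Your proof is correct: both reductions $\mc{A}*\mc{X}*\mc{A}=(\mc{A}*\mc{Y}*\mc{A})*\mc{Z}*\mc{A}=\mc{A}*\mc{Z}*\mc{A}=\mc{A}$ and $\mc{X}*\mc{A}*\mc{X}=\mc{Y}*(\mc{A}*\mc{Z}*\mc{A})*\mc{Y}*\mc{A}*\mc{Z}=\mc{Y}*(\mc{A}*\mc{Y}*\mc{A})*\mc{Z}=\mc{X}$ are valid associative bookkeeping. The paper states this lemma without proof (remarking only that it "can be easily proved"), and your direct verification is precisely the standard computation the authors intend.
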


The next results are discussed for $\{1,3\}$ and $\{1, 4\}$ inverses of a tensor $\mc{A} \in \rt$, which can be proved easily.

\begin{lemma}
 Let $\mc{A}\in \rt$. Then the following are holds.
\begin{enumerate}
    \item[(i)] If  $\mc{A}=\mc{X}*\mc{A}^T*\mc{A}$ for some $\mc{X}\in \rt$, then $\mc{X}^T$ is a $\{1, 3\}$-inverse of $\mc{A}$.
    \item[(ii)]  If $\mc{A}=\mc{A}*\mc{A}^T*\mc{Y}$ for some $\mc{Y}\in R$, then $\mc{Y}^T$ is a $\{1, 4\}$-inverse of $\mc{A}$. 
\end{enumerate}
\end{lemma}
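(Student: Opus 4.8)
The plan is to reduce both parts to a single algebraic identity obtained by transposing the hypothesis, and then to exploit the two defining features of the transpose on the $t$-product ring: it is involutive, $(\mc{A}^T)^T=\mc{A}$, and it reverses products, $(\mc{P}*\mc{Q})^T=\mc{Q}^T*\mc{P}^T$. Both follow from Definition \ref{TransDefn} together with the block-circulant representation (they are exactly the axioms making $\rt$ a ring with involution), and I would take them as given. The two parts are mirror images of each other, so I would prove $(i)$ in full and then indicate that $(ii)$ follows by the symmetric argument on the other side.

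For part $(i)$, I would set $\mc{Z}=\mc{X}^T$ and transpose the hypothesis $\mc{A}=\mc{X}*\mc{A}^T*\mc{A}$. Using the antiautomorphism and involution laws this yields the compact identity $\mc{A}^T=\mc{A}^T*\mc{A}*\mc{Z}$; call it $(\ast)$. I would verify condition $(3)$ of Definition \ref{defgi} first. Writing $(\mc{A}*\mc{Z})^T=\mc{Z}^T*\mc{A}^T$ and substituting $(\ast)$ into the trailing $\mc{A}^T$ gives $(\mc{A}*\mc{Z})^T=\mc{Z}^T*\mc{A}^T*\mc{A}*\mc{Z}=(\mc{A}*\mc{Z})^T*(\mc{A}*\mc{Z})$. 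The right-hand side is visibly self-transpose, so transposing this equation forces $\mc{A}*\mc{Z}$ to equal the same quantity; comparing shows $(\mc{A}*\mc{Z})^T=\mc{A}*\mc{Z}$, which is $(3)$. Condition $(1)$ then falls out: transposing $(\ast)$ gives $\mc{Z}^T*\mc{A}^T*\mc{A}=\mc{A}$, and substituting the just-proved symmetry $\mc{A}*\mc{Z}=\mc{Z}^T*\mc{A}^T$ into $\mc{A}*\mc{Z}*\mc{A}$ collapses it to $\mc{A}$. Hence $\mc{Z}=\mc{X}^T$ satisfies $(1)$ and $(3)$, i.e. it is a $\{1,3\}$-inverse.

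Part $(ii)$ is the same computation performed on the right. I would set $\mc{W}=\mc{Y}^T$ and transpose $\mc{A}=\mc{A}*\mc{A}^T*\mc{Y}$ to obtain $\mc{A}^T=\mc{W}*\mc{A}*\mc{A}^T$, then first establish $(\mc{W}*\mc{A})^T=\mc{W}*\mc{A}$ (condition $(4)$) by the identical self-transpose trick, and afterwards deduce $(1)$ from $\mc{A}*\mc{A}^T*\mc{W}^T=\mc{A}$ together with $\mc{W}*\mc{A}=\mc{A}^T*\mc{W}^T$.

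The one step needing genuine care is the derivation of the Hermitian-type conditions $(3)$ and $(4)$: the crux is recognizing that an expression such as $\mc{Z}^T*\mc{A}^T*\mc{A}*\mc{Z}$ is fixed by the transpose, so that an equation of the shape $\mc{P}^T=\mc{P}^T*\mc{P}$ with $\mc{P}^T*\mc{P}$ self-transpose forces $\mc{P}^T=\mc{P}$. Everything else is a routine bookkeeping exercise in the involution and product-reversal laws; in particular I would be careful to use \emph{no} commutativity anywhere, since $\rt$ is non-commutative and the order of the factors is essential throughout.
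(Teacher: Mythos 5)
Your proof is correct, and in fact it supplies something the paper itself omits: the paper only asserts that this lemma ``can be proved easily'' and gives no argument, so there is no in-paper proof to compare against. Your two key ingredients are both sound for the $t$-product ring: the transpose is an involutive antiautomorphism (the paper uses $(\mc{P}*\mc{Q})^T=\mc{Q}^T*\mc{P}^T$ freely, e.g.\ in its proof of the theorem characterizing $\{1,3\}$-inverses), and an identity of the shape $\mc{P}^T=\mc{P}^T*\mc{P}$ forces $\mc{P}^T=\mc{P}$ because $\mc{P}^T*\mc{P}$ is fixed by the transpose. From there your chain
$(\mc{A}*\mc{Z})^T=\mc{Z}^T*\mc{A}^T=\mc{Z}^T*\mc{A}^T*\mc{A}*\mc{Z}=(\mc{A}*\mc{Z})^T*(\mc{A}*\mc{Z})$
gives condition $(3)$, and then $\mc{A}*\mc{Z}*\mc{A}=\mc{Z}^T*\mc{A}^T*\mc{A}=\mc{X}*\mc{A}^T*\mc{A}=\mc{A}$ gives condition $(1)$; the mirrored computation handles part $(ii)$. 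A marginally shorter route you might note: substituting the transposed hypothesis $\mc{A}^T*\mc{A}*\mc{X}^T=\mc{A}^T$ directly into $\mc{A}*\mc{X}^T=\mc{X}*\mc{A}^T*\mc{A}*\mc{X}^T$ yields $\mc{A}*\mc{X}^T=\mc{X}*\mc{A}^T$ at once, whose transpose is itself, so $(3)$ and then $(1)$ drop out without needing the ``self-transpose forcing'' lemma. Both arguments are the same few lines of involution bookkeeping, and your care to avoid any commutativity is exactly right for $\rt$.
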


\begin{theorem}
Let $\mc{A},\mc{B}\in\rt$. Then the following statements are equivalent:
\begin{enumerate}
    \item[(i)] $\mc{B} \in \mc{A}\{1,3\}$.
 \item[(ii)] $ \mc{A}^T * \mc{A}* \mc{B} =\mc{A}^T$.
 \item[(iii)] $\mc{A} * \mc{B}= \mc{A}* (\mc{A}^T* \mc{A})^{(1)}*
 \mc{A}^T.$
\end{enumerate}
\end{theorem}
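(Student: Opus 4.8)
The plan is to prove $(i)\Leftrightarrow(ii)$ by purely formal manipulation with the involution and then obtain $(ii)\Leftrightarrow(iii)$ from a single auxiliary identity. Throughout I would use that the transpose is an involution on $\rt$, i.e. $(\mc{X}^T)^T=\mc{X}$, $(\mc{X}+\mc{Y})^T=\mc{X}^T+\mc{Y}^T$ and $(\mc{X}*\mc{Y})^T=\mc{Y}^T*\mc{X}^T$, which follow from Definition \ref{TransDefn} together with the block-circulant description of $*$.

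First, the equivalence $(i)\Leftrightarrow(ii)$. For $(i)\Rightarrow(ii)$ I would start from $\mc{A}*\mc{B}*\mc{A}=\mc{A}$ and $(\mc{A}*\mc{B})^T=\mc{A}*\mc{B}$ and compute $\mc{A}^T*\mc{A}*\mc{B}=\mc{A}^T*(\mc{A}*\mc{B})=\mc{A}^T*(\mc{A}*\mc{B})^T=(\mc{A}*\mc{B}*\mc{A})^T=\mc{A}^T$. For $(ii)\Rightarrow(i)$, writing $\mc{M}=\mc{A}*\mc{B}$ and left-multiplying $\mc{A}^T*\mc{A}*\mc{B}=\mc{A}^T$ by $\mc{B}^T$ gives $\mc{M}^T*\mc{M}=\mc{M}^T$; transposing this forces $\mc{M}^T=\mc{M}$, which is equation $(3)$; then transposing the hypothesis to get $\mc{M}^T*\mc{A}=\mc{A}$ and using $\mc{M}^T=\mc{M}$ yields $\mc{A}*\mc{B}*\mc{A}=\mc{A}$, equation $(1)$. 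This half uses only the involution axioms.

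The substance lies in a cancellation lemma: for $\mc{E}\in\rt$, $\mc{A}^T*\mc{A}*\mc{E}=\mc{O}$ implies $\mc{A}*\mc{E}=\mc{O}$, and the transposed version $\mc{E}*\mc{A}^T*\mc{A}=\mc{O}\Rightarrow\mc{E}*\mc{A}^T=\mc{O}$. Left-multiplying the hypothesis by $\mc{E}^T$ turns it into $(\mc{A}*\mc{E})^T*(\mc{A}*\mc{E})=\mc{O}$, so what I really need is that the involution is \emph{proper}: $\mc{X}^T*\mc{X}=\mc{O}\Rightarrow\mc{X}=\mc{O}$. I would establish this by passing to the Fourier domain of Eq.~\eqref{los}, where the transpose corresponds to taking the conjugate transpose of each diagonal block $\Sigma_i$; then $\mc{X}^T*\mc{X}=\mc{O}$ becomes $\Sigma_i^{*}\Sigma_i=0$ for every block, forcing each $\Sigma_i=0$ and hence $\mc{X}=\mc{O}$. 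This is the main obstacle, since it is the only step where the concrete analytic structure of $\rt$, rather than abstract ring-with-involution algebra, is used.

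With the lemma in hand the rest is formal. Writing $\mc{G}=(\mc{A}^T*\mc{A})^{(1)}$, I would first prove the key identity $\mc{A}^T*\mc{A}*\mc{G}*\mc{A}^T=\mc{A}^T$: since $\big(\mc{A}^T*\mc{A}*\mc{G}-\mc{I}\big)*\mc{A}^T*\mc{A}=\mc{A}^T*\mc{A}*\mc{G}*\mc{A}^T*\mc{A}-\mc{A}^T*\mc{A}=\mc{O}$ by the inner-inverse property of $\mc{G}$, the right-hand cancellation gives $\big(\mc{A}^T*\mc{A}*\mc{G}-\mc{I}\big)*\mc{A}^T=\mc{O}$, which is exactly the identity. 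Then $(iii)\Rightarrow(ii)$ is immediate, namely $\mc{A}^T*\mc{A}*\mc{B}=\mc{A}^T*(\mc{A}*\mc{G}*\mc{A}^T)=\mc{A}^T$; and for $(ii)\Rightarrow(iii)$ I would compute $\mc{A}^T*\mc{A}*(\mc{B}-\mc{G}*\mc{A}^T)=\mc{A}^T-\mc{A}^T=\mc{O}$ using $(ii)$ and the identity, after which the left cancellation lemma yields $\mc{A}*\mc{B}=\mc{A}*\mc{G}*\mc{A}^T$, closing the cycle.
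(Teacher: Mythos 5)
Your proof is correct, but it takes a genuinely different route from the paper's for two of the three implications. The paper argues cyclically $(i)\Rightarrow(ii)\Rightarrow(iii)\Rightarrow(i)$: its $(i)\Rightarrow(ii)$ coincides with yours; its $(ii)\Rightarrow(iii)$ avoids any cancellation lemma by exploiting $(ii)$ itself --- transposing $(ii)$ gives $\mc{A}=\mc{B}^T*\mc{A}^T*\mc{A}$, so $\mc{A}*(\mc{A}^T*\mc{A})^{(1)}*\mc{A}^T*\mc{A}*\mc{B}=\mc{B}^T*(\mc{A}^T*\mc{A})*(\mc{A}^T*\mc{A})^{(1)}*(\mc{A}^T*\mc{A})*\mc{B}=\mc{B}^T*\mc{A}^T*\mc{A}*\mc{B}=\mc{A}*\mc{B}$, a purely ring-with-involution computation; and its $(iii)\Rightarrow(i)$ invokes a tensor $\mc{A}^{(1,3)}$ to establish $\mc{A}^T\in\mc{A}^T*\mc{A}*\rt$, i.e.\ it implicitly assumes that a $\{1,3\}$-inverse of $\mc{A}$ already exists --- true over $\rt$, but never justified in that proof, and logically delicate in a theorem whose whole point is to characterize $\mc{A}\{1,3\}$. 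You instead prove $(ii)\Rightarrow(i)$ directly via the neat observation that $\mc{M}^T*\mc{M}=\mc{M}^T$ forces $\mc{M}=\mc{M}^T$ (pure involution algebra, no existence assumption), and you obtain $(ii)\Leftrightarrow(iii)$ from the $*$-cancellation property $\mc{X}^T*\mc{X}=\mc{O}\Rightarrow\mc{X}=\mc{O}$, proved by passing to the Fourier block-diagonalization where the tensor transpose becomes the conjugate transpose of each block $\Sigma_i$. That correspondence is standard for the $t$-product (it is exactly why the transpose reverses slices $2$ through $n_p$), but you assert it rather than prove it, so cite it or verify it; alternatively note that the first frontal slice of $\mc{X}^T*\mc{X}$ is $\sum_k X_k^TX_k$, whose trace is $\sum_k\|X_k\|_F^2$, which gives properness with no Fourier transform at all. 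As for what each route buys: the paper's is shorter and, apart from the unproved existence of $\mc{A}^{(1,3)}$, stays inside abstract $*$-ring algebra; yours is self-contained, repairs that gap, and isolates precisely where the concrete analytic structure of $\rt$ must enter (properness of the involution), at the modest cost of one extra cancellation lemma.
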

\begin{proof}
$(i)\Rightarrow(ii)$ Let $\mc{B} \in \mc{A}\{1,3\}$. Then  $\mc{A} * \mc{B}*
\mc{A}=\mc{A}$  and $(\mc{A} * \mc{B})^T=\mc{A} * \mc{B}$. Now 
\begin{center}
  $\mc{A}^T= (\mc{A} * \mc{B}* \mc{A})^T =\mc{A}^T *(\mc{A}* \mc{B})^T=\mc{A}^T * \mc{A}*\mc{B}$.
\end{center}
$(ii)\Rightarrow (iii)$ Let $ \mc{A}^T * \mc{A}* \mc{B} =\mc{A}^T$. Pre-multiplying by $\mc{A}*(\mc{A}^T *
 \mc{A})^{(1)}$, we obtain
 \begin{center}
     $\mc{A}* (\mc{A}^T* \mc{A})^{(1)}*
 \mc{A}^T=\mc{A}* (\mc{A}^T* \mc{A})^{(1)}*\mc{A}^T * \mc{A}* \mc{B}= \mc{B}^T*\mc{A}^T*\mc{A}* \mc{B}=\mc{A}* \mc{B}$.
 \end{center}
$(iii)\Rightarrow (i)$ First we will show that $\rt*\mc{A}^T\subseteq\rt*\mc{A}^T*\mc{A}$. Let $\mc{Y}\in \rt*\mc{A}^T$. Then $\mc{Y}=\mc{A}^T*\mc{Z}$ for some $\mc{Z}\in\rt$. Now 
\begin{center}
    $\mc{Y}=\mc{A}^T*\mc{Z}=(\mc{A}*\mc{A}^{(1,3)}*\mc{A})^T*\mc{Z}=\mc{A}^T*\mc{A}*\mc{T}$ where $\mc{T}=\mc{A}^{(1,3)}*\mc{Z}\in\rt$.
\end{center}
Thus $\rt*\mc{A}^T\subseteq\rt*\mc{A}^T*\mc{A}$. From this condition we can show that $\mc{A} * (\mc{A}^T *
\mc{A})^{(1)}* \mc{A}^T * \mc{A}=\mc{A}$, so we have $\mc{A}* \mc{B}*\mc{A}=\mc{A}$.   Further, $(\mc{A}* \mc{B})^T=\mc{A} * (\mc{A} * \mc{A}^T)^{(1)}* \mc{A}^T=\mc{A}* \mc{B}.$
\end{proof}

Similarly, for $\{1,4\}$-inverse, we state the following result without proof.
\begin{theorem}
The following three conditions are equivalent:\\
 (i) $\mc{B} \in \mc{A}\{1,4\}$.\\
 (ii) $\mc{B} * \mc{A}* \mc{A}^T=\mc{A}^T$.\\
 (iii) $\mc{B} * \mc{A}= \mc{A}^T* (\mc{A}* \mc{A}^T)^{(1)}*
 \mc{A}.$
\end{theorem}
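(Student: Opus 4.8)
The plan is to prove this theorem by a direct analogy with the preceding theorem on $\{1,3\}$-inverses, exploiting the symmetry between left and right multiplication that is induced by the transpose operation. The three conditions are the ``left-handed'' mirror images of the $\{1,3\}$-inverse conditions: where the $\{1,3\}$ theorem had $\mc{A}^T*\mc{A}*\mc{B}=\mc{A}^T$ and $\mc{A}*\mc{B}=\mc{A}*(\mc{A}^T*\mc{A})^{(1)}*\mc{A}^T$, here we have $\mc{B}*\mc{A}*\mc{A}^T=\mc{A}^T$ and $\mc{B}*\mc{A}=\mc{A}^T*(\mc{A}*\mc{A}^T)^{(1)}*\mc{A}$. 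The cleanest route is to prove the cycle $(i)\Rightarrow(ii)\Rightarrow(iii)\Rightarrow(i)$, mimicking each step of the $\{1,3\}$ proof but with the roles of pre- and post-multiplication interchanged.

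For $(i)\Rightarrow(ii)$, I would start from $\mc{B}\in\mc{A}\{1,4\}$, so $\mc{A}*\mc{B}*\mc{A}=\mc{A}$ and $(\mc{B}*\mc{A})^T=\mc{B}*\mc{A}$. Transposing the inner-inverse identity and inserting the symmetry of $\mc{B}*\mc{A}$ yields
\begin{equation*}
\mc{A}^T=(\mc{A}*\mc{B}*\mc{A})^T=(\mc{B}*\mc{A})^T*\mc{A}^T=\mc{B}*\mc{A}*\mc{A}^T,
\end{equation*}
which is exactly $(ii)$. For $(ii)\Rightarrow(iii)$, I would post-multiply the identity $\mc{B}*\mc{A}*\mc{A}^T=\mc{A}^T$ by $(\mc{A}*\mc{A}^T)^{(1)}*\mc{A}$ and simplify, using that $\mc{A}*\mc{A}^T*(\mc{A}*\mc{A}^T)^{(1)}*\mc{A}=\mc{A}$ once the range inclusion is established, so that $\mc{B}*\mc{A}=\mc{A}^T*(\mc{A}*\mc{A}^T)^{(1)}*\mc{A}$.

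For $(iii)\Rightarrow(i)$, following the template of the $\{1,3\}$ argument, I would first establish the inclusion $\mc{A}^T*\rt\subseteq\mc{A}*\mc{A}^T*\rt$ (the right-sided analogue of $\rt*\mc{A}^T\subseteq\rt*\mc{A}^T*\mc{A}$), then use it to deduce $\mc{A}*\mc{A}^T*(\mc{A}*\mc{A}^T)^{(1)}*\mc{A}=\mc{A}$, which gives the inner-inverse equation $\mc{A}*\mc{B}*\mc{A}=\mc{A}$; finally I would verify $(\mc{B}*\mc{A})^T=\mc{B}*\mc{A}$ directly from the explicit form in $(iii)$. The main obstacle, and the one step deserving genuine care rather than mechanical mirroring, is the range-inclusion lemma: I must justify $\mc{A}^T*\rt\subseteq\mc{A}*\mc{A}^T*\rt$ and then extract the cancellation $\mc{A}*\mc{A}^T*(\mc{A}*\mc{A}^T)^{(1)}*\mc{A}=\mc{A}$ from it. This is precisely where the $\{1,3\}$ proof quietly invoked properties of one-inverses together with an annihilator/ideal-inclusion argument (Propositions \ref{pro1} and \ref{prop2}), so I would lean on those results, writing $\mc{A}^T=(\mc{A}*\mc{A}^{(1,4)}*\mc{A})^T=\mc{A}*\mc{A}^T*\mc{A}^{(1,4)T}$ to exhibit $\mc{A}^T$ as a left multiple of $\mc{A}*\mc{A}^T$; everything else is routine transpose bookkeeping in the non-commutative ring $\rt$.
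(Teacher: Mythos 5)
Your overall route is the intended one: the paper gives no separate proof of this theorem (it is stated "without proof" as the mirror of the $\{1,3\}$ result), and your steps $(i)\Rightarrow(ii)$ and $(ii)\Rightarrow(iii)$ are correct as sketched. In $(ii)\Rightarrow(iii)$ you do not even need a separate range lemma: transposing $(ii)$ gives $\mc{A}*\mc{A}^T*\mc{B}^T=\mc{A}$, which exhibits $\mc{A}$ as a right multiple of $\mc{A}*\mc{A}^T$, and the cancellation $\mc{A}*\mc{A}^T*(\mc{A}*\mc{A}^T)^{(1)}*\mc{A}=\mc{A}$ then follows from the inner-inverse identity.

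The genuine problem is in $(iii)\Rightarrow(i)$, precisely the step you flagged as needing care: both the inclusion you state and the identity you write to prove it have the sides flipped, and both are false. The inclusion $\mc{A}^T*\rt\subseteq\mc{A}*\mc{A}^T*\rt$ asserts $\mc{A}^T=\mc{A}*\mc{A}^T*\mc{U}$ for some $\mc{U}$, i.e.\ (in matrix language) that the row space lies inside the column space; this already fails for the tensor whose first frontal slice is $E_{12}$ (a single $1$ in position $(1,2)$) and whose other slices are zero. Likewise your displayed identity $\mc{A}^T=(\mc{A}*\mc{A}^{(1,4)}*\mc{A})^T=\mc{A}*\mc{A}^T*\mc{A}^{(1,4)T}$ is wrong: the only symmetry available is that of $\mc{A}^{(1,4)}*\mc{A}$ (condition $(4)$), so the correct grouping is $(\mc{A}*\mc{A}^{(1,4)}*\mc{A})^T=(\mc{A}^{(1,4)}*\mc{A})^T*\mc{A}^T=\mc{A}^{(1,4)}*\mc{A}*\mc{A}^T$, a \emph{left} multiple of $\mc{A}*\mc{A}^T$ (consistent with your prose but not your formula); your right-hand side instead uses the $\{1,3\}$ symmetry $(\mc{A}*\mc{A}^{(1,4)})^T=\mc{A}*\mc{A}^{(1,4)}$, which you do not have, and in fact $\mc{A}*\mc{A}^T*\mc{A}^{(1,4)T}=\mc{A}*(\mc{A}^{(1,4)}*\mc{A})^T=\mc{A}\neq\mc{A}^T$ in general. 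The correct lemma is $\rt*\mc{A}^T\subseteq\rt*\mc{A}*\mc{A}^T$, equivalently $\mc{A}\in(\mc{A}*\mc{A}^T)*\rt$: transposing $\mc{A}^T=\mc{A}^{(1,4)}*\mc{A}*\mc{A}^T$ gives $\mc{A}=\mc{A}*\mc{A}^T*(\mc{A}^{(1,4)})^T$, after which the last factor of $\mc{A}*\mc{A}^T*(\mc{A}*\mc{A}^T)^{(1)}*\mc{A}$ cancels against the inner inverse to yield $\mc{A}*\mc{B}*\mc{A}=\mc{A}$; with your version the spurious factor sits on the wrong side of $(\mc{A}*\mc{A}^T)^{(1)}$ and nothing cancels. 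The same corrected membership is also what justifies your final symmetry check, since transposing $(iii)$ replaces $(\mc{A}*\mc{A}^T)^{(1)}$ by its transpose and one needs invariance of $\mc{A}^T*\mc{X}*\mc{A}$ over inner inverses $\mc{X}$ of $\mc{A}*\mc{A}^T$. (For what it is worth, the paper's own $\{1,3\}$ proof contains the same left/right mislabeling of ideals, which may be what misled you; but there only the label is wrong, whereas here the computation itself must be repaired as above.)
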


A sufficient condition for the reverse order law of
$\{1,4\}$-inverses and $\{1,3\}$-inverses of tensors is given in the next result.

\begin{theorem}\label{2.26}
The following conditions are true for any $\mc{A},\mc{B}\in\rt$, 
\begin{itemize}
    \item[(i)] If $\mc{A}^{(1,4)}* \mc{A}* \mc{B}* \mc{B}^T$ is symmetric, then $(\mc{A} * \mc{B})^{(1,4)}=\mc{B}^{(1,4)} * \mc{A}^{(1,4)}.$ 
\item[(ii)] If $\mc{A}* \mc{A}^{(1,3)}*  \mc{B}^T* \mc{B}$ is symmetric, then $(\mc{A} * \mc{B})^{(1,3)}=\mc{B}^{(1,3)} * \mc{A}^{(1,3)}.$
\end{itemize}
\end{theorem}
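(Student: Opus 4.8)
The plan is to prove (i) by checking that $\mc{X}:=\mc{B}^{(1,4)}*\mc{A}^{(1,4)}$ satisfies the two defining equations of a $\{1,4\}$-inverse of $\mc{C}:=\mc{A}*\mc{B}$ from Definition \ref{defgi}, namely equation $(1)$, $\mc{C}*\mc{X}*\mc{C}=\mc{C}$, and equation $(4)$, the symmetry of $\mc{X}*\mc{C}$. The first step is to recast the hypothesis as a commutation relation. Setting $\mc{P}:=\mc{A}^{(1,4)}*\mc{A}$, equation $(4)$ for $\mc{A}^{(1,4)}$ gives $\mc{P}^T=\mc{P}$, so the assumed symmetry of $\mc{A}^{(1,4)}*\mc{A}*\mc{B}*\mc{B}^T=\mc{P}*\mc{B}*\mc{B}^T$ is equivalent to $\mc{P}*\mc{B}*\mc{B}^T=\mc{B}*\mc{B}^T*\mc{P}$. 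This single relation is the engine of the whole argument.

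Next I would record two identities involving $\mc{B}$ alone. Writing $\mc{F}:=\mc{B}^{(1,4)}*\mc{B}$, equation $(4)$ for $\mc{B}^{(1,4)}$ says $\mc{F}^T=\mc{F}$, equivalently $\mc{B}^T*(\mc{B}^{(1,4)})^T=\mc{F}$. Since $(\mc{F}*\mc{B}^T)^T=\mc{B}*\mc{F}=\mc{B}*\mc{B}^{(1,4)}*\mc{B}=\mc{B}$ by equation $(1)$ for $\mc{B}^{(1,4)}$, we obtain $\mc{F}*\mc{B}^T=\mc{B}^T$, and symmetrically $\mc{B}*\mc{F}=\mc{B}$. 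Multiplying the commutation relation on the right by $(\mc{B}^{(1,4)})^T$ and collapsing the left-hand factor $\mc{B}*\mc{B}^T*(\mc{B}^{(1,4)})^T=\mc{B}*\mc{F}=\mc{B}$ then produces the pivotal identity $\mc{P}*\mc{B}=\mc{B}*\mc{B}^T*\mc{P}*(\mc{B}^{(1,4)})^T$.

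With these in hand the two verifications are short. For equation $(4)$ I substitute the pivotal identity into $\mc{X}*\mc{C}=\mc{B}^{(1,4)}*\mc{P}*\mc{B}$, collapse $\mc{B}^{(1,4)}*\mc{B}=\mc{F}$ and $\mc{F}*\mc{B}^T=\mc{B}^T$, and reach $\mc{X}*\mc{C}=\mc{B}^T*\mc{P}*(\mc{B}^{(1,4)})^T$, whose transpose is $\mc{B}^{(1,4)}*\mc{P}*\mc{B}=\mc{X}*\mc{C}$ (using $\mc{P}^T=\mc{P}$), so $\mc{X}*\mc{C}$ is symmetric. For equation $(1)$ I start from $\mc{C}*\mc{X}*\mc{C}=\mc{A}*\mc{B}*\mc{B}^{(1,4)}*\mc{P}*\mc{B}$, apply the pivotal identity, use $\mc{B}*\mc{B}^{(1,4)}*\mc{B}=\mc{B}$, then move $\mc{P}$ back across $\mc{B}*\mc{B}^T$ with the commutation relation and collapse $\mc{B}*\mc{F}=\mc{B}$, arriving at $\mc{A}*\mc{P}*\mc{B}=\mc{A}*\mc{A}^{(1,4)}*\mc{A}*\mc{B}=\mc{A}*\mc{B}=\mc{C}$. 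Hence $\mc{X}=\mc{B}^{(1,4)}*\mc{A}^{(1,4)}\in(\mc{A}*\mc{B})\{1,4\}$, which is the assertion $(\mc{A}*\mc{B})^{(1,4)}=\mc{B}^{(1,4)}*\mc{A}^{(1,4)}$.

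Part (ii) I would obtain as the transpose-dual of (i). A direct check shows $\mc{Z}\in\mc{A}\{1,3\}$ if and only if $\mc{Z}^T\in\mc{A}^T\{1,4\}$, and in particular $(\mc{A}^{(1,3)})^T$ and $(\mc{B}^{(1,3)})^T$ are $\{1,4\}$-inverses of $\mc{A}^T$ and $\mc{B}^T$; the proof then mirrors the one above with equation $(3)$ and the symmetric idempotents $\mc{A}*\mc{A}^{(1,3)}$, $\mc{B}*\mc{B}^{(1,3)}$ taking over the roles of equation $(4)$ and $\mc{A}^{(1,4)}*\mc{A}$, $\mc{B}^{(1,4)}*\mc{B}$, and the appropriate symmetry hypothesis recast as the corresponding commutation relation. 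I expect the main obstacle to be the bookkeeping in the second paragraph: assembling the commutation relation together with the idempotent-type identities $\mc{F}*\mc{B}^T=\mc{B}^T$ and $\mc{B}*\mc{F}=\mc{B}$ into the pivotal identity in exactly the right form, since every subsequent collapse hinges on it and one must never silently invoke commutativity of the non-commutative ring $\rt$.
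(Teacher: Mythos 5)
Your proof of part (i) is correct and is essentially the paper's own argument: both recast the hypothesis as the commutation relation $\mc{A}^{(1,4)}*\mc{A}*\mc{B}*\mc{B}^T=\mc{B}*\mc{B}^T*\mc{A}^{(1,4)}*\mc{A}$ and then verify conditions $(1)$ and $(4)$ of Definition \ref{defgi} for $\mc{B}^{(1,4)}*\mc{A}^{(1,4)}$; your ``pivotal identity'' merely packages the insert-$(\mc{B}^{(1,4)}*\mc{B})^T$-and-collapse manipulations that the paper performs inline, and every step checks.

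Part (ii) is where there is a genuine gap. The duality $\mc{Z}\in\mc{A}\{1,3\}\Leftrightarrow\mc{Z}^T\in\mc{A}^T\{1,4\}$ is correct, but since $(\mc{A}*\mc{B})^T=\mc{B}^T*\mc{A}^T$, the only way to reach a $\{1,3\}$-inverse of $\mc{A}*\mc{B}$ by this route is to apply part (i) to the pair $(\mc{B}^T,\mc{A}^T)$ with the $\{1,4\}$-inverses $(\mc{B}^{(1,3)})^T$ and $(\mc{A}^{(1,3)})^T$. Part (i) then demands symmetry of
\begin{equation*}
(\mc{B}^{(1,3)})^T*\mc{B}^T*\mc{A}^T*\mc{A}=\mc{B}*\mc{B}^{(1,3)}*\mc{A}^T*\mc{A},
\end{equation*}
i.e.\ that $\mc{B}*\mc{B}^{(1,3)}$ commutes with $\mc{A}^T*\mc{A}$ --- not the printed hypothesis, which is that $\mc{A}*\mc{A}^{(1,3)}$ commutes with $\mc{B}^T*\mc{B}$. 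Your phrase ``the appropriate symmetry hypothesis'' hides exactly this swap of the roles of $\mc{A}$ and $\mc{B}$, so your argument proves a different statement from the one in the theorem. Moreover the discrepancy cannot be patched, because part (ii) as printed is false. Take, as tensors whose frontal slices beyond the first are zero (so that $*$ is ordinary matrix multiplication and $(\cdot)^T$ the matrix transpose),
\begin{equation*}
\mc{A}=\begin{pmatrix}1&0\\0&0\end{pmatrix},\qquad
\mc{A}^{(1,3)}=\begin{pmatrix}1&0\\0&1\end{pmatrix},\qquad
\mc{B}=\begin{pmatrix}1&0\\1&0\end{pmatrix},\qquad
\mc{B}^{(1,3)}=\begin{pmatrix}1/2&1/2\\0&0\end{pmatrix};
\end{equation*}
these are legitimate $\{1,3\}$-inverses, and $\mc{A}*\mc{A}^{(1,3)}*\mc{B}^T*\mc{B}=\begin{pmatrix}2&0\\0&0\end{pmatrix}$ is symmetric. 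Yet $\mc{A}*\mc{B}=\begin{pmatrix}1&0\\0&0\end{pmatrix}$ while $\mc{A}*\mc{B}*\mc{B}^{(1,3)}*\mc{A}^{(1,3)}=\begin{pmatrix}1/2&1/2\\0&0\end{pmatrix}$ is not symmetric, and $\mc{A}*\mc{B}*\mc{B}^{(1,3)}*\mc{A}^{(1,3)}*\mc{A}*\mc{B}=\begin{pmatrix}1/2&0\\0&0\end{pmatrix}\neq\mc{A}*\mc{B}$, so $\mc{B}^{(1,3)}*\mc{A}^{(1,3)}$ is not even a $\{1\}$-inverse of $\mc{A}*\mc{B}$. (The same example shows that the paper's own ``Similarly, one can prove the part (ii)'' cannot be carried out for the stated hypothesis.) What your dual argument actually establishes is the corrected statement: if $\mc{B}*\mc{B}^{(1,3)}*\mc{A}^T*\mc{A}$ is symmetric, then $\mc{B}^{(1,3)}*\mc{A}^{(1,3)}\in(\mc{A}*\mc{B})\{1,3\}$ --- and indeed this corrected hypothesis fails in the example above. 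If you keep the duality approach, you must state and prove that version, not the printed one.
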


\begin{proof}
(i) Let  $(\mc{A}^{(1,4)}* \mc{A}* \mc{B}* \mc{B}^T)^T=\mc{A}^{(1,4)}* \mc{A}* \mc{B}* \mc{B}^T$. Then 
\begin{equation}\label{eq8}
\mc{A}^{(1,4)}* \mc{A} *\mc{B}* \mc{B}^T
= (\mc{A}^{(1,4)}* \mc{A} *\mc{B}* \mc{B}^T)^T
 = \mc{B}* \mc{B}^T*\mc{A}^{(1,4)} * \mc{A}.
\end{equation}
Using \eqref{eq8}, we obtain
\begin{eqnarray*}
\mc{A} *\mc{B} * \mc{B}^{(1,4)} * \mc{A}^{(1,4)} * \mc{A} * \mc{B}&=& \mc{A} *\mc{B} * \mc{B}^{(1,4)} * \mc{A}^{(1,4)} * \mc{A} * \mc{B}*(\mc{B}^{(1,4)}*\mc{B})^T\\
&=&\mc{A} *\mc{B} * \mc{B}^{(1,4)} * \mc{A}^{(1,4)} * \mc{A} * \mc{B}*\mc{B}^T*(\mc{B}^{(1,4)} )^T\\
&=&\mc{A}*\mc{B}*\mc{B}^T*\mc{A}^{(1,4)} * \mc{A} * (\mc{B}^{(1,4)} )^T=\mc{A} * \mc{B} *\mc{B}^T*(\mc{B}^{(1,4)})^T\\
&=&\mc{A} *\mc{B},\mbox{ and }
\end{eqnarray*}
\begin{eqnarray*}
(\mc{B}^{(1,4)} * \mc{A}^{(1,4)} * \mc{A}*
\mc{B})^T&=& \mc{B}^T*\mc{A}^{(1,4)}*\mc{A}* 
(\mc{B}^{(1,4)})^T=\mc{B}^{(1,4)} *\mc{B}*  \mc{B}^T* \mc{A}^{(1,4)} * \mc{A}* 
(\mc{B}^{(1,4)})^T \\
&=& \mc{B}^{(1,4)}*  \mc{A}^{(1,4)} * \mc{A}* \mc{B} * \mc{B}^T *
(\mc{B}^{(1,4)})^T=\mc{B}^{(1,4)}*  \mc{A}^{(1,4)} * \mc{A}* \mc{B}.
  \end{eqnarray*}
 Hence  $\mc{B}^{(1,4)} * \mc{A}^{(1,4)}$ is an $\{1,4\}$-inverse of $\mc{A} * \mc{B}$.
Similarly, one can prove the part $(ii)$.
\end{proof}

The conditions of the above theorem are sufficient but not necessary for the reverse order law. 

\begin{example}
Let $ \mathcal{A}, \mathcal{B} \in \mathbb{R}^{2\times 2 \times 3}$  
 with frontal slices
\begin{eqnarray*}
\mc{A}_{(1)} =
    \begin{pmatrix}
    1 & 3 \\
    0 & 0
    \end{pmatrix},~
\mc{A}_{(2)} =
    \begin{pmatrix}
     2 &  4 \\
     0 & 0
    \end{pmatrix},~~
    \mc{A}_{(3)} =
    \begin{pmatrix}
     5 &  1\\
     0 & 0
    \end{pmatrix},~~
\mc{B}_{(1)} =
    \begin{pmatrix}
    1 & 0\\
    0 & 1
    \end{pmatrix},~
\mc{B}_{(2)}=
    \begin{pmatrix}
     0 & 0\\
     0 & 0
    \end{pmatrix},~~
    \mc{B}_{(3)} =
    \begin{pmatrix}
    0 & 0\\
    0 & 0
    \end{pmatrix}.~
    \end{eqnarray*}
    We can verify that    
\begin{eqnarray*}
\mc{A}^{(1,4)}=
fold\left( 
\begin{bmatrix}
     3/80 & 0\\
     0 & -1/16\\\hline
     -1/16 & 0\\ 
     0 & 11/80\\\hline 
     7/80 & 0\\
     0 & -1/80\\
\end{bmatrix}
\right),~\mc{B}^{(1,4)}
= fold\left( 
\begin{bmatrix}
     1 & 0\\
     0 & 1\\\hline
     0 & 0\\ 
     0 & 0\\\hline 
     0 & 0\\
     0 & 0\\
\end{bmatrix}
\right),~
\mc{B}^{(1,4)}*\mc{A}^{(1,4)} = fold\left( 
\begin{bmatrix}
      3/80 & 0\\
     0 & -1/16\\\hline
     -1/16 & 0\\ 
     0 & 11/80\\\hline 
     7/80 & 0\\
     0 & -1/80\\
\end{bmatrix}
\right) = (\mc{A}*\mc{B})^{(1,4)},
\end{eqnarray*}
and 
\begin{eqnarray*}
fold\left( 
\begin{bmatrix}
     -1/10 & 2/5\\
     0 & 0\\\hline
     9/20 & 1/20\\ 
     0 & 0\\\hline 
     3/20 & 1/20\\
     0 & 0\\
\end{bmatrix}
\right) = \mc{A}^{(1,4)}* \mc{A}* \mc{B}* \mc{B}^T \neq  (\mc{A}^{(1,4)}* \mc{A}* \mc{B}* \mc{B}^T)^T
= fold\left( 
\begin{bmatrix}
          
     -1/10 & 0\\
     2/5 & 0\\\hline
     3/20 & 0\\ 
     1/20 & 0\\\hline 
     9/20 & 0\\
     1/20 & 0\\
\end{bmatrix}
\right)
\end{eqnarray*}
\end{example}

\begin{example}
Let $ \mathcal{A}, \mathcal{B} \in \mathbb{R}^{2\times 2 \times 3}$  
 with frontal slices 
\begin{eqnarray*}
\mc{A}_{(1)} =
    \begin{pmatrix}
    2 & 0 \\
    0 & 0
    \end{pmatrix},~
\mc{A}_{(2)} =
    \begin{pmatrix}
     3 &  0 \\
     0 & 0
    \end{pmatrix},~~
    \mc{A}_{(3)} =
    \begin{pmatrix}
     1 &  0\\
     0 & 0
    \end{pmatrix},~
\mc{B}_{(1)} =
    \begin{pmatrix}
    1 &  2\\
    3 & 0
    \end{pmatrix},~
\mc{B}_{(2)}=
    \begin{pmatrix}
     3 & 4\\
     5 & 0
    \end{pmatrix},~~
    \mc{B}_{(3)} =
    \begin{pmatrix}
    5 &  6\\
    6 & 0
    \end{pmatrix}.
    \end{eqnarray*}
We can verify that   
\begin{eqnarray*}
\mc{A}^{(1,3)}=
fold\left( 
\begin{bmatrix}
     1/18 & 0\\
     0 & 0\\\hline
     -5/18 & 0\\ 
     0 & 0\\\hline 
     7/18 & 0\\
     0 & 0\\
\end{bmatrix}
\right),~\mc{B}^{(1,3)}
= fold\left( 
\begin{bmatrix}
     0 & -3/14\\
     -5/36 & 37/168\\\hline
     0 & 3/14\\ 
     7/36 & -5/24\\\hline 
     0 & 1/14\\
     1/36 & -11/168\\
\end{bmatrix}
\right),
\mc{B}^{(1,3)}*\mc{A}^{(1,3)} = fold\left( 
\begin{bmatrix}
      0 & 0\\
     13/216 & 0\\\hline
     0 & 0\\ 
     13/216 & 0\\\hline 
     0 & 0\\
     -23/216 & 0\\
\end{bmatrix}
\right) = (\mc{A}*\mc{B})^{(1,3)},
\end{eqnarray*}
and 
\begin{eqnarray*}
fold\left( 
\begin{bmatrix}
     105 & 44\\
     0 & 0\\\hline
     86 & 32\\ 
     0 & 0\\\hline 
     86 & 32\\
     0 & 0\\
\end{bmatrix}
\right) = \mc{A}* \mc{A}^{(1,3)}* \mc{B}^T* \mc{B} \neq  (\mc{A}* \mc{A}^{(1,3)}* \mc{B}^T* \mc{B})^T
= fold\left( 
\begin{bmatrix}
    105 & 0\\
     44 & 0\\\hline
     86 & 0\\ 
     32 & 0\\\hline 
     86 & 0\\
     32 & 0\\
\end{bmatrix}
\right).
\end{eqnarray*}
\end{example}

\subsection{Results on the Moore-Penrose Inverse}
To discuss  a characterizations of the Moore-Penrose inverse, we first prove the following  auxiliary results which will be used for proving our main result of the section.

\begin{lemma}\label{lem3.17}
Let $\mc{P},\mc{Q}\in\rt$ be idempotent tensors. If  $\rt*\mc{P}\subseteq\rt*\mc{Q}$ and $\mc{Q}*\rt\subseteq\mc{P}*\rt$, then $\mc{P}=\mc{Q}$.
\end{lemma}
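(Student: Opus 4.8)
The plan is to exploit the fact that each idempotent lies in its own principal ideal, which lets me translate the two ideal inclusions into multiplicative identities linking $\mc{P}$ and $\mc{Q}$. Since $\rt$ is a ring with identity $\mc{I}$, we have $\mc{P}=\mc{I}*\mc{P}\in\rt*\mc{P}$ and $\mc{Q}=\mc{Q}*\mc{I}\in\mc{Q}*\rt$, so membership of each generator in its own principal ideal is guaranteed. The whole argument then hinges on combining such a representation with the relevant idempotency relation.

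First I would use the inclusion $\rt*\mc{P}\subseteq\rt*\mc{Q}$. Because $\mc{P}\in\rt*\mc{P}\subseteq\rt*\mc{Q}$, there is some $\mc{X}\in\rt$ with $\mc{P}=\mc{X}*\mc{Q}$. Right-multiplying by $\mc{Q}$ and invoking the idempotency $\mc{Q}*\mc{Q}=\mc{Q}$ yields
\begin{equation*}
\mc{P}*\mc{Q}=\mc{X}*\mc{Q}*\mc{Q}=\mc{X}*\mc{Q}=\mc{P}.
\end{equation*}
Symmetrically, I would use the inclusion $\mc{Q}*\rt\subseteq\mc{P}*\rt$. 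Because $\mc{Q}\in\mc{Q}*\rt\subseteq\mc{P}*\rt$, there is some $\mc{Y}\in\rt$ with $\mc{Q}=\mc{P}*\mc{Y}$. Left-multiplying by $\mc{P}$ and invoking $\mc{P}*\mc{P}=\mc{P}$ gives
\begin{equation*}
\mc{P}*\mc{Q}=\mc{P}*\mc{P}*\mc{Y}=\mc{P}*\mc{Y}=\mc{Q}.
\end{equation*}

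Combining the two displays immediately gives $\mc{P}=\mc{P}*\mc{Q}=\mc{Q}$, as required. I do not expect any serious obstacle here: the only points that must be verified are that the principal ideals genuinely contain their generators (which holds precisely because $\rt$ has the identity $\mc{I}$) and that the manipulations are legitimate, using only associativity of $*$ together with the two idempotency relations $\mc{P}*\mc{P}=\mc{P}$ and $\mc{Q}*\mc{Q}=\mc{Q}$. As an alternative route, the inclusions could first be recast in terms of annihilators via Proposition \ref{pro1} and Proposition \ref{prop2}, but the direct computation above is shorter and avoids introducing inner inverses of $\mc{P}$ and $\mc{Q}$.
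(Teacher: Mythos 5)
Your proof is correct and is essentially the paper's own argument: both extract $\mc{P}=\mc{X}*\mc{Q}$ and $\mc{Q}=\mc{P}*\mc{Y}$ from the two ideal inclusions and then use idempotency of $\mc{Q}$ and of $\mc{P}$ to obtain $\mc{P}=\mc{P}*\mc{Q}$ and $\mc{Q}=\mc{P}*\mc{Q}$, hence $\mc{P}=\mc{Q}$. The only cosmetic difference is that you invoke the identity $\mc{I}$ to justify $\mc{P}\in\rt*\mc{P}$ and $\mc{Q}\in\mc{Q}*\rt$, whereas idempotency alone already gives $\mc{P}=\mc{P}*\mc{P}\in\rt*\mc{P}$, so even that hypothesis is not needed for the membership step.
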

\begin{proof}
Let $\rt*\mc{P}\subseteq\rt*\mc{Q}$. Then $\mc{P}=\mc{U}*\mc{Q}*\mc{U}$ for some $\mc{U}\in\rt$. Further,  $\mc{P}=\mc{U}*\mc{Q}=\mc{U}*\mc{Q}^2=\mc{P}*\mc{Q}$. \\ 
 From the condition $\mc{Q}*\rt\subseteq\mc{P}*\rt$, we have 
  $\mc{Q}=\mc{P}*\mc{V}=\mc{P}^2*\mc{V}=\mc{P}*\mc{Q}$ for some $\mc{V}\in\rt$.  Thus $\mc{P}=\mc{Q}$.
\end{proof}
\begin{lemma}\label{lem3.18}
Let $\mc{P},\mc{Q}\in\rt$ be symmetric and idempotent tensors. If  $\rt*\mc{P}=\rt*\mc{Q}$ or $\mc{Q}*\rt=\mc{P}*\rt$, then $\mc{P}=\mc{Q}$.
\end{lemma}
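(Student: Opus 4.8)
The plan is to reduce both alternatives to Lemma \ref{lem3.17}, using the transpose as an involutive anti-automorphism of $\rt$ to trade a left-ideal equality for a right-ideal equality (and conversely), with the symmetry of $\mc{P}$ and $\mc{Q}$ doing the essential work.

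First I would record the two properties of the transpose on which the argument rests: it is a bijection of $\rt$ onto itself with $(\mc{A}^T)^T=\mc{A}$, and it reverses products, $(\mc{A}*\mc{B})^T=\mc{B}^T*\mc{A}^T$ (the same anti-homomorphism already invoked in the proof of the $\{1,3\}$-inverse theorem). From these two facts it follows, at the level of sets, that for every $\mc{M}\in\rt$ one has $(\rt*\mc{M})^T=\mc{M}^T*\rt$: transposing a typical element $\mc{Z}*\mc{M}$ produces $\mc{M}^T*\mc{Z}^T$, and as $\mc{Z}$ ranges over $\rt$ so does $\mc{Z}^T$.

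Next, suppose $\rt*\mc{P}=\rt*\mc{Q}$. Applying the transpose to both sides (a bijection preserves set equality) and using symmetry $\mc{P}^T=\mc{P}$, $\mc{Q}^T=\mc{Q}$, the identity above yields $\mc{P}*\rt=\mc{Q}*\rt$. Hence $\rt*\mc{P}\subseteq\rt*\mc{Q}$ and $\mc{Q}*\rt\subseteq\mc{P}*\rt$ hold simultaneously, and these are precisely the hypotheses of Lemma \ref{lem3.17}; therefore $\mc{P}=\mc{Q}$. The alternative $\mc{Q}*\rt=\mc{P}*\rt$ is handled symmetrically: transposing gives $\rt*\mc{P}=\rt*\mc{Q}$, and the same two containments again let Lemma \ref{lem3.17} conclude $\mc{P}=\mc{Q}$.

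The only delicate point is the set-level identity $(\rt*\mc{M})^T=\mc{M}^T*\rt$; once the anti-homomorphism property of the transpose and its bijectivity on $\rt$ are in hand, the remainder reduces mechanically to Lemma \ref{lem3.17}, so I expect no substantive obstacle. Alternatively, one could bypass transposes entirely by passing through Propositions \ref{pro1} and \ref{prop2}, converting the ideal equalities into the annihilator equalities $\textup{lann}(\mc{P})=\textup{lann}(\mc{Q})$ and $\textup{rann}(\mc{P})=\textup{rann}(\mc{Q})$, but the involution argument is shorter and makes the role of symmetry transparent.
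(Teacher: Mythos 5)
Your proof is correct, but it takes a different route from the paper. The paper proves Lemma \ref{lem3.18} directly, without invoking Lemma \ref{lem3.17}: from $\rt*\mc{P}=\rt*\mc{Q}$ it writes $\mc{P}=\mc{U}*\mc{Q}$ and $\mc{Q}=\mc{V}*\mc{P}$, uses idempotency to get the element identities $\mc{P}=\mc{P}*\mc{Q}$ and $\mc{Q}=\mc{Q}*\mc{P}$, and only then applies the transpose to the single element identity $\mc{Q}=\mc{Q}*\mc{P}$, concluding $\mc{Q}=\mc{Q}^T=(\mc{Q}*\mc{P})^T=\mc{P}^T*\mc{Q}^T=\mc{P}*\mc{Q}=\mc{P}$. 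You instead apply the transpose at the level of sets, via $(\rt*\mc{M})^T=\mc{M}^T*\rt$, converting the one-sided ideal equality into the pair of containments $\rt*\mc{P}\subseteq\rt*\mc{Q}$ and $\mc{Q}*\rt\subseteq\mc{P}*\rt$, and then let Lemma \ref{lem3.17} finish. Both proofs pivot on the same ingredients (idempotency, plus symmetry fed through the anti-automorphism $(\mc{A}*\mc{B})^T=\mc{B}^T*\mc{A}^T$), but yours is modular: it exhibits Lemma \ref{lem3.18} as a formal consequence of Lemma \ref{lem3.17} together with the involution, and it isolates the single point where symmetry is used. The paper's version is shorter and self-contained. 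The set-level identity you flag as the delicate point is indeed sound, since the transpose is a bijection of $\rt$ onto itself.

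One caution about your closing aside: the annihilator route via Propositions \ref{pro1} and \ref{prop2} does \emph{not} bypass the transpose. The hypothesis gives you only one ideal equality, hence only one annihilator equality (e.g.\ $\textup{rann}(\mc{P})=\textup{rann}(\mc{Q})$ from $\rt*\mc{P}=\rt*\mc{Q}$ via Proposition \ref{prop2}); from that alone you can extract $\mc{P}=\mc{P}*\mc{Q}$ and $\mc{Q}=\mc{Q}*\mc{P}$, but closing the gap between these two still requires transposing and using symmetry, exactly as in the paper's last line. So the involution is unavoidable, and your main argument remains the right way to package it.
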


\begin{proof}
If $\rt*\mc{P}=\rt*\mc{Q}$. Then $\mc{P}=\mc{U}*\mc{Q}*\mc{U}$ and $\mc{Q}=\mc{V}*\mc{P}$ for some $\mc{U},\mc{V}\in\rt$. This yields 
\begin{center}
  $\mc{P}=\mc{U}*\mc{Q}^2=\mc{P}*\mc{Q}$ and $\mc{Q}=\mc{V}*\mc{P}^2=\mc{Q}*\mc{P}$.
\end{center}
 Now 
  $\mc{Q}=\mc{Q}^T=(\mc{Q}*\mc{P})^T=\mc{P}^T*\mc{Q}^T=\mc{P}*\mc{Q}=\mc{P}$.  
\end{proof}

\begin{theorem}
Let $\mc{A}\in\rt$. If $\mc{X}=\mc{A}^\dagger$, then there exist symmetric idempotents $\mc{P}$, $\mc{Q}\in\rt$ such that
\begin{enumerate}
    \item[(i)]  $\mc{P}*\rt=\mc{A}*\rt$ and $\rt*\mc{Q}=\rt*\mc{A}$.
    \item[(ii)] $\textup{lann}(\mc{A})=\textup{lann}(\mc{P})$  and $\textup{rann}(\mc{A})=\textup{rann}(\mc{Q})$.
\end{enumerate}
\end{theorem}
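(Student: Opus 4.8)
The plan is to take the two natural projector-like tensors attached to the Moore--Penrose inverse, namely $\mc{P} := \mc{A}*\mc{X}$ and $\mc{Q} := \mc{X}*\mc{A}$ with $\mc{X}=\mc{A}^\dagger$, and to show each carries all the required structure. First I would check that $\mc{P}$ and $\mc{Q}$ are symmetric idempotents. Symmetry is immediate from the Penrose equations $(3)$ and $(4)$ of Definition \ref{defgi}, since $(\mc{A}*\mc{X})^T=\mc{A}*\mc{X}$ and $(\mc{X}*\mc{A})^T=\mc{X}*\mc{A}$. Idempotency follows from equation $(1)$: $\mc{P}^2=\mc{A}*\mc{X}*\mc{A}*\mc{X}=(\mc{A}*\mc{X}*\mc{A})*\mc{X}=\mc{A}*\mc{X}=\mc{P}$, and symmetrically $\mc{Q}^2=\mc{X}*\mc{A}*\mc{X}*\mc{A}=\mc{X}*(\mc{A}*\mc{X}*\mc{A})=\mc{X}*\mc{A}=\mc{Q}$.

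For part (i) I would establish the two ideal equalities by double inclusion. Because $\mc{P}=\mc{A}*\mc{X}$, the inclusion $\mc{P}*\rt\subseteq\mc{A}*\rt$ is automatic, while equation $(1)$ rewrites $\mc{A}=(\mc{A}*\mc{X})*\mc{A}=\mc{P}*\mc{A}$, giving $\mc{A}*\rt\subseteq\mc{P}*\rt$; hence $\mc{P}*\rt=\mc{A}*\rt$. The identity $\rt*\mc{Q}=\rt*\mc{A}$ is proved in mirror fashion, using $\mc{Q}=\mc{X}*\mc{A}$ for one inclusion and $\mc{A}=\mc{A}*(\mc{X}*\mc{A})=\mc{A}*\mc{Q}$ for the reverse inclusion.

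For part (ii), rather than arguing directly I would feed the ideal equalities of part (i) into Proposition \ref{pro1} and Proposition \ref{prop2}. Applying Proposition \ref{pro1} to each of $\mc{P}*\rt\subseteq\mc{A}*\rt$ and $\mc{A}*\rt\subseteq\mc{P}*\rt$ yields $\textup{lann}(\mc{A})\subseteq\textup{lann}(\mc{P})$ and $\textup{lann}(\mc{P})\subseteq\textup{lann}(\mc{A})$ respectively, hence $\textup{lann}(\mc{A})=\textup{lann}(\mc{P})$; the right-annihilator equality $\textup{rann}(\mc{A})=\textup{rann}(\mc{Q})$ comes out of Proposition \ref{prop2} applied to $\rt*\mc{Q}=\rt*\mc{A}$ in the same way. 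The only place demanding genuine care is the bookkeeping: the annihilator correspondences in those propositions reverse inclusions, so I must match the $\mc{A}$/$\mc{B}$ (respectively $\mc{C}$/$\mc{D}$) slots to the correct sides before concluding equality. Beyond this there is no hard step, since the whole argument rests on the single idea of choosing $\mc{P}=\mc{A}*\mc{X}$ and $\mc{Q}=\mc{X}*\mc{A}$ and then invoking the already-established annihilator propositions.
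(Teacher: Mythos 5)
Your proposal is correct and follows essentially the same route as the paper: the same choice $\mc{P}=\mc{A}*\mc{X}$, $\mc{Q}=\mc{X}*\mc{A}$, the same double-inclusion argument for part (i), and the same appeal to Propositions \ref{pro1} and \ref{prop2} for part (ii). In fact your write-up is slightly more careful than the paper's, which abbreviates the symmetry/idempotency check as trivial and contains a small slip ($\mc{Q}*\mc{A}$ where $\mc{A}*\mc{Q}$ is meant); your version of that identity is the right one.
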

\begin{proof}
(i) Let $\mc{X}=\mc{A}^\dagger$ and define $\mc{P}=\mc{A}*\mc{X}$ and $\mc{Q}=\mc{X}*\mc{A}$. It is trivial that $\mc{P}$ and $\mc{Q}$ are symmetric and idempotents. Further $\mc{P}*\rt\subseteq\mc{A}*\rt$ and $\rt*\mc{Q}\subseteq\rt*\mc{A}$.  From $\mc{A}=\mc{A}*\mc{X}*\mc{A}=\mc{P}*\mc{A}=\mc{Q}*\mc{A}$, we obtain $\mc{A}*\rt\subseteq \mc{P}*\rt$ and $\rt*\mc{A}\subseteq \rt*\mc{Q}$. Hence  $\mc{P}*\rt=\mc{A}*\rt$ and $\rt*\mc{Q}=\rt*\mc{A}$.\\
(ii)  $\textup{lann}(\mc{A})=\textup{lann}(\mc{P})$  and $\textup{rann}(\mc{A})=\textup{rann}(\mc{Q})$ follows from Proposition \ref{pro1}  and Proposition \ref{prop2}.
 \end{proof}
 \begin{corollary}
 Let $\mc{A}\in\rt$. If there exist symmetric idempotents $\mc{P}$, $\mc{Q}\in\rt$ such that
 $\textup{lann}(\mc{A})=\textup{lann}(\mc{P})$  and $\textup{rann}(\mc{A})=\textup{rann}(\mc{Q})$ then 
 \begin{enumerate}
     \item[(i)] $\mc{A}^\dagger=\mc{Q}*\mc{A}^{(1)}*\mc{P}$;
     \item[(ii)] $\mc{P}$ and $\mc{Q}$ are unique. 
     \end{enumerate}
  \end{corollary}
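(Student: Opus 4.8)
The plan is to first convert the annihilator hypotheses into equalities of one-sided ideals, and then to read off the absorption identities that make $\mc{Q}*\mc{A}^{(1)}*\mc{P}$ behave like the Moore--Penrose inverse. Starting from $\textup{lann}(\mc{A})=\textup{lann}(\mc{P})$, applying Proposition~\ref{pro1} to each of the two inclusions yields $\mc{A}*\rt=\mc{P}*\rt$; symmetrically, $\textup{rann}(\mc{A})=\textup{rann}(\mc{Q})$ together with Proposition~\ref{prop2} gives $\rt*\mc{A}=\rt*\mc{Q}$. Since $\rt$ carries the identity $\mc{I}$ and $\mc{P},\mc{Q}$ are idempotent, the memberships $\mc{A}\in\mc{P}*\rt$ and $\mc{P}\in\mc{A}*\rt$ produce $\mc{P}*\mc{A}=\mc{A}$ and a factorization $\mc{P}=\mc{A}*\mc{S}$ for some $\mc{S}\in\rt$; dually one obtains $\mc{A}*\mc{Q}=\mc{A}$ and $\mc{Q}=\mc{T}*\mc{A}$ for some $\mc{T}\in\rt$.

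Next I would fix any inner inverse $\mc{A}^{(1)}$, set $\mc{X}=\mc{Q}*\mc{A}^{(1)}*\mc{P}$, and verify the four defining equations of Definition~\ref{defgi}. Equation $(1)$ is immediate from $\mc{P}*\mc{A}=\mc{A}$ and $\mc{A}*\mc{Q}=\mc{A}$, since $\mc{A}*\mc{X}*\mc{A}=\mc{A}*\mc{A}^{(1)}*\mc{A}=\mc{A}$. The two identities that do the real work are $\mc{A}*\mc{A}^{(1)}*\mc{P}=\mc{A}*\mc{A}^{(1)}*\mc{A}*\mc{S}=\mc{A}*\mc{S}=\mc{P}$ and $\mc{Q}*\mc{A}^{(1)}*\mc{A}=\mc{T}*\mc{A}*\mc{A}^{(1)}*\mc{A}=\mc{T}*\mc{A}=\mc{Q}$, which follow from the factorizations of $\mc{P}$ and $\mc{Q}$. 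These give $\mc{A}*\mc{X}=\mc{A}*\mc{A}^{(1)}*\mc{P}=\mc{P}$ and $\mc{X}*\mc{A}=\mc{Q}*\mc{A}^{(1)}*\mc{A}=\mc{Q}$, so equations $(3)$ and $(4)$ reduce to the assumed symmetry of $\mc{P}$ and $\mc{Q}$; and $\mc{X}*\mc{A}*\mc{X}=\mc{Q}*\mc{A}^{(1)}*\mc{A}*\mc{A}^{(1)}*\mc{P}=\mc{Q}*\mc{A}^{(1)}*\mc{P}=\mc{X}$ settles $(2)$. Thus $\mc{X}$ satisfies all four Penrose equations, and by uniqueness of the Moore--Penrose inverse, $\mc{A}^{\dagger}=\mc{Q}*\mc{A}^{(1)}*\mc{P}$, which is (i).

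For (ii) I would show that the hypotheses pin down $\mc{P}$ and $\mc{Q}$ completely. If $\mc{P}_1,\mc{P}_2$ are symmetric idempotents with $\textup{lann}(\mc{A})=\textup{lann}(\mc{P}_1)=\textup{lann}(\mc{P}_2)$, the ideal computation of the first paragraph gives $\mc{P}_1*\rt=\mc{A}*\rt=\mc{P}_2*\rt$, whence Lemma~\ref{lem3.18} forces $\mc{P}_1=\mc{P}_2$. The identical argument using Proposition~\ref{prop2} and $\rt*\mc{Q}_1=\rt*\mc{A}=\rt*\mc{Q}_2$ yields $\mc{Q}_1=\mc{Q}_2$, so both are unique (indeed $\mc{P}=\mc{A}*\mc{A}^{\dagger}$ and $\mc{Q}=\mc{A}^{\dagger}*\mc{A}$).

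The main obstacle is equation $(2)$: a direct expansion of $\mc{X}*\mc{A}*\mc{X}$ would appear to require $\mc{A}^{(1)}$ to be an outer inverse as well, which fails for a generic inner inverse. The resolution is the absorption identity $\mc{Q}*\mc{A}^{(1)}*\mc{A}=\mc{Q}$ (and its mirror $\mc{A}*\mc{A}^{(1)}*\mc{P}=\mc{P}$), which comes precisely from the factorizations $\mc{Q}=\mc{T}*\mc{A}$ and $\mc{P}=\mc{A}*\mc{S}$ and holds for \emph{every} inner inverse; locating and deploying these identities is the crux of the argument, and it is also what makes the formula independent of the choice of $\mc{A}^{(1)}$.
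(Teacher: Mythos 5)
Your proof is correct and follows essentially the same route as the paper: both convert the annihilator hypotheses into the ideal equalities $\mc{A}*\rt=\mc{P}*\rt$ and $\rt*\mc{A}=\rt*\mc{Q}$ via Propositions~\ref{pro1} and~\ref{prop2}, derive the absorption identities $\mc{A}*\mc{A}^{(1)}*\mc{P}=\mc{P}$, $\mc{Q}*\mc{A}^{(1)}*\mc{A}=\mc{Q}$, $\mc{P}*\mc{A}=\mc{A}=\mc{A}*\mc{Q}$, verify the four Penrose equations for $\mc{Q}*\mc{A}^{(1)}*\mc{P}$, and settle uniqueness with Lemma~\ref{lem3.18}. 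Your write-up is in fact slightly more explicit than the paper's (it exhibits the factorizations $\mc{P}=\mc{A}*\mc{S}$, $\mc{Q}=\mc{T}*\mc{A}$ that justify the absorption identities, which the paper asserts without derivation), but the argument is the same.
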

 \begin{proof}
 (i) Let  $\textup{lann}(\mc{A})=\textup{lann}(\mc{P})$  and $\textup{rann}(\mc{A})=\textup{rann}(\mc{Q})$. Then by Proposition \ref{pro1}  and Proposition \ref{prop2}, we obtain  $\mc{A}*\rt=\mc{P}*\rt$ and $\rt*\mc{A}=\rt*\mc{Q}$. Using these  conditions, we obtain 
 \begin{center}
     $\mc{A}=\mc{P}*\mc{A}=\mc{A}*\mc{Q},~ \mc{P}=\mc{A}*\mc{A}^{(1)}*\mc{P}$ and $\mc{Q}=\mc{Q}*\mc{A}^{(1)}*\mc{A}$.
 \end{center}
 Let $\mc{Y} =\mc{Q}*\mc{A}^{(1)}*\mc{P}$. Then $\mc{A}*\mc{Y}*\mc{A}=\mc{A}*\mc{Q}*\mc{A}^{(1)}*\mc{P}*\mc{A}=\mc{A}=\mc{A}*\mc{A}^{(1)}*\mc{A}=\mc{A}$,
 \begin{center}
     $\mc{Y}*\mc{A}*\mc{Y}=\mc{Q}*\mc{A}^{(1)}*\mc{P}*\mc{A}*\mc{Q}*\mc{A}^{(1)}*\mc{P}=\mc{Q}*\mc{A}^{(1)}*\mc{P}=\mc{Y}$,\\
    $\mc{A}*\mc{Y}=\mc{A}*\mc{Q}*\mc{A}^{(1)}*\mc{P}=\mc{A}*\mc{A}^{(1)}*\mc{P}=\mc{P}=\mc{P}^T=(\mc{A}*\mc{Y})^T$, and\\  
    $\mc{Y}*\mc{A}=\mc{Q}*\mc{A}^{(1)}*\mc{P}*\mc{A}=\mc{Q}*\mc{A}^{(1)}*\mc{A}=\mc{Q}=\mc{Q}^T=(\mc{Y}*\mc{A})^T$.
 \end{center}
 Hence $\mc{Y}=\mc{Q}*\mc{A}^{(1)}*\mc{P}$ is the Moore-Penrose inverse of $\mc{A}$.\\
 (ii) The uniqueness of $\mc{P}$ and $\mc{Q}$ are follows from  Lemma \ref{lem3.17} and \ref{lem3.18}.
 \end{proof}

The following result follows from the definition of the Moore-Penrose inverse.

\begin{theorem}
Let $\rt$ be an associative ring with $\mc{I}$. Let $\mc{A}\in\rt$. Then the following statements are true.
\begin{enumerate}
    \item[(i)] If $\mc{A}$ is symmetric and idempotent, then $\mc{A}^{\dg} =
\mc{A}$.
\item[(ii)] $\mc{A} * \mc{A}^\dg$, $\mc{A}^\dg *\mc{A}$, $\mc{I}
-\mc{A}*\mc{A}^\dg$ and
 $\mc{I} - \mc{A}^\dg *\mc{A}$ are all idempotent.
 \item[(iii)]  $\mc{A}^{\dg} = \mc{A}^T$ if and only if $\mc{A}*\mc{A}^T * \mc{A}=\mc{A}$.
\end{enumerate}
\end{theorem}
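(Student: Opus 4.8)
The plan is to prove each of the three statements directly from the defining equations $(1)$--$(4)$ of the Moore-Penrose inverse in Definition \ref{defgi}, using that $\rt$ is an associative ring with identity $\mc{I}$ and involution $(\cdot)^T$.

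\textbf{Part (i).} Suppose $\mc{A}$ is symmetric and idempotent, so $\mc{A}^T=\mc{A}$ and $\mc{A}*\mc{A}=\mc{A}$. First I would verify that $\mc{X}=\mc{A}$ satisfies all four Penrose equations. Equation $(1)$ reads $\mc{A}*\mc{A}*\mc{A}=\mc{A}$, which follows by applying idempotency twice. Equation $(2)$ is identical by symmetry of the roles of $\mc{A}$ and $\mc{X}$. For $(3)$ and $(4)$, note $(\mc{A}*\mc{A})^T=\mc{A}^T=\mc{A}=\mc{A}*\mc{A}$, so both symmetry conditions hold. Since the Moore-Penrose inverse is unique, $\mc{A}^\dg=\mc{A}$.

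\textbf{Part (ii).} Here I would use equations $(1)$ and $(2)$ together with the symmetry conditions $(3)$ and $(4)$. For $\mc{A}*\mc{A}^\dg$, I compute $(\mc{A}*\mc{A}^\dg)*(\mc{A}*\mc{A}^\dg)=\mc{A}*(\mc{A}^\dg*\mc{A}*\mc{A}^\dg)=\mc{A}*\mc{A}^\dg$ using $(2)$, so it is idempotent; the argument for $\mc{A}^\dg*\mc{A}$ is symmetric using $(1)$. For the complementary projectors, idempotency of $\mc{P}$ gives $(\mc{I}-\mc{P})*(\mc{I}-\mc{P})=\mc{I}-2\mc{P}+\mc{P}*\mc{P}=\mc{I}-\mc{P}$, so $\mc{I}-\mc{A}*\mc{A}^\dg$ and $\mc{I}-\mc{A}^\dg*\mc{A}$ are idempotent as well.

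\textbf{Part (iii).} This is the only nontrivial equivalence. For the forward direction, if $\mc{A}^\dg=\mc{A}^T$, then equation $(1)$ immediately gives $\mc{A}*\mc{A}^T*\mc{A}=\mc{A}*\mc{A}^\dg*\mc{A}=\mc{A}$. The harder converse is the main obstacle: assuming $\mc{A}*\mc{A}^T*\mc{A}=\mc{A}$, I must show $\mc{X}=\mc{A}^T$ satisfies all four Penrose equations, and then invoke uniqueness. Equation $(1)$ is the hypothesis. For $(3)$, $(\mc{A}*\mc{A}^T)^T=\mc{A}*\mc{A}^T$ holds since the involution is an anti-automorphism and $(\mc{A}^T)^T=\mc{A}$; likewise $(\mc{A}^T*\mc{A})^T=\mc{A}^T*\mc{A}$ gives $(4)$. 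The delicate step is establishing the outer condition $(2)$, namely $\mc{A}^T*\mc{A}*\mc{A}^T=\mc{A}^T$. I would obtain this by transposing the hypothesis: applying $(\cdot)^T$ to $\mc{A}*\mc{A}^T*\mc{A}=\mc{A}$ and using that $(\cdot)^T$ reverses products yields $\mc{A}^T*\mc{A}*\mc{A}^T=\mc{A}^T$, which is exactly $(2)$ with $\mc{X}=\mc{A}^T$. With all four equations verified, uniqueness of the Moore-Penrose inverse forces $\mc{A}^\dg=\mc{A}^T$.
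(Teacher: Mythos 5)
Your proof is correct, and it follows exactly the route the paper intends: the paper states this theorem without proof, remarking only that it ``follows from the definition of the Moore-Penrose inverse,'' and your argument is precisely that direct verification (checking the four Penrose equations for $\mc{A}$ in part (i) and for $\mc{A}^T$ in part (iii), then invoking uniqueness, with part (ii) being the standard idempotency computation). Nothing is missing; in particular your key step in (iii) --- transposing $\mc{A}*\mc{A}^T*\mc{A}=\mc{A}$ to get the outer equation, using that the transpose reverses $t$-products --- is exactly the right observation.
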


A characterization of the Moore-Penrose inverse is given by the following: 

\begin{theorem}
Let $\mc{A}, \mc{Y} \in \rt$. The following  are equivalent:   
\begin{enumerate}
 \setlength{\parskip}{0pt}
    \item[(i)] $\mc{A}$ is the Moore-Penrose invertiable and $\mc{Y} = \mc{A}^\dg$.
    \item[(ii)] $\mc{A}*\mc{Y}*\mc{A} = \mc{A}$, $\mc{A}*\rt = \mc{A}^T*\rt$ and $\rt* \mc{Y} = \rt *\mc{A}^T$.
    \item[(ii)]  $\mc{A}*\mc{Y}*\mc{A} = \mc{A}$, $\textup{lann}(\mc{Y}) = \textup{lann}(\mc{A}^T)$ and $\textup{rann}(\mc{Y}) = \textup{rann}(\mc{A}^T)$.
        \item[(iv)]  $\mc{A}*\mc{Y}*\mc{A} = \mc{A}$, $\mc{Y}*\rt = \mc{A}^T*\rt$ and $\rt *\mc{Y} = \rt* \mc{A}^T$.
    \item[(v)]  $\mc{A}*\mc{Y}*\mc{A} = \mc{A}$, $\textup{lann}(\mc{A}^T) \subseteq \textup{lann}(\mc{Y})$ and $\textup{rann}(\mc{A}^T) \subseteq \textup{rann}(\mc{Y})$.
\end{enumerate}
\end{theorem}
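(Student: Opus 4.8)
The plan is to prove the five statements equivalent through a single cycle, using Propositions \ref{pro1} and \ref{prop2} as the engine that freely converts a right-ideal identity into a left-annihilator identity and a left-ideal identity into a right-annihilator identity. Applying Proposition \ref{pro1} to both inclusions contained in $\mc{Y}*\rt = \mc{A}^T*\rt$ gives exactly $\textup{lann}(\mc{Y}) = \textup{lann}(\mc{A}^T)$, and Proposition \ref{prop2} turns $\rt*\mc{Y} = \rt*\mc{A}^T$ into $\textup{rann}(\mc{Y}) = \textup{rann}(\mc{A}^T)$; since the defining equation $\mc{A}*\mc{Y}*\mc{A}=\mc{A}$ is common to (ii)--(v), this immediately yields (iii) $\Leftrightarrow$ (iv), while (iv) $\Rightarrow$ (v) is trivial (equalities force the one-sided inclusions) and (ii) is tied to the same family by the identical correspondence. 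Thus the real content reduces to two implications: the forward step (i) $\Rightarrow$ (iv) and the closing step (v) $\Rightarrow$ (i).

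For (i) $\Rightarrow$ (iv) I would compute directly from the four Penrose equations. Writing $\mc{Y}=\mc{Y}*\mc{A}*\mc{Y}$ and replacing $\mc{Y}*\mc{A}$ by $(\mc{Y}*\mc{A})^T=\mc{A}^T*\mc{Y}^T$ (symmetry of $\mc{Y}*\mc{A}$) gives $\mc{Y}=\mc{A}^T*\mc{Y}^T*\mc{Y}\in\mc{A}^T*\rt$; transposing $\mc{A}=\mc{A}*\mc{Y}*\mc{A}$ and again using $\mc{A}^T*\mc{Y}^T=\mc{Y}*\mc{A}$ gives $\mc{A}^T=\mc{Y}*\mc{A}*\mc{A}^T\in\mc{Y}*\rt$, so $\mc{Y}*\rt=\mc{A}^T*\rt$. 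The mirror computation using the symmetry of $\mc{A}*\mc{Y}$ yields $\rt*\mc{Y}=\rt*\mc{A}^T$. This establishes (iv), and hence (iii), (v) and (ii) via the correspondence above.

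The closing step (v) $\Rightarrow$ (i) is where the main obstacle lies, because (v) supplies only the one-sided inclusions $\mc{Y}*\rt\subseteq\mc{A}^T*\rt$ and $\rt*\mc{Y}\subseteq\rt*\mc{A}^T$ (via Propositions \ref{pro1}, \ref{prop2}), equivalently $\mc{Y}=\mc{A}^T*\mc{U}=\mc{V}*\mc{A}^T$ for some $\mc{U},\mc{V}$, together with the single inner-inverse equation; from this alone I must recover both symmetry conditions $(\mc{A}*\mc{Y})^T=\mc{A}*\mc{Y}$, $(\mc{Y}*\mc{A})^T=\mc{Y}*\mc{A}$ and the outer condition $\mc{Y}*\mc{A}*\mc{Y}=\mc{Y}$. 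The key device I would use is a symmetrization of the idempotent $\mc{E}:=\mc{A}*\mc{Y}=\mc{A}*\mc{V}*\mc{A}^T$: substituting $\mc{Y}=\mc{V}*\mc{A}^T$ into $\mc{A}*\mc{Y}*\mc{A}=\mc{A}$ gives $\mc{A}*\mc{V}*\mc{A}^T*\mc{A}=\mc{A}$, and feeding this into $\mc{E}*\mc{E}^T=(\mc{A}*\mc{V}*\mc{A}^T)*(\mc{A}*\mc{V}^T*\mc{A}^T)$ collapses the middle factor to give $\mc{E}*\mc{E}^T=\mc{E}^T$; since $\mc{E}*\mc{E}^T$ is symmetric, this forces $\mc{E}=\mc{E}^T$, i.e. Penrose equation $(3)$. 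The analogous argument with $\mc{F}:=\mc{Y}*\mc{A}=\mc{A}^T*\mc{U}*\mc{A}$ gives $\mc{F}^T*\mc{F}=\mc{F}^T$ and hence $(4)$. Equation $(2)$ then follows quickly: transposing $\mc{A}=\mc{A}*\mc{Y}*\mc{A}$ and using $\mc{Y}=\mc{V}*\mc{A}^T$ yields $\mc{Y}=\mc{Y}*\mc{Y}^T*\mc{A}^T$, so with the now-established symmetry of $\mc{A}*\mc{Y}$ one gets $\mc{Y}*\mc{A}*\mc{Y}=\mc{Y}*(\mc{A}*\mc{Y})^T=\mc{Y}*\mc{Y}^T*\mc{A}^T=\mc{Y}$. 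This verifies all four equations, so $\mc{Y}=\mc{A}^\dagger$, closing the cycle. I expect the implication $\mc{E}*\mc{E}^T=\mc{E}^T\Rightarrow\mc{E}=\mc{E}^T$ to be the one genuinely non-obvious step; everything else is bookkeeping with the ideal--annihilator dictionary.
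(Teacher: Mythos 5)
Your proposal is correct, and its global architecture matches the paper's: a cycle in which Propositions \ref{pro1} and \ref{prop2} shuttle between ideal and annihilator conditions, the implication from (i) to the ideal equalities is a direct computation with the four Penrose equations (your computation is essentially the paper's, producing $\mc{Y}=\mc{A}^T*\mc{Y}^T*\mc{Y}$, $\mc{A}^T=\mc{Y}*\mc{A}*\mc{A}^T$ and their mirrors), and the crux is $(v)\Rightarrow(i)$. Where you genuinely differ is in how that crux is executed. The paper never converts the annihilator inclusions of $(v)$ into factorizations: it transposes $\mc{A}*\mc{Y}*\mc{A}=\mc{A}$ to get $\mc{I}-\mc{Y}^T*\mc{A}^T\in\textup{rann}(\mc{A}^T)\subseteq\textup{rann}(\mc{Y})$ and $\mc{I}-\mc{A}^T*\mc{Y}^T\in\textup{lann}(\mc{A}^T)\subseteq\textup{lann}(\mc{Y})$, which yields the explicit identities $\mc{Y}=\mc{Y}*\mc{Y}^T*\mc{A}^T$ and $\mc{Y}=\mc{A}^T*\mc{Y}^T*\mc{Y}$; symmetry of $\mc{A}*\mc{Y}$ and $\mc{Y}*\mc{A}$ is then immediate, because these products become $(\mc{A}*\mc{Y})*(\mc{A}*\mc{Y})^T$ and $(\mc{Y}*\mc{A})^T*(\mc{Y}*\mc{A})$, which are manifestly symmetric. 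You instead pass through the propositions to obtain abstract factorizations $\mc{Y}=\mc{A}^T*\mc{U}=\mc{V}*\mc{A}^T$ and then need the separate symmetrization step $\mc{E}*\mc{E}^T=\mc{E}^T\Rightarrow\mc{E}=\mc{E}^T$, which is valid (transpose both sides). The paper's route is shorter, produces the factorizations with explicit witnesses $\mc{U}=\mc{Y}^T*\mc{Y}$, $\mc{V}=\mc{Y}*\mc{Y}^T$, and avoids leaning on the ``hard'' direction of Proposition \ref{pro1}, whose proof requires an inner inverse of the tensor generating the larger ideal (here $\mc{A}^T$, which is regular via $\mc{Y}^T$, so your use is legitimate); your route isolates a clean, reusable symmetrization lemma. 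Two small cautions: your claim that the dictionary ``immediately'' gives $(iii)\Leftrightarrow(iv)$ quietly uses an inner inverse of $\mc{Y}$, which is not yet known at that stage — the harmless repair is to route $(iii)\Rightarrow(v)\Rightarrow(i)\Rightarrow(iv)$ (the paper's chain $(iii)\Rightarrow(iv)$ has the same glossiness); and you are implicitly reading condition (ii) as $\mc{Y}*\rt=\mc{A}^T*\rt$, which is what the paper's own proof of $(i)\Rightarrow(ii)$ establishes — as literally printed, $\mc{A}*\rt=\mc{A}^T*\rt$ is false already in the matrix case, so your charitable reading is the right one.
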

\begin{proof}
$(i) \Rightarrow (ii)$. Using the definition of the Moore-Penrose inverse of a tensor we can write 
\begin{equation*}
   \mc{A}^T = \mc{Y}*\mc{A}*\mc{A}^T=\mc{A}^T*\mc{A}*\mc{Y} \textnormal{~~~and~~~} \mc{Y} = \mc{A}^T*\mc{Y}^T*\mc{Y}= \mc{Y}*\mc{Y}^T*\mc{A}.
\end{equation*}
Thus $ \mc{Y}*\rt = \mc{A}^T*\rt \textnormal{~~~and~~~}\rt*\mc{Y} = \rt *\mc{A}^T$.\\
$(ii) \Rightarrow (iii) \Rightarrow (iv) \Rightarrow (v).$ It follows from  proposition \ref{pro1} and proposition \ref{prop2}.\\
$(v) \Rightarrow (i)$.  From  $\mc{A}^T*\mc{Y}^T*\mc{A}^T= \mc{A}^T$, we obtain
\begin{equation*}
    (\mc{I}-\mc{Y}^T*\mc{A}^T) \in \textup{rann}(\mc{A}^T) \subseteq \textup{rann}(X)\textnormal{~~~and~~~}  (\mc{I}-\mc{A}^T*\mc{Y}^T) \in \textup{lann}(\mc{A}^T) \subseteq \textup{lann}(\mc{Y}).
\end{equation*}
Thus $\mc{Y}= \mc{Y}*\mc{Y}^T*\mc{A}^T$ and $\mc{Y}= \mc{A}*\mc{Y}^T*\mc{Y}.$ This yields $\mc{A}*\mc{Y} = \mc{A}*\mc{Y}*(\mc{A}*\mc{Y})^T$ and  $\mc{Y}*\mc{A} = (\mc{Y}*\mc{A})^T*\mc{Y}*\mc{A}$. Therefore, $\mc{A}*\mc{Y}$ and $\mc{Y}*\mc{A}$ are symmetric. Further, $\mc{Y}*\mc{A}*\mc{Y} = \mc{Y}*(\mc{A}*\mc{Y})^T = \mc{Y}*\mc{Y}^T*\mc{A}^T=\mc{Y}$. Hence $\mc{A}^\dg=\mc{Y}$.
This completes the proof.
\end{proof}

\begin{proposition}
Let  $\mc{A} \in \rt$
 and $\mc{A}*\mc{A}^\dg = \mc{A}^\dg *\mc{A}$. Then 
\begin{enumerate}
 \setlength{\parskip}{0pt}
\item[(i)] there exists a
$\mc{X} \in\rt$ such that $\mc{A}*\mc{X} = \mc{A}^T$;
\item[(ii)] there exists a
 $\mc{Y} \in \rt$
 such that $\mc{A}^T *\mc{Y} = \mc{A}$.
\end{enumerate}
\end{proposition}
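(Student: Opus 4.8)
The plan is to reduce both parts to manipulations of the single tensor $\mc{E}=\mc{A}*\mc{A}^{\dagger}=\mc{A}^{\dagger}*\mc{A}$, whose two expressions agree exactly by the hypothesis. By conditions $(3)$ and $(4)$ of Definition \ref{defgi}, $\mc{E}$ is symmetric, $\mc{E}^T=\mc{E}$; by condition $(1)$ it absorbs $\mc{A}$ on either side, $\mc{E}*\mc{A}=\mc{A}*\mc{A}^{\dagger}*\mc{A}=\mc{A}$ and $\mc{A}*\mc{E}=\mc{A}*\mc{A}^{\dagger}*\mc{A}=\mc{A}$. Transposing these two identities and using $\mc{E}^T=\mc{E}$ together with the reversal law $(\mc{P}*\mc{Q})^T=\mc{Q}^T*\mc{P}^T$ (already employed in the excerpt) gives the companion identities $\mc{A}^T*\mc{E}=\mc{A}^T$ and $\mc{E}*\mc{A}^T=\mc{A}^T$. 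These four absorption rules are all that the proof requires.

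For part (i) I would start from $\mc{E}*\mc{A}^T=\mc{A}^T$ and substitute the representation $\mc{E}=\mc{A}*\mc{A}^{\dagger}$, which is legitimate precisely because of the commutativity hypothesis. This gives $\mc{A}*\mc{A}^{\dagger}*\mc{A}^T=\mc{A}^T$, that is $\mc{A}*(\mc{A}^{\dagger}*\mc{A}^T)=\mc{A}^T$, so the tensor $\mc{X}=\mc{A}^{\dagger}*\mc{A}^T$ satisfies $\mc{A}*\mc{X}=\mc{A}^T$.

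For part (ii) I would start instead from $\mc{E}*\mc{A}=\mc{A}$ and rewrite the leading factor using the symmetric representation $\mc{E}=\mc{E}^T=(\mc{A}^{\dagger}*\mc{A})^T=\mc{A}^T*(\mc{A}^{\dagger})^T$, valid again by the hypothesis. This yields $\mc{A}^T*(\mc{A}^{\dagger})^T*\mc{A}=\mc{A}$, that is $\mc{A}^T*\big((\mc{A}^{\dagger})^T*\mc{A}\big)=\mc{A}$, so $\mc{Y}=(\mc{A}^{\dagger})^T*\mc{A}$ is the required tensor.

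The arithmetic is short, and the only genuine difficulty is order-bookkeeping in the non-commutative ring. The standard Moore-Penrose identities always present the transpose on the side opposite to the one we want (for example $\mc{A}^T*\mc{A}*\mc{A}^{\dagger}=\mc{A}^T$ has $\mc{A}^T$ as the \emph{left} factor, whereas part (i) needs $\mc{A}$ there); it is exactly the hypothesis $\mc{A}*\mc{A}^{\dagger}=\mc{A}^{\dagger}*\mc{A}$ that lets us swap the two factorizations of $\mc{E}$ and so move the desired factor into leading position. I would therefore apply the reversal law and the symmetry $\mc{E}^T=\mc{E}$ carefully at each substitution, since that is where an order error would be easiest to make.
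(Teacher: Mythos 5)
Your proof is correct and is essentially the paper's own argument: both use the hypothesis to swap $\mc{A}*\mc{A}^{\dagger}$ and $\mc{A}^{\dagger}*\mc{A}$ inside $\mc{A}*\mc{A}^{\dagger}*\mc{A}=\mc{A}$, then apply the symmetry conditions $(3)$–$(4)$ to transpose, arriving at exactly the same witnesses $\mc{X}=\mc{A}^{\dagger}*\mc{A}^T$ and $\mc{Y}=(\mc{A}^{\dagger})^T*\mc{A}$. Your packaging of the computation through the single tensor $\mc{E}=\mc{A}*\mc{A}^{\dagger}=\mc{A}^{\dagger}*\mc{A}$ is only a cosmetic reorganization of the paper's chain of identities.
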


\begin{proof}
 Let $\mc{A}*\mc{A}^\dg = \mc{A}^\dg *\mc{A}$. Then
  $\mc{A}^T=(\mc{A}*\mc{A}^\dagger*\mc{A})^T=(\mc{A}*\mc{A}*\mc{A}^\dagger)^T=\mc{A}*\mc{A}^\dagger*\mc{A}^T=\mc{A}*\mc{X}$,   where $\mc{X}=\mc{A}^\dagger*\mc{A}^T\in\rt$. 
  Similarly, the second part follows from 
    $\mc{A}=\mc{A}*\mc{A}^\dagger*\mc{A}=\mc{A}^\dagger*\mc{A}*\mc{A}=\mc{A}^T*(\mc{A}^\dagger)^T*\mc{A}=\mc{A}^T*\mc{Y}$,  where $\mc{Y}=(\mc{A}^\dagger)^T*\mc{A}$.
\end{proof}

It is worth mentioning that Liang and Zheng explored in \cite{liang2019} some identities for Moore-Penrose inverse of a tensor. Our next result discusses identities over a ring.

\begin{theorem}\label{th3.31}
Let $\mc{A}\in \rt$. Then the following statements are true.
\begin{enumerate}
 \setlength{\parskip}{0pt}
    \item[(i)] $(\mc{A}^T * \mc{A})^{\dg}=\mc{A}^{\dg}*({\mc{A}^T})^{\dg}$ and $(\mc{A} * \mc{A}^T)^{\dg}=(\mc{A}^T)^{\dg}*\mc{A}^{\dg}.$
    \item[(ii)] $\mc{A}^{\dg} = ({\mc{A}^T}*{\mc{A})^{\dg}}*\mc{A}^T = \mc{A}^T*(\mc{A}*{\mc{A}^T)^{\dg}}$.
\end{enumerate}
  \end{theorem}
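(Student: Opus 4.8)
The plan is to reduce both parts to the four defining Penrose equations (1)--(4) of Definition~\ref{defgi} together with the uniqueness of the Moore-Penrose inverse, exploiting the transpose anti-homomorphism $(\mc{U}*\mc{V})^T=\mc{V}^T*\mc{U}^T$ that is available for the $t$-product transpose (and is used repeatedly earlier in the paper). First I would record the auxiliary fact $(\mc{A}^T)^{\dg}=(\mc{A}^{\dg})^T$: writing $\mc{X}=\mc{A}^{\dg}$ and transposing each of the four equations satisfied by $\mc{X}$, the element $\mc{X}^T$ is seen to satisfy (1)--(4) relative to $\mc{A}^T$, so by uniqueness $\mc{X}^T=(\mc{A}^T)^{\dg}$. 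Next I would establish the two working identities
\begin{equation*}
\mc{A}^T*\mc{A}*\mc{A}^{\dg}=\mc{A}^T \mbox{ \quad and \quad } (\mc{A}^{\dg})^T*\mc{A}^T*\mc{A}=\mc{A},
\end{equation*}
the first by using that $\mc{A}*\mc{A}^{\dg}$ is symmetric (equation (3)), so that $\mc{A}^T*\mc{A}*\mc{A}^{\dg}=\mc{A}^T*(\mc{A}*\mc{A}^{\dg})^T=(\mc{A}*\mc{A}^{\dg}*\mc{A})^T=\mc{A}^T$, and the second by transposing the first.

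For part (i), I would set $\mc{Z}=\mc{A}^{\dg}*(\mc{A}^T)^{\dg}=\mc{A}^{\dg}*(\mc{A}^{\dg})^T$ and verify directly that $\mc{Z}$ is the Moore-Penrose inverse of $\mc{B}=\mc{A}^T*\mc{A}$. Writing $\mc{X}=\mc{A}^{\dg}$, the two working identities collapse the relevant products: grouping $\mc{B}*\mc{Z}*\mc{B}=(\mc{A}^T*\mc{A}*\mc{X})*(\mc{X}^T*\mc{A}^T*\mc{A})=\mc{A}^T*\mc{A}=\mc{B}$ gives (1); $\mc{Z}*\mc{B}*\mc{Z}=\mc{X}*(\mc{X}^T*\mc{A}^T*\mc{A})*\mc{X}*\mc{X}^T=\mc{X}*\mc{A}*\mc{X}*\mc{X}^T=\mc{X}*\mc{X}^T=\mc{Z}$ gives (2); and both $\mc{B}*\mc{Z}$ and $\mc{Z}*\mc{B}$ reduce to $\mc{X}*\mc{A}$, which is symmetric by equation (4), giving (3) and (4). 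Uniqueness then yields $(\mc{A}^T*\mc{A})^{\dg}=\mc{A}^{\dg}*(\mc{A}^T)^{\dg}$. The companion identity $(\mc{A}*\mc{A}^T)^{\dg}=(\mc{A}^T)^{\dg}*\mc{A}^{\dg}$ follows by applying what was just proved to $\mc{A}^T$ in place of $\mc{A}$, using $(\mc{A}^T)^T=\mc{A}$ and the auxiliary fact.

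For part (ii), I would substitute the formula from part (i): $(\mc{A}^T*\mc{A})^{\dg}*\mc{A}^T=\mc{A}^{\dg}*(\mc{A}^{\dg})^T*\mc{A}^T=\mc{A}^{\dg}*(\mc{A}*\mc{A}^{\dg})^T=\mc{A}^{\dg}*\mc{A}*\mc{A}^{\dg}=\mc{A}^{\dg}$, where the third equality uses symmetry of $\mc{A}*\mc{A}^{\dg}$ and the last uses equation (2). The second formula $\mc{A}^{\dg}=\mc{A}^T*(\mc{A}*\mc{A}^T)^{\dg}$ is obtained identically from the companion identity, using symmetry of $\mc{A}^{\dg}*\mc{A}$. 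No step is deep here; the only genuine care required is bookkeeping with the non-commutative reversal of transposes, so I expect the main obstacle to be organizing the chains of substitution so that each symmetry relation is applied to the correct factor.
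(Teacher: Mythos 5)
Your proof is correct, but there is nothing in the paper to compare it against: the paper states Theorem \ref{th3.31} without any proof (it only gestures at related identities of Liang and Zheng in the Einstein-product setting), so your argument genuinely fills a gap rather than reproducing a printed one. Every step checks out. The auxiliary fact $(\mc{A}^T)^{\dg}=(\mc{A}^{\dg})^T$ follows exactly as you say, with the harmless feature that transposing swaps equations $(3)$ and $(4)$; the two working identities $\mc{A}^T*\mc{A}*\mc{A}^{\dg}=\mc{A}^T$ and $(\mc{A}^{\dg})^T*\mc{A}^T*\mc{A}=\mc{A}$ are valid; and your groupings in the verification that $\mc{Z}=\mc{A}^{\dg}*(\mc{A}^{\dg})^T$ satisfies the four Penrose equations for $\mc{A}^T*\mc{A}$ (with $\mc{B}*\mc{Z}=\mc{Z}*\mc{B}=\mc{A}^{\dg}*\mc{A}$, symmetric by equation $(4)$) are all correct, as are the substitutions in part (ii). Your proof rests on two ingredients the paper itself uses but never formally isolates: the anti-homomorphism property $(\mc{U}*\mc{V})^T=\mc{V}^T*\mc{U}^T$ of the $t$-product transpose (invoked implicitly in the paper's proof of the $\{1,3\}$-inverse characterization) and the uniqueness of the Moore-Penrose inverse over this ring (presupposed by the notation $\mc{A}^{\dg}$, and provable by the standard ring-with-involution argument). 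It would strengthen a written version to flag both explicitly, and to note the bonus your argument delivers: it proves \emph{existence} of $(\mc{A}^T*\mc{A})^{\dg}$ and $(\mc{A}*\mc{A}^T)^{\dg}$ whenever $\mc{A}^{\dg}$ exists, which the bare statement of the theorem takes for granted.
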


In the case of the Moore-Penrose inverse of tensors over a ring with involution, the reverse order law, i.e., $(\mc{A}*\mc{B})^\dg = \mc{B}^\dg*\mc{A}^\dg$, is not true in general. This can be seen from the Example \ref{Ex23} mentioned after our remark.
\begin{remark}
 Theorem \ref{th3.31} (i) is not true if we replace
$\mc{A}^T$ by any other tensor $\mc{B}$, i.e., $(\mc{A}*
\mc{B})^\dg \neq \mc{B}^\dg * \mc{A}^\dg$, where $\mc{A}$  and $
\mc{B} \in \mathbb{R}^{n\times n \times n_3\times n_4 \times \cdots\times n_p}$.
\end{remark}
\begin{example}\label{Ex23}
Let $ \mathcal{A}, \mathcal{B} \in \mathbb{R}^{2\times 2 \times 2}$  with frontal slices 
\begin{eqnarray*}
\mc{A}_{(1)} =
    \begin{pmatrix}
    1 & 1\\
    1 & 0 \\
    \end{pmatrix},~
\mc{A}_{(2)}=
    \begin{pmatrix}
     0 & 1 \\
      1 & 0
    \end{pmatrix},~~
\mc{B}_{(1)} =
    \begin{pmatrix}
    1 & 0 \\
     0 & 1
    \end{pmatrix},~
\mc{B}_{(2)} =
    \begin{pmatrix}
     0 & 2\\
     0 & 0
    \end{pmatrix}.
    \end{eqnarray*}
 Now 
\begin{eqnarray*}
\mc{A}*\mc{B}&=&
\text{fold}( \text{circ}( \text{unfold}(\mc{A})) *  \text{unfold}(\mc{B}))\\
&=&\text{fold}\left(
    \begin{bmatrix}
     \mc{A}_1 & \mc{A}_2\\
     \mc{A}_2 & \mc{A}_1
\end{bmatrix} * 
\begin{bmatrix}
     \mc{B}_1 \\
     \mc{B}_2
\end{bmatrix}
\right) = \text{fold}\left( 
\begin{bmatrix}
     \mc{A}_1*\mc{B}_1+\mc{A}_2*\mc{B}_2 \\
     \mc{A}_2*\mc{B}_1+\mc{A}_1*\mc{B}_2
\end{bmatrix}
\right)
= \text{fold}\left( 
\begin{bmatrix}
     1 & 1\\
     1 & 2\\\hline
     0 & 3\\ 
     1 & 2
\end{bmatrix}
\right),
\end{eqnarray*}
\begin{eqnarray*}
\mc{A}^\dg=
\text{fold}\left( 
\begin{bmatrix}
     1/2 & 1/4\\
     1/4 & -1/8\\\hline
     -1/2 & 1/4\\ 
     1/4 & -1/8
\end{bmatrix}
\right)~~\textnormal{and~~}~~\mc{B}^\dg
= \text{fold}\left( 
\begin{bmatrix}
     1 & 0\\
     0 & 1\\\hline
     0 & -2\\ 
     0 & 0
\end{bmatrix}
\right).
\end{eqnarray*}
We can verify that
\begin{eqnarray*}
\text{fold}\left( 
\begin{bmatrix}
     -2/5 & 1/2\\
     1/20 & -1/8\\\hline
     -3/5 & 1/2\\ 
     9/20 & -1/8
\end{bmatrix}
\right) = (\mc{A}*\mc{B})^\dg \neq \mc{B}^\dg*\mc{A}^\dg
= \text{fold}\left( 
\begin{bmatrix}
     0 & 1/2\\
     1/4 & -1/8\\\hline
     -1 & 1/2\\ 
     1/4 & -1/8
\end{bmatrix}
\right).
\end{eqnarray*}
\end{example}

Our next result deals with the commutative property of $\mc{A}$ and $\mc{A}^\dagger$.

\begin{theorem}
Let $\mc{A}\in \rt$. If $\mc{A} * \mc{A}^T = \mc{A}^T * \mc{A}$, then  $\mc{A} * \mc{A}^{\dg} = \mc{A}^{\dg} * \mc{A}$.
\end{theorem}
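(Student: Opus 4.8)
The plan is to reduce the commutativity of $\mc{A}$ with $\mc{A}^{\dg}$ to the automatic commutativity of a \emph{symmetric} tensor with its own Moore--Penrose inverse, exploiting the two factorizations of $\mc{A}^{\dg}$ supplied by Theorem \ref{th3.31}. First I would set $\mc{M}=\mc{A}^T*\mc{A}$ and note that the hypothesis $\mc{A}*\mc{A}^T=\mc{A}^T*\mc{A}$ means $\mc{M}=\mc{A}*\mc{A}^T$ as well. By Theorem \ref{th3.31}(ii) this gives
\begin{equation*}
\mc{A}^{\dg}=(\mc{A}^T*\mc{A})^{\dg}*\mc{A}^T=\mc{M}^{\dg}*\mc{A}^T \quad\mbox{and}\quad \mc{A}^{\dg}=\mc{A}^T*(\mc{A}*\mc{A}^T)^{\dg}=\mc{A}^T*\mc{M}^{\dg},
\end{equation*}
where in both expressions the inner Moore--Penrose inverse collapses to the single tensor $\mc{M}^{\dg}$ \emph{precisely} because of the hypothesis.

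Second, using these two expressions I would compute the two idempotents directly. On the one hand,
\begin{equation*}
\mc{A}*\mc{A}^{\dg}=\mc{A}*(\mc{A}^T*\mc{M}^{\dg})=(\mc{A}*\mc{A}^T)*\mc{M}^{\dg}=\mc{M}*\mc{M}^{\dg},
\end{equation*}
and on the other hand $\mc{A}^{\dg}*\mc{A}=(\mc{M}^{\dg}*\mc{A}^T)*\mc{A}=\mc{M}^{\dg}*(\mc{A}^T*\mc{A})=\mc{M}^{\dg}*\mc{M}$. Hence the desired identity $\mc{A}*\mc{A}^{\dg}=\mc{A}^{\dg}*\mc{A}$ is \emph{equivalent} to $\mc{M}*\mc{M}^{\dg}=\mc{M}^{\dg}*\mc{M}$.

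Third, I would establish this last identity from the symmetry of $\mc{M}$. Since $\mc{M}^T=(\mc{A}^T*\mc{A})^T=\mc{A}^T*\mc{A}=\mc{M}$, the tensor $\mc{M}$ is symmetric. Taking transposes in the four defining Moore--Penrose equations for $\mc{M}$ and using the anti-homomorphism property $(\mc{X}*\mc{Y})^T=\mc{Y}^T*\mc{X}^T$, one verifies that $(\mc{M}^{\dg})^T$ itself satisfies all four equations $(1)$--$(4)$ relative to $\mc{M}$; by uniqueness of the Moore--Penrose inverse, $(\mc{M}^{\dg})^T=\mc{M}^{\dg}$, so $\mc{M}^{\dg}$ is symmetric as well. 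Then equation $(3)$ applied to $\mc{M}$ says $\mc{M}*\mc{M}^{\dg}$ is symmetric, whence
\begin{equation*}
\mc{M}*\mc{M}^{\dg}=(\mc{M}*\mc{M}^{\dg})^T=(\mc{M}^{\dg})^T*\mc{M}^T=\mc{M}^{\dg}*\mc{M},
\end{equation*}
which is exactly what is needed.

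The two symbolic computations in the second step are routine once the factorizations are in place; the only point requiring care---the main obstacle---is the claim that $\mc{M}^{\dg}$ inherits the symmetry of $\mc{M}$. I would justify it cleanly through the uniqueness of the Moore--Penrose inverse together with the anti-homomorphism property of the transpose, rather than by any entrywise manipulation.
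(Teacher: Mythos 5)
Your proof is correct, but it takes a genuinely different route than the paper's. The paper's argument is a single computational chain: it first extracts from the Moore--Penrose equations the identity $\mc{A}*\mc{A}^{\dg} = (\mc{A}^{\dg})^T*\mc{A}^{\dg}*\mc{A}*\mc{A}^T$, then invokes Theorem \ref{th3.31}(i) (together with the fact $(\mc{A}^T)^{\dg}=(\mc{A}^{\dg})^T$) to rewrite the leading factor as $(\mc{A}*\mc{A}^T)^{\dg}$, uses the hypothesis to exchange $\mc{A}*\mc{A}^T$ with $\mc{A}^T*\mc{A}$, and closes with Theorem \ref{th3.31}(ii), arriving at $\mc{A}*\mc{A}^{\dg}=(\mc{A}^T*\mc{A})^{\dg}*\mc{A}^T*\mc{A}=\mc{A}^{\dg}*\mc{A}$. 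You, by contrast, use only part (ii) of Theorem \ref{th3.31}: the hypothesis collapses both Gram tensors to a single $\mc{M}=\mc{A}^T*\mc{A}=\mc{A}*\mc{A}^T$, giving $\mc{A}*\mc{A}^{\dg}=\mc{M}*\mc{M}^{\dg}$ and $\mc{A}^{\dg}*\mc{A}=\mc{M}^{\dg}*\mc{M}$, and you then settle the symmetric case by a self-contained lemma: $(\mc{M}^{\dg})^T$ satisfies the four Penrose equations for $\mc{M}^T=\mc{M}$, so by uniqueness $\mc{M}^{\dg}$ is symmetric, and Penrose equation $(3)$ forces $\mc{M}*\mc{M}^{\dg}=\mc{M}^{\dg}*\mc{M}$. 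What your route buys is modularity: you avoid both the somewhat delicate opening identity (which requires unwinding conditions $(1)$, $(3)$, $(4)$) and the reverse-order identity of Theorem \ref{th3.31}(i), replacing them with a reusable fact of independent interest, proved purely from uniqueness and the anti-homomorphism property of the transpose. What the paper's route buys is brevity: granted Theorem \ref{th3.31} in full, it is a three-step rewriting with no auxiliary lemma. Both arguments lean on Theorem \ref{th3.31}, which the paper states without proof, so neither is more self-contained at that foundational level.
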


\begin{proof}
By using the definition of the Moore-Penrose inverse, we obtain $
\mc{A}*\mc{A}^{\dg} = (\mc{A}^{\dg})^T *\mc{A}^{\dg} * \mc{A} *\mc{A}^T.$ Using $ \mc{A}* \mc{A}^T = \mc{A}^T * \mc{A} $, and Theorem \ref{th3.31}, we get  
\begin{eqnarray*}
\mc{A}*\mc{A}^{\dg} &=&(\mc{A}^{T})^\dg *\mc{A}^{\dg} * \mc{A} *\mc{A}^T= (\mc{A}*\mc{A}^T)^{\dg}*\mc{A}^T*\mc{A}
=(\mc{A}^T*\mc{A})^{\dg}*\mc{A}^T*\mc{A}=\mc{A}^\dg*A.
\end{eqnarray*}
\end{proof}
The converse of the above result is not true in general as shown by the next example.
\begin{example}
Let $ \mathcal{A} \in \mathbb{R}^{2\times 2 \times 3}$, where  
\begin{eqnarray*}
\mc{A}_{(1)} =
    \begin{pmatrix}
    1 & 2\\
    0 & 3 \\
    \end{pmatrix},~
\mc{A}_{(2)} =
    \begin{pmatrix}
     1 & 4 \\
      -2 & -1
    \end{pmatrix},~\textnormal{and}~
\mc{A}_{(3)}=
    \begin{pmatrix}
     0 & 2 \\
      1 & 3
    \end{pmatrix}.
    \end{eqnarray*}
Let $\mc{B}$ be the transpose of $\mc{A}$. Then 
\begin{eqnarray*}
\mc{B}_{(1)} =
    \begin{pmatrix}
    1 & 0\\
    2 & 3 \\
    \end{pmatrix},~
\mc{B}_{(2)} =
    \begin{pmatrix}
     0 & 1 \\
      2 & 3
    \end{pmatrix},~
\mc{B}_{(3)}=
    \begin{pmatrix}
     1 & -2 \\
      4 & -1
    \end{pmatrix}.
    \end{eqnarray*}
We can verify that  
$ \mc{A}^\dg*\mc{A}= \mc{A}*\mc{A}^\dg$, 
\begin{eqnarray*}
\text{fold}\left( 
\begin{bmatrix}
     7 & 11\\
     11 & 43\\\hline
     -1 & 3\\ 
    -3 & 23\\\hline
     -1 & -3\\ 
     3 & 23
\end{bmatrix}
\right) = \mc{A}^T*\mc{A} \neq \mc{A}*\mc{A}^T
= \text{fold}\left( 
\begin{bmatrix}
     26 & 6\\
     6 & 24\\\hline
     21 & 17\\ 
    15 & 1\\\hline
     21 & 15\\ 
     17 & 1
\end{bmatrix}
\right).
\end{eqnarray*}
\end{example}

We next discuss how to compute the Moore-Penrose inverse through lower triangular tensors. 

\begin{theorem}
Let $\mc{A}\in \rt$. Suppose there exist a lower triangular tensor $\mc{L} \in \rt$ such that $\mc{A}*\mc{A}^T=\mc{L}*\mc{L}^T$ and $\mc{L}^L*\mc{L}$ is invertible, then  
\begin{equation}
\mc{A}^\dg = \mc{A}^T*\mc{L}*(\mc{L}^T*\mc{L})^{-2}*\mc{L}^T. 
\end{equation}
\end{theorem}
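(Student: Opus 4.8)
The plan is to avoid checking the four Penrose equations directly for the stated expression and instead reduce to the symmetric case via Theorem \ref{th3.31}(ii), which gives $\mc{A}^\dg = \mc{A}^T*(\mc{A}*\mc{A}^T)^\dg$. Since the hypothesis supplies $\mc{A}*\mc{A}^T = \mc{L}*\mc{L}^T$, it suffices to prove the single identity
\begin{equation*}
(\mc{L}*\mc{L}^T)^\dg = \mc{L}*(\mc{L}^T*\mc{L})^{-2}*\mc{L}^T,
\end{equation*}
after which the asserted formula follows by left-multiplying with $\mc{A}^T$.

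To establish this identity I would write $\mc{M}=(\mc{L}^T*\mc{L})^{-1}$ and $\mc{W}=\mc{L}*\mc{M}^2*\mc{L}^T$, and verify that $\mc{W}$ satisfies the four equations of Definition \ref{defgi} relative to $\mc{B}=\mc{L}*\mc{L}^T$. Two structural facts drive everything: by the invertibility hypothesis $\mc{M}$ is a genuine two-sided inverse in $\rt$, and since $(\mc{L}^T*\mc{L})^T=\mc{L}^T*\mc{L}$ it is symmetric, $\mc{M}^T=\mc{M}$. The central computation is that both $\mc{B}*\mc{W}$ and $\mc{W}*\mc{B}$ collapse to the same tensor $\mc{L}*\mc{M}*\mc{L}^T$: indeed $\mc{B}*\mc{W}=\mc{L}*(\mc{L}^T*\mc{L})*\mc{M}^2*\mc{L}^T=\mc{L}*\mc{M}*\mc{L}^T$ and $\mc{W}*\mc{B}=\mc{L}*\mc{M}^2*(\mc{L}^T*\mc{L})*\mc{L}^T=\mc{L}*\mc{M}*\mc{L}^T$, each time cancelling a factor $\mc{M}^{-1}=\mc{L}^T*\mc{L}$ against a factor $\mc{M}$. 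This common value is symmetric because $\mc{M}$ is, which yields conditions $(3)$ and $(4)$ at once. Feeding $\mc{L}*\mc{M}*\mc{L}^T$ back into $\mc{B}$ and into $\mc{W}$ and cancelling once more gives $\mc{B}*\mc{W}*\mc{B}=\mc{L}*\mc{L}^T=\mc{B}$ and $\mc{W}*\mc{B}*\mc{W}=\mc{L}*\mc{M}^2*\mc{L}^T=\mc{W}$, i.e.\ conditions $(1)$ and $(2)$. Hence $\mc{W}=(\mc{L}*\mc{L}^T)^\dg$, and then $\mc{A}^\dg=\mc{A}^T*(\mc{A}*\mc{A}^T)^\dg=\mc{A}^T*\mc{W}=\mc{A}^T*\mc{L}*(\mc{L}^T*\mc{L})^{-2}*\mc{L}^T$.

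I expect the only real difficulty to lie in the choice of strategy rather than in any single calculation, all of which are pure ring-theoretic cancellations. A naive attack that substitutes $\mc{X}=\mc{A}^T*\mc{L}*(\mc{L}^T*\mc{L})^{-2}*\mc{L}^T$ directly into the four equations succeeds immediately on $(2)$, $(3)$ and $(4)$ but stalls on $(1)$: verifying $\mc{A}*\mc{X}*\mc{A}=\mc{A}$ reduces to showing that the projector $\mc{P}=\mc{L}*(\mc{L}^T*\mc{L})^{-1}*\mc{L}^T$ fixes $\mc{A}$, and the natural route to $\mc{P}*\mc{A}=\mc{A}$ is to compute $(\mc{A}-\mc{P}*\mc{A})*(\mc{A}-\mc{P}*\mc{A})^T=\mc{O}$, using $\mc{A}*\mc{A}^T=\mc{L}*\mc{L}^T$ together with $\mc{P}*\mc{L}=\mc{L}$, and then invoke a positive-definiteness argument in the Fourier domain (each block satisfies $Y_iY_i^*=\mc{O}\Rightarrow Y_i=\mc{O}$) to conclude that the factor itself vanishes. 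Routing through Theorem \ref{th3.31}(ii) bypasses this analytic step entirely, which is why I prefer it.
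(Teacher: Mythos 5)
Your proposal is correct and takes essentially the same route as the paper: the paper likewise invokes Theorem \ref{th3.31} to reduce the claim to the single identity $(\mc{L}*\mc{L}^T)^\dg=\mc{L}*(\mc{L}^T*\mc{L})^{-2}*\mc{L}^T$, and then verifies the four Penrose equations for this candidate by exactly the cancellations you carry out with $\mc{M}=(\mc{L}^T*\mc{L})^{-1}$. The only differences are cosmetic: you make the symmetry $\mc{M}^T=\mc{M}$ explicit and avoid the paper's confusing reuse of the symbol $\mc{A}$ for $\mc{L}*\mc{L}^T$ inside its own proof.
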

\begin{proof}
By Theorem \ref{th3.31}, it is enough that show only $(\mc{L}*\mc{L}^T)^\dagger=\mc{L}*(\mc{L}^T*\mc{L})^{-2}*\mc{L}^T$. Let $\mc{X}=\mc{L}*(\mc{L}^T*\mc{L})^{-2}*\mc{L}^T$ and $\mc{A}=\mc{L}*\mc{L}^T$. Then
\begin{enumerate}
 \setlength{\parskip}{0pt}
\item[$\bullet$] 
    $\mc{A}*\mc{X}*\mc{A}=\mc{L}*\mc{L}^T*\mc{L}*(\mc{L}^T*\mc{L})^{-2}*\mc{L}^T*\mc{L}*\mc{L}^T=\mc{L}*\mc{L}^T=\mc{A}$,
\item[$\bullet$] 
    $\mc{X}*\mc{A}*\mc{X}=\mc{L}*(\mc{L}^T*\mc{L})^{-2}*\mc{L}^T*\mc{L}*\mc{L}^T*\mc{L}*(\mc{L}^T*\mc{L})^{-2}*\mc{L}^T=\mc{L}*(\mc{L}^T*\mc{L})^{-2}*\mc{L}^T=\mc{X}$,
\item[$\bullet$]
    $(\mc{A}*\mc{X})^T=(\mc{L}*(\mc{L}^T*\mc{L})^{-1}*\mc{L}^T)^T=\mc{L}*(\mc{L}^T*\mc{L})^{-1}*\mc{L}^T=\mc{A}*\mc{X}$, and
\item[$\bullet$]     
    $(\mc{X}*\mc{A})^T=(\mc{L}*(\mc{L}^T*\mc{L})^{-1}*\mc{L}^T)^T=\mc{L}*(\mc{L}^T*\mc{L})^{-1}*\mc{L}^T=\mc{A}*\mc{X}$.
\end{enumerate}
Hence $(\mc{L}*\mc{L}^T)^\dagger=\mc{L}*(\mc{L}^T*\mc{L})^{-2}*\mc{L}^T$.
\end{proof}
In connection with the above theorem we present Algorithm- \ref{ALgoMPIalgo} for computing the  Moore-Penrose inverse of a tensor $\mc{A} \in \rt$. It is worth noting that the Matlab functions (i.e., zeros, sqrt) are used in Algorithm-\ref{ALgoMPIalgo} on line-8 and line-13 to compute ``zeros'' and ``square root'' of matrices in the Fourier domain, respectively. Table-1 demonstrated the efficiency of the proposed Algorithm \ref{ALgoMPIalgo} in terms of time for computing the Moore-Penrose inverse by comparing the different order of random symmetric tensor with the Algorithm-3 in \cite{liangBing2019}.

\begin{algorithm}[hbt!]
\caption{Computation of Moore-Penrose inverse of a tensor $\mc{A}$} \label{ALgoMPIalgo}
\begin{algorithmic}[1]
\Procedure{MPI}{$\mc{A}$}
\State \textbf{Input} $p$, $n,$ $n_3,\ldots,n_p$ and the tensor $\mathcal{A} \in \rt$.
\For{$i \gets 1$ to $p$} 
\State $\mc{A}=\textup{fft}(\mc{A}, [~ ], i);$
\EndFor
\State $C=n_3n_4\cdots n_p$.
\For{$i \gets 1$ to $C$}
\State $r=0;$~~$\mc{L}=\textup{zeros}(size(\mc{A}(:,:,i)));$
\For{$K \gets 1$ to $n$}
\State $r=r+1;$
\State $\mc{L}(k:n,r,i)=A(k:n,k,i)-\mc{L}(k:n,1:(r-1),i)*\text{transpose}(\mc{L}(k,1:(r-1),i));$
\If ~ $L(k,r,i)>\epsilon$ 
\State $\mc{L}(k,r,i)=\textup{sqrt}(\mc{L}(k,r,i));$
\If ~ $k<n$
\State $\mc{L}((k+1):n,r,i)=\mc{L}((k+1):n,r,i)/\mc{L}(k,r,i);$
\EndIf
\Else
\State $r=r-1;$
\EndIf
\EndFor 
\State $\mc{L}(:,:,i)=\mc{L}(:,1:r, i);$
\EndFor 
 \State Compute $\mc{W}=\mc{A}^T*\mc{L}*(\mc{L}^T*\mc{L})^{-2}*\mc{L}^T$.
      \For{$i \gets p$ to $1$} 
      \State $\mc{X} \leftarrow \textup{ifft}(\mc{W}, [~ ], i);$
      \EndFor
      \State \textbf{return} $\mc{X}$
    \EndProcedure
  \end{algorithmic}
\end{algorithm}

\begin{table}[hbt!]
    \centering
     \caption{Comparison analysis for computing Moore-Penrose inverse of random symmetric tensor $\mc{A}$}\label{TableMoore}
             \begin{tabular}{cccc}
    \hline
     Order of $\mc{A}$ & MT & Algorithms  \\
       \hline
        \multirow{2}{*} {$200\times 300\times 400$} & 
       0.034663 &  In \cite{liangBing2019},  Algorithm-3 \\
     &  0.022151 &   Algorithm-\ref{ALgoMPIalgo}  \\
     \hline
       \multirow{2}{*} {$300\times 400\times 500$} &    
       0.061981 &  In \cite{liangBing2019}, Algorithm-3  \\
     & 0.037659  &   Algorithm-\ref{ALgoMPIalgo}  \\
    \hline
      \multirow{2}{*} {$400\times 500\times 600$} & 
      0.469042 & In \cite{liangBing2019}, Algorithm-3 \\
     & 0.135589 &   Algorithm-\ref{ALgoMPIalgo}  \\
     \hline
    \multirow{2}{*} {$500\times 600\times 700$} &  
    0.565222 &  In \cite{liangBing2019}, Algorithm-3 \\
 &  0.149840  &   Algorithm-\ref{ALgoMPIalgo}  \\
     \hline
     \end{tabular}
        \label{tab:table2}
\end{table}

\subsection{Weighted Moore-Penrose inverse}
We introduce generalized weighted Moore-Penrose inverse  an element over a ring $\rt$ as follows:

\begin{definition}\label{43}
Let $\mc{A},\mc{M}, \mc{N} \in \rt$, where $\mc{M},\mc{N}$ are invertible hermitian tensors. If a  tensor $\mc{Y}\in A\{1,2\}$ satisfies 
\begin{eqnarray*}
(3)~(\mc{M}* \mc{A}*\mc{Y})^T = \mc{M}*\mc{A}*\mc{Y};~~~~(4)~(\mc{N}* \mc{Y}*\mc{A})^T =\mc{N}* \mc{Y}*\mc{A},
\end{eqnarray*}
then $\mc{Y}$ is called the generalized weighted Moore-Penrose inverse of $\mc{A}$ and denoted by $\mc{A}_{\mc{M},\mc{N}}^\dg$.
\end{definition}

The uniqueness of the generalized weighted Moore-Penrose inverse is proved in the next result.
\begin{proposition}
Let $\mc{A} \in \rt$, and a pair of invertible hermitian tensors $\mc{M} \in \rt$ and $\mc{N} \in \rt$ be given. If the generalized weighted Moore-Penrose inverse exists then it is unique.
\end{proposition}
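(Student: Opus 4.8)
The plan is to suppose $\mc{A}$ has two generalized weighted Moore-Penrose inverses $\mc{Y}$ and $\mc{Z}$ and to deduce $\mc{Y}=\mc{Z}$, adapting the classical Penrose-style uniqueness argument while tracking the order of factors carefully, since $\rt$ is non-commutative. Both tensors satisfy $(1)$, $(2)$ and the weighted symmetry conditions $(3)$ and $(4)$ of Definition \ref{43}; I will also use that the weights are invertible hermitian, so $\mc{M}^T=\mc{M}$ and $\mc{N}^T=\mc{N}$.

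The first step is to show that the idempotent $\mc{A}*\mc{Y}$ is independent of the chosen inverse, i.e. $\mc{A}*\mc{Y}=\mc{A}*\mc{Z}$. Writing $\mc{P}=\mc{A}*\mc{Y}$ and $\mc{R}=\mc{A}*\mc{Z}$, equation $(1)$ for each inverse gives $\mc{P}=\mc{A}*\mc{Z}*\mc{A}*\mc{Y}=\mc{R}*\mc{P}$ and, symmetrically, $\mc{R}=\mc{P}*\mc{R}$. Multiplying $\mc{P}=\mc{R}*\mc{P}$ on the left by $\mc{M}$ and repeatedly invoking condition $(3)$ (which says $\mc{M}*\mc{P}$ and $\mc{M}*\mc{R}$ are symmetric), I obtain
\[
\mc{M}*\mc{P}=\mc{M}*\mc{R}*\mc{P}=\mc{R}^T*\mc{M}*\mc{P}=\mc{R}^T*\mc{P}^T*\mc{M}=(\mc{P}*\mc{R})^T*\mc{M}=\mc{R}^T*\mc{M}=\mc{M}*\mc{R},
\]
where $\mc{R}=\mc{P}*\mc{R}$ is used at the penultimate equality and the symmetry of $\mc{M}*\mc{R}$ at the last. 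Since $\mc{M}$ is invertible, cancelling it yields $\mc{P}=\mc{R}$, that is, $\mc{A}*\mc{Y}=\mc{A}*\mc{Z}$.

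A completely parallel computation, now using $\mc{N}$, condition $(4)$, and the reduced identities for $\mc{Q}=\mc{Y}*\mc{A}$ and $\mc{S}=\mc{Z}*\mc{A}$, gives $\mc{Y}*\mc{A}=\mc{Z}*\mc{A}$. Combining the two identities through equation $(2)$ then finishes the argument:
\[
\mc{Y}=\mc{Y}*\mc{A}*\mc{Y}=(\mc{Y}*\mc{A})*\mc{Z}=(\mc{Z}*\mc{A})*\mc{Z}=\mc{Z}*\mc{A}*\mc{Z}=\mc{Z}.
\]

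I expect the delicate point to be the displayed chain in the second step: in the non-commutative ring one must use the hermitian symmetry of $\mc{M}*\mc{P}$ and $\mc{M}*\mc{R}$ precisely so that transposition relocates each factor to the correct side, and then collapse $\mc{M}*\mc{R}*\mc{P}$ back to $\mc{M}*\mc{R}$ via $\mc{R}=\mc{P}*\mc{R}$. The invertibility of $\mc{M}$ and $\mc{N}$ is essential for the final cancellations, and this is exactly where the hypotheses on the weight tensors enter.
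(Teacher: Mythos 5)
Your proof is correct; every step checks out, including the delicate transposition chain, which correctly exploits $(\mc{M}*\mc{P})^T=\mc{M}*\mc{P}$, $(\mc{M}*\mc{R})^T=\mc{M}*\mc{R}$, $\mc{M}^T=\mc{M}$, and the anti-homomorphism property $(\mc{P}*\mc{R})^T=\mc{R}^T*\mc{P}^T$ of the t-product transpose. It is, however, organized differently from the paper's proof. The paper never isolates the projector equalities $\mc{A}*\mc{Y}=\mc{A}*\mc{Z}$ and $\mc{Y}*\mc{A}=\mc{Z}*\mc{A}$: writing $\mc{X}_1,\mc{X}_2$ for the two candidate inverses, it runs two inline chains, one starting from $\mc{X}_1=\mc{X}_1*\mc{A}*\mc{X}_1$ and using condition $(4)$ in the form $\mc{X}_1*\mc{A}=\mc{N}^{-1}*\mc{A}^T*\mc{X}_1^T*\mc{N}$ together with $\mc{A}^T=\mc{A}^T*\mc{X}_2^T*\mc{A}^T$, the other starting from $\mc{X}_2=\mc{X}_2*\mc{A}*\mc{X}_2$ and using condition $(3)$ symmetrically, so that both chains terminate at the common expression $\mc{X}_2*\mc{A}*\mc{X}_1$; equality follows at once. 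Thus the paper conjugates by $\mc{N}^{-1},\mc{N}$ (resp. $\mc{M}^{-1},\mc{M}$) inside the computation, while you prove the stronger intermediate facts that the two idempotents $\mc{A}*\mc{Y}$ and $\mc{Y}*\mc{A}$ are invariants of $\mc{A}$ and cancel each weight exactly once at the end. Your route buys cleaner, reusable intermediate statements: indeed, your displayed chain is essentially the same computation the paper itself performs later, in Theorem \ref{2.11} (see equation \eqref{eq23}), to prove uniqueness of the idempotents $\mc{P},\mc{Q}$ attached to $\mc{A}^\dagger_{\mc{M},\mc{N}}$, so your proof in effect unifies the proposition with that theorem. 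The paper's version is more compact, needing no named projectors, but its long chains are harder to audit factor by factor in the non-commutative setting.
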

\begin{proof}
Suppose there exist $\mc{X}_1$ and $\mc{X}_2$ both satisfying the condition $(1)-(4)$. Then 
\begin{eqnarray*}
\mc{X}_1&=&\mc{X}_1*\mc{A}*\mc{X}_1=\mc{N}^{-1}*\mc{A}^T*{\mc{X}_1}^T*\mc{N}*\mc{X}_1
= \mc{N}^{-1}*\mc{A}^T*{\mc{X}_2}^T*\mc{A}^T*{\mc{X}_1}^T*\mc{N}*\mc{X}_1\\
&=& \mc{N}^{-1}*\mc{A}^T*{\mc{X}_2}^T*\mc{N}*\mc{X}_1*\mc{A}*\mc{X}_1= \mc{N}^{-1}*\mc{A}^T*{\mc{X}_2}^T*\mc{N}*\mc{X}_1=\mc{X}_2*\mc{A}*\mc{X}_1,\\
\mc{X}_2 &=& \mc{X}_2*\mc{A}*\mc{X}_2 = \mc{X}_2*\mc{M}^{-1}*{\mc{X}_2}^T*\mc{A}^T*\mc{M}
= \mc{X}_2*\mc{M}^{-1}*{\mc{X}_2}^T*\mc{A}^T*{\mc{X}_1}^T*\mc{A}^T*\mc{M}\\
&=& \mc{X}_2*\mc{M}^{-1}*{\mc{X}_2}^T*\mc{A}^T*\mc{M}*\mc{A}*\mc{X}_1=\mc{X}_2*\mc{A}*\mc{X}_2*\mc{A}*\mc{X}_1 = \mc{X}_2*\mc{A}*\mc{X}_1.
\end{eqnarray*}
Hence $\mc{X}_1=\mc{X}_2*\mc{A}*\mc{X}_1=\mc{X}_2$.
\end{proof}

The existence and computation of the generalized weighted Moore-Penrose inverse is discussed below.
\begin{theorem}\label{2.11}
Let $\mc{A}\in \rt$ and $\mc{M},~\mc{N} \in \rt$ be invetible hermitian tensors. Then the following statements are equivalent:
\begin{enumerate}
 \setlength{\parskip}{0pt}
\item [(i)] $\mc{A}^\dagger_{\mc{M},\mc{N}}$ exists.
\item [(ii)] There exist unique idempotent tensors $\mc{P},~\mc{Q} \in \rt$ such that
\begin{center}
 $\mc{M}*\mc{P}=(\mc{M}*\mc{P})^T$, $\mc{N}*\mc{Q}=(\mc{N}*\mc{Q})^T$, $\mc{P}*\rt=\mc{A}*\rt$,  and $\rt*\mc{Q}=\rt*\mc{A}$.   
\end{center}
\end{enumerate} 
If any one of the statements $(i),~(ii)$ holds, then  $\mc{A}^\dagger_{M,N}=\mc{Q}*\mc{A}^{(1)}*\mc{P}$ and thus is invariant for any choice of $\mc{A}^{(1)}$. 
\end{theorem}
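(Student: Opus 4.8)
The plan is to prove the equivalence $(i)\Leftrightarrow(ii)$ constructively, exhibiting the idempotents explicitly in one direction and rebuilding $\mc{A}^\dagger_{\mc{M},\mc{N}}$ from them in the other, and then to settle the uniqueness and invariance claims separately. For $(i)\Rightarrow(ii)$ I would set $\mc{P}=\mc{A}*\mc{Y}$ and $\mc{Q}=\mc{Y}*\mc{A}$, where $\mc{Y}=\mc{A}^\dagger_{\mc{M},\mc{N}}$. Idempotency is immediate from the inner/outer equations, since $\mc{P}^2=\mc{A}*\mc{Y}*\mc{A}*\mc{Y}=\mc{A}*\mc{Y}=\mc{P}$ and likewise $\mc{Q}^2=\mc{Q}$; the weighted symmetries $\mc{M}*\mc{P}=(\mc{M}*\mc{P})^T$ and $\mc{N}*\mc{Q}=(\mc{N}*\mc{Q})^T$ are exactly conditions $(3)$ and $(4)$ of Definition \ref{43}. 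For the ideal identities, $\mc{P}*\rt\subseteq\mc{A}*\rt$ is clear, while $\mc{A}=\mc{A}*\mc{Y}*\mc{A}=\mc{P}*\mc{A}$ gives the reverse inclusion, so $\mc{P}*\rt=\mc{A}*\rt$; symmetrically $\rt*\mc{Q}=\rt*\mc{A}$ using $\mc{A}=\mc{A}*\mc{Q}$.

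For $(ii)\Rightarrow(i)$ the first point is that the hypotheses already force $\mc{A}$ to be regular, which is what makes the formula $\mc{Q}*\mc{A}^{(1)}*\mc{P}$ meaningful. Indeed, $\mc{P}\in\mc{P}*\rt=\mc{A}*\rt$ lets me write $\mc{P}=\mc{A}*\mc{W}$, and $\mc{A}\in\mc{A}*\rt=\mc{P}*\rt$ together with idempotency of $\mc{P}$ gives $\mc{P}*\mc{A}=\mc{A}$; hence $\mc{A}*\mc{W}*\mc{A}=\mc{P}*\mc{A}=\mc{A}$ and an inner inverse exists. I would then record the four auxiliary identities $\mc{P}*\mc{A}=\mc{A}$, $\mc{A}*\mc{Q}=\mc{A}$ (from the ideal equalities), $\mc{A}*\mc{A}^{(1)}*\mc{P}=\mc{P}$ (from $\mc{P}=\mc{A}*\mc{W}$), and $\mc{Q}*\mc{A}^{(1)}*\mc{A}=\mc{Q}$ (from $\mc{Q}=\mc{V}*\mc{A}$), and set $\mc{Y}=\mc{Q}*\mc{A}^{(1)}*\mc{P}$. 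Conditions $(1)$ and $(2)$ then follow by absorbing $\mc{P}$ and $\mc{Q}$ with these identities, while $\mc{M}*\mc{A}*\mc{Y}=\mc{M}*\mc{A}*\mc{A}^{(1)}*\mc{P}=\mc{M}*\mc{P}$ and $\mc{N}*\mc{Y}*\mc{A}=\mc{N}*\mc{Q}*\mc{A}^{(1)}*\mc{A}=\mc{N}*\mc{Q}$ are symmetric by hypothesis, yielding $(3)$ and $(4)$.

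The step I expect to be the crux is the uniqueness of $\mc{P}$ and $\mc{Q}$, because Lemma \ref{lem3.18} applies only to genuinely symmetric idempotents, whereas here only the weighted products $\mc{M}*\mc{P}$ and $\mc{N}*\mc{Q}$ are symmetric, so the argument must be reworked to exploit invertibility of the weights. Given two idempotents $\mc{P}_1,\mc{P}_2$ with $\mc{P}_1*\rt=\mc{P}_2*\rt=\mc{A}*\rt$, the coincidence of right ideals together with idempotency yields the cross-relations $\mc{P}_2*\mc{P}_1=\mc{P}_1$ and $\mc{P}_1*\mc{P}_2=\mc{P}_2$. Using $\mc{M}^T=\mc{M}$ and the symmetry $\mc{P}_i^T*\mc{M}=\mc{M}*\mc{P}_i$, I would evaluate $(\mc{M}*\mc{P}_1*\mc{P}_2)^T$ in two ways: on one hand $\mc{M}*\mc{P}_1*\mc{P}_2=\mc{M}*\mc{P}_2$ is symmetric, so the transpose equals $\mc{M}*\mc{P}_2$; on the other hand $(\mc{M}*\mc{P}_1*\mc{P}_2)^T=\mc{P}_2^T*\mc{M}*\mc{P}_1=\mc{M}*\mc{P}_2*\mc{P}_1=\mc{M}*\mc{P}_1$. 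Comparing gives $\mc{M}*\mc{P}_1=\mc{M}*\mc{P}_2$, and invertibility of $\mc{M}$ forces $\mc{P}_1=\mc{P}_2$; the identical computation with $\mc{N}$ and the left-ideal condition settles $\mc{Q}$. Finally, invariance under the choice of $\mc{A}^{(1)}$ is immediate: the four weighted equations determine their solution uniquely by the preceding proposition, so the value of $\mc{Q}*\mc{A}^{(1)}*\mc{P}$ cannot depend on which inner inverse is used.
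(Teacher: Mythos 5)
Your proof is correct and follows essentially the same route as the paper's: the same idempotents $\mc{P}=\mc{A}*\mc{Y}$, $\mc{Q}=\mc{Y}*\mc{A}$ in one direction, the same candidate $\mc{Y}=\mc{Q}*\mc{A}^{(1)}*\mc{P}$ with the same absorption identities in the other, and the same cross-relation-plus-weighted-symmetry-plus-invertibility argument for uniqueness of $\mc{P}$ and $\mc{Q}$. Your two small deviations are refinements rather than a different method: you explicitly derive the existence of an inner inverse $\mc{A}^{(1)}$ from the hypotheses of (ii) (the paper uses $\mc{A}^{(1)}$ tacitly), and you get invariance by appealing to the uniqueness of $\mc{A}^\dagger_{\mc{M},\mc{N}}$ from the preceding proposition, whereas the paper computes directly that $\mc{Q}*\mc{X}_1*\mc{P}=\mc{U}*\mc{A}*\mc{V}=\mc{Q}*\mc{X}_2*\mc{P}$ for any two inner inverses $\mc{X}_1,\mc{X}_2$.
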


\begin{proof}
$(i)\Rightarrow (ii)$.  Let $\mc{X}=A^\dagger_{M,N}$. If we define $\mc{P}=\mc{A}*\mc{X}$ and $\mc{Q}=\mc{X}*\mc{A}$, then  $\mc{P}*\rt=\mc{A}*\rt$ and $\rt*\mc{Q}=\rt*\mc{A}$ can be shown easily. Further $\mc{P}=\mc{A}*\mc{X}=\mc{A}*\mc{X}*\mc{A}*\mc{X}=\mc{P}^2$, $\mc{Q}=\mc{X}*\mc{A}=\mc{X}*\mc{A}*\mc{X}*\mc{A}=\mc{Q}^2$, $\mc{M}*\mc{P}=\mc{M}*\mc{A}*\mc{X}=(\mc{M}*\mc{A}*\mc{X})^T=(\mc{M}*\mc{P})^T$, and $\mc{N}*\mc{Q}=\mc{N}*\mc{X}*\mc{A}=(\mc{N}*\mc{X}*\mc{A})^T=(\mc{N}*\mc{Q})^T$. To show the uniqueness of $\mc{P}$ and $\mc{Q}$, suppose there exists two idempotent pairs $(\mc{P},\mc{Q})$ and $(P_1,\mc{Q}_1)$ which satisfies $(b)$. Now from $\rt*\mc{Q}=\rt*\mc{Q}_1$, we have $\mc{Q}=\mc{Q}*\mc{Q}_1$ and $\mc{Q}_1=\mc{Q}_1*\mc{Q}$. From $\mc{Q}=\mc{Q}*\mc{Q}_1$ and $\mc{Q}_1=\mc{Q}_1*\mc{Q}$, we get 
\begin{eqnarray}\label{eq23}
\mc{N}*\mc{Q}=(\mc{N}*\mc{Q})^*=(\mc{N}*\mc{Q}*\mc{N}^{-1}*\mc{N}*\mc{Q}_1)^T=\mc{N}*\mc{Q}_1*\mc{N}^{-1}*\mc{N}*\mc{Q}=\mc{N}*\mc{Q}_1*\mc{Q}=\mc{N}*\mc{Q}_1
\end{eqnarray}
 Pre-multiplying \eqref{eq23} by $N^{-1}$, we obtain $\mc{Q}=\mc{Q}_1$. Using the similar lines, we can prove the uniqueness of $\mc{P}$.\\
$(ii)\Rightarrow (i)$. Let $\mc{P}$ and $\mc{Q}$ be the unique idempotent tensors such that $\mc{M}*\mc{P}=(\mc{M}*\mc{P})^T$, $\mc{N}*\mc{Q}=(\mc{N}*\mc{Q})^T$, $\mc{P}*\rt=\mc{A}*\rt$ and $\rt*\mc{Q}=\rt*\mc{A}$. Then $\mc{A}=\mc{P}*\mc{A}=\mc{A}*\mc{Q}$, $\mc{P}=\mc{A}*\mc{U}$, and $\mc{Q}=\mc{V}*\mc{A}$ for some $\mc{U}, ~\mc{V}\in\rt$. In addition, $\mc{P}=\mc{A}*\mc{A}^{(1)}*\mc{A}*\mc{U}=\mc{A}*\mc{A}^{(1)}*\mc{P}$ and $\mc{Q}=\mc{Q}*\mc{A}^{(1)}*\mc{A}$. Now, consider $\mc{Y}=\mc{Q}*\mc{A}^{(1)}*\mc{P}$. Then $A^\dagger_{M,N}=\mc{Y}=\mc{Q}*\mc{A}^{(1)}*\mc{P}$ is follows from the following identities:
\begin{itemize}
\item $\mc{A}*\mc{Y}*\mc{A}=\mc{A}*\mc{Q}*\mc{A}^{(1)}*\mc{P}*\mc{A}=\mc{A},$
\item $\mc{Y}*\mc{A}*\mc{Y}=\mc{Q}*\mc{A}^{(1)}*\mc{P}*\mc{A}*\mc{Q}*\mc{A}^{(1)}*\mc{P}=\mc{Y}$;
\item $\mc{M}*\mc{A}*\mc{Y}=\mc{M}*\mc{A}*\mc{Q}*\mc{A}^{(1)}*\mc{P}=\mc{M}*\mc{P}=(\mc{M}*\mc{A}*\mc{Y})^T$,
\item $\mc{N}*\mc{Y}*\mc{A}=\mc{N}*\mc{Q}*\mc{A}^{(1)}*\mc{P}*\mc{A}=\mc{N}*\mc{Q}=(\mc{N}*\mc{Y}*\mc{A})^T$.
\end{itemize}

Let $\mc{X}_1$ and $\mc{X}_2$ be two elements of $\mc{A}\{1\}$. From $\mc{P}*\rt=\mc{A}*\rt$ and $\rt*\mc{Q}=\rt*\mc{A}$, we obtain 
\begin{equation}\label{eqq7}
    \mc{P}=\mc{A}*\mc{V}\mbox{ and }\mc{Q}=\mc{U}*\mc{A}~\mbox{ for some }\mc{U},\mc{V}\in\rt.
\end{equation}
Using \eqref{eqq7}, we have 
\begin{eqnarray*}
A^\dagger_{M,N}&=&\mc{Q}*\mc{X}_1*\mc{P}=\mc{U}*\mc{A}*\mc{X}_1*\mc{A}*\mc{V}=\mc{U}*\mc{A}*\mc{V}=\mc{U}*\mc{A}*\mc{X}_2*\mc{A}*\mc{V}=\mc{Q}*\mc{X}_2*\mc{P}.
\end{eqnarray*}
Thus $A^\dagger_{M,N}$ is an invariant for any choice of $\mc{A}^{(1)}$.
\end{proof}

Now we present an algorithm (see the Algorithm-\ref{AlgoSqrt}) for computing square root of a symmetric positive definite  tensor, which will be used for computation of the weighted Moore-Penrose inverse by using the Moore-Penrose inverse. Here, the matrix computation Matlab functions (i.e., eig, sqrt) are utilized in Algorithm-\ref{AlgoSqrt} to compute  square root of a symmetric positive definite tensor $\mc{A} \in \rt$. In fact, the function ``eig'' uses to compute the eigenvalue of matrices in the Fourier domain.
\begin{algorithm}[hbt!]
  \caption{Computation of square root of a symmetric positive definite tensor $\mc{A}$} \label{AlgoSqrt}
  \begin{algorithmic}[1]
    \Procedure{SQRT}{$\mc{A}$}
    \State \textbf{Input} $p$, $n,$ $n_3,\ldots,n_p$ and the tensor $\mathcal{A}\in \rt$.
      \For{$i \gets 1$ to $p$} 
      \State $\mc{A}=\textup{fft}(\mc{A}, [~ ], i);$
      \EndFor
      \State $C=n_3n_4\cdots n_p$
      \For{$i \gets 1$ to $C$} 
       \State $[\mc{V}(:, :, i ), ~\mc{D}(:, :, i )]= \textup{eig}(\mc{A}(:, :, i ));$,
      \EndFor
      \For{$i \gets 1$ to $C$} 
      \State $\mc{S}(:, :, i )= V(:,:,i)\textup{sqrt}(\mc{D}(:,:,i)\textup{inv}(\mc{V}(:,:,i);$ 
      \EndFor
      \For{$i \gets p$ to $1$} 
      \State $\mc{X} \leftarrow \textup{ifft}(\mc{S}, [~ ], i);$
      \EndFor
      \State \textbf{return} $\mc{S}$ \Comment{$\mc{S}$ is the equal to $\mc{A}^{1/2}$}
    \EndProcedure
  \end{algorithmic}
\end{algorithm}

An equivalent characterization for existence of the generalized weighted Moore-Penrose inverse is presented in the next result.

\begin{theorem}\label{GWMPI}
Let $\mc{M},\mc{N}\in \rt$ be an invertible hermitian  tensors and $\mc{A} \in \rt$ .  If $\mc{M}^{1/2}$ and $\mc{N}^{-1/2}$ (the square root of $\mc{M}$ and $\mc{N}^{-1}$ respectively) are exists, then the generalized weighted Moore-Penrose inverse of $\mc{A}$ exists. Moreover 
\begin{center}
    $\mc{A}_{\mc{M},\mc{N}}^\dg = \mc{N}^{-1/2} * ({\mc{M}^{1/2} * \mc{A} *\mc{N}^{-1/2}})^\dg * \mc{M}^{1/2}$.
\end{center}
\end{theorem}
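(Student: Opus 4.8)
The plan is to reduce the weighted inverse to the ordinary Moore-Penrose inverse by a congruence-type change of variables. First I would introduce the auxiliary tensor $\mc{B} = \mc{M}^{1/2} * \mc{A} * \mc{N}^{-1/2}$, whose ordinary Moore-Penrose inverse $\mc{B}^\dg$ exists and satisfies the four Penrose equations $(1)$--$(4)$ of Definition~\ref{defgi}. Since $\mc{M}^{1/2}$ is a square root of the invertible hermitian tensor $\mc{M}$, it is itself symmetric and invertible, with inverse $\mc{M}^{-1/2}$; likewise $\mc{N}^{-1/2}$ is symmetric and invertible, with inverse $\mc{N}^{1/2}$. Writing $\mc{A} = \mc{M}^{-1/2} * \mc{B} * \mc{N}^{1/2}$, I would take $\mc{Y} = \mc{N}^{-1/2} * \mc{B}^\dg * \mc{M}^{1/2}$ as the candidate and verify the four defining conditions of Definition~\ref{43}.

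Conditions $(1)$ and $(2)$ are mechanical. Substituting the expressions for $\mc{A}$ and $\mc{Y}$, the inner factors $\mc{N}^{1/2} * \mc{N}^{-1/2}$ and $\mc{M}^{1/2} * \mc{M}^{-1/2}$ collapse to the identity, leaving $\mc{A} * \mc{Y} * \mc{A} = \mc{M}^{-1/2} * (\mc{B} * \mc{B}^\dg * \mc{B}) * \mc{N}^{1/2} = \mc{A}$ and $\mc{Y} * \mc{A} * \mc{Y} = \mc{N}^{-1/2} * (\mc{B}^\dg * \mc{B} * \mc{B}^\dg) * \mc{M}^{1/2} = \mc{Y}$, using the outer-inverse and inner-inverse identities for $\mc{B}^\dg$.

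The main work is in the weighted symmetry conditions $(3)$ and $(4)$. For $(3)$ I would telescope $\mc{M} * \mc{A} * \mc{Y}$; using $\mc{M} * \mc{M}^{-1/2} = \mc{M}^{1/2}$ this reduces to $\mc{M}^{1/2} * \mc{B} * \mc{B}^\dg * \mc{M}^{1/2}$. Taking the transpose and invoking the symmetry of $\mc{M}^{1/2}$ together with the Penrose symmetry of $\mc{B} * \mc{B}^\dg$ (equation $(3)$ applied to $\mc{B}$) shows this tensor equals its own transpose. Condition $(4)$ is entirely analogous: $\mc{N} * \mc{Y} * \mc{A}$ reduces to $\mc{N}^{1/2} * \mc{B}^\dg * \mc{B} * \mc{N}^{1/2}$, which is symmetric because $\mc{N}^{1/2}$ is symmetric and $\mc{B}^\dg * \mc{B}$ is symmetric. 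Hence $\mc{Y}$ satisfies $(1)$--$(4)$ of Definition~\ref{43}, so the generalized weighted Moore-Penrose inverse exists and equals $\mc{N}^{-1/2} * \mc{B}^\dg * \mc{M}^{1/2}$; its uniqueness is already guaranteed by the preceding uniqueness proposition.

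The step I expect to be the main obstacle is establishing that the square-root tensors $\mc{M}^{1/2}$ and $\mc{N}^{-1/2}$ are themselves hermitian, since the symmetry of $\mc{B} * \mc{B}^\dg$ and $\mc{B}^\dg * \mc{B}$ alone is not enough --- the weight factors must pass through the transpose unchanged for $(3)$ and $(4)$ to close. This is precisely where the positive-definiteness underlying the square-root construction of Algorithm~\ref{AlgoSqrt} enters: a symmetric positive definite tensor admits a symmetric positive definite square root, so $(\mc{M}^{1/2})^T = \mc{M}^{1/2}$ and $(\mc{N}^{-1/2})^T = \mc{N}^{-1/2}$, and consequently $\mc{M} * \mc{M}^{-1/2} = \mc{M}^{1/2}$ and $\mc{N} * \mc{N}^{-1/2} = \mc{N}^{1/2}$, which are the identities driving the telescoping above.
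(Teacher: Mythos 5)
Your proposal is correct and follows essentially the same route as the paper: both define $\mc{B}=\mc{M}^{1/2}*\mc{A}*\mc{N}^{-1/2}$, take the candidate $\mc{N}^{-1/2}*\mc{B}^\dg*\mc{M}^{1/2}$, check the two product identities by telescoping the inner factors, and obtain the weighted symmetry conditions from the symmetry of $\mc{B}*\mc{B}^\dg$, $\mc{B}^\dg*\mc{B}$ and of the square-root tensors (a hypothesis the paper uses tacitly and you rightly flag). One cosmetic slip: you swapped the labels, since $\mc{B}*\mc{B}^\dg*\mc{B}=\mc{B}$ is the \emph{inner}-inverse identity and $\mc{B}^\dg*\mc{B}*\mc{B}^\dg=\mc{B}^\dg$ the \emph{outer} one, but this does not affect the argument.
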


\begin{proof}
Let $\mc{Y}=(\mc{M}^{1/2} * \mc{A} *\mc{N}^{-1/2})^\dg$ and $\mc{X}=\mc{N}^{-1/2}*\mc{Y}*\mc{M}^{1/2}$. From the conditions 
$\mc{M}^{1/2} * \mc{A} *\mc{N}^{-1/2}*\mc{Y}*\mc{M}^{1/2} * \mc{A} *\mc{N}^{-1/2}=\mc{M}^{1/2} * \mc{A} *\mc{N}^{-1/2}$ and $\mc{Y}*\mc{M}^{1/2} * \mc{A} *\mc{N}^{-1/2}*\mc{Y}=\mc{Y}$, we obtain
$\mc{A}*\mc{X}*\mc{A}=\mc{A}$ and $\mc{X}*\mc{A}*\mc{X}=\mc{X}$. Further 
\begin{eqnarray*}
(\mc{M}*\mc{A}*\mc{X})^T&=&(\mc{M}*\mc{A}*\mc{N}^{-1/2}*\mc{Y}*\mc{M}^{1/2})^T=(\mc{M}^{1/2}*\mc{M}^{1/2}*\mc{A}*\mc{N}^{-1/2}*\mc{Y}*\mc{M}^{1/2})^T\\
&=&\mc{M}*\mc{A}*\mc{N}^{-1/2}*\mc{Y}*\mc{M}^{1/2}=\mc{M}*\mc{A}*\mc{X},
\end{eqnarray*}
\begin{equation*}
\begin{split}
(\mc{N}*\mc{X}*\mc{A})^T=&(\mc{N}^{1/2}*\mc{Y}*\mc{M}^{1/2}*\mc{A})^T=(\mc{N}^{1/2}*\mc{Y}*\mc{M}^{1/2}*\mc{A}*\mc{N}^{-1/2}*\mc{N}^{1/2})^T\\
=&\mc{N}^{1/2}*\mc{Y}*\mc{M}^{1/2}*\mc{A}=\mc{N}*\mc{X}*\mc{A}. \qedhere
\end{split}
\end{equation*}
\end{proof}
We employ algorithm-5 for computing the weighted Moore-Penrose inverse. In this algorithm we use the Matlab function ``pinv'' to compute the Moore-Penrose inverse of matrices in the Fourier domain. 
\begin{algorithm}[hbt!]
  \caption{Computation of Weighted Moore-Penrose inverse of a tensor $\mc{A}$} \label{AlgoWMPI}
  \begin{algorithmic}[1]
    \Procedure{WMPI}{$\mc{A}$}
    \State \textbf{Input} $p$, $n,$ $n_3,\ldots,n_p$ and the tensor $\mathcal{A}, \mc{M},\mc{N}\in \rt$.
      \For{$i \gets 1$ to $p$} 
      \State $\mc{A}=\textup{fft}(\mc{A}, [~ ], i);$~~$\mc{M}=\textup{fft}(\mc{M}, [~ ], i);$~~$\mc{N}=\textup{fft}(\mc{N}, [~ ], i);$
      \EndFor
       \State Compute $\mc{S}:=\mc{M}^{1/2}, \mc{T}:=\mc{N}^{-1/2}$ by using Algorithm \ref{AlgoSqrt}.
        \State $C=n_3n_4\cdots n_p$.
      \For{$i \gets 1$ to $C$} 
       \State $\mc{Z}(:, :, i ) = \textup{pinv}((\mc{S}*\mc{A}*\mc{T})(:, :, i ));$
      \EndFor
      \For{$i \gets 1$ to $C$} 
       \State $\mc{W}(:, :, i ) =(\mc{T}*\mc{Z}*\mc{S})(:, :, i ));$
      \EndFor
      \For{$i \gets p$ to $1$} 
      \State $\mc{X} \leftarrow \textup{ifft}(\mc{W}, [~ ], i);$
      \EndFor
      \State \textbf{return} $\mc{X}$
    \EndProcedure
  \end{algorithmic}
\end{algorithm}
Further, the Algorithm \ref{AlgoWMPI} is validated in  the following example. 
\begin{example}
Let $ \mathcal{A}, \mathcal{B} \in \rt $  
 with 
\begin{eqnarray*}
\mc{A}_{(1)} =
    \begin{pmatrix}
    1 & 0 \\
    0 & 1
    \end{pmatrix},~
\mc{A}_{(2)} =
    \begin{pmatrix}
     4 & 2 \\
     -1 & 1
    \end{pmatrix},~~
    \mc{A}_{(3)} =
    \begin{pmatrix}
     1 &  1\\
     1 & 2
    \end{pmatrix},
    \end{eqnarray*}
    
    \begin{eqnarray*}
\mc{M}_{(1)} =
    \begin{pmatrix}
    3 & 1\\
    1 & 6
    \end{pmatrix},~
\mc{M}_{(2)}=
    \begin{pmatrix}
     1 & 1\\
     2 & 5
    \end{pmatrix},~~
    \mc{M}_{(3)} =
    \begin{pmatrix}
    1 & 2\\
    1 & 5
    \end{pmatrix},~~
\mc{N}_{(1)} =
    \begin{pmatrix}
    2 & 0 \\
    0 & 2
    \end{pmatrix},~
\mc{N}_{(2)} =
    \begin{pmatrix}
     1 & 0 \\
     0 & 1
    \end{pmatrix},~~
    \mc{N}_{(3)} =
    \begin{pmatrix}
     1 & 0\\
     0 & 1
    \end{pmatrix}.
    \end{eqnarray*}
Using the Algorithm-\ref{AlgoSqrt} we obtain,
\begin{eqnarray*}
\mc{M}^{1/2}=
fold\left( 
\begin{bmatrix}
      895/557   &      77/1098 \\ 
      77/1098    &   583/305   \\\hline
     131/494      &   77/1098 \\ 
     179/346    &   1372/1349  \\\hline
     131/494     &   179/346\\   
      77/1098    &  1372/1349
\end{bmatrix}
\right)~\textnormal{and~~}\mc{N}^{-1/2}
= fold\left( 
\begin{bmatrix}
      1/6   &         7/24   \\ 
      -1/3   &        -7/12 \\\hline
      -1/6      &     -1/24 \\   
       1/3      &      5/12 \\\hline
       1/6      &     -3/8 \\    
       0        &      5/12  
\end{bmatrix}
\right).
\end{eqnarray*}
By applying Algorithm-\ref{AlgoWMPI}, we get   
\begin{eqnarray*}
\mc{A}_{\mc{M},\mc{N}}^\dg = fold\left( 
\begin{bmatrix}
      3/26      &   -11/26\\    
      -4/13     &      6/13\\\hline 
       9/26     &     -7/26\\    
       1/13     &      5/13 \\\hline
       1/26     &    -21/26\\    
       3/13     &      2/13\\  
\end{bmatrix}
\right).
\end{eqnarray*}
\end{example}

\section{Image deblurring}
Signal and image processing are still a major challenge and has stayed as a  preoccupation for the scientific community. The inclusion of ring theory to the spatial analysis of digital images and computer vision tasks has been carried out in \cite{Lidl94}.  In this section, we apply the Moore-Penrose inverse in image reconstruction problem. The discrete model for the two-dimensional (2D) image blurring (see \cite{hansen}) is represented as
\begin{equation}\label{eq4.11}
   Ax = b,
\end{equation}
where $A$ is the blurring matrix and has some special structure like a banded matrix, Toeplitz or block-Toeplitz matrix (see \cite{calvetti,kilmer11}). Here $x$ is the true image and $b$ is the blurred image. In practice,
 $b$ is corrupted with noise and the blurred matrix $A$ is ill-conditioned. Such type of ill-posed problems are also observed in the discretization of Fredholm integral equations of the first kind,  noisy image restoration, computer tomography, and inverse problems within electromagnetic flow. Ill-posed problems were extensively studied in the context of an inverse problem and image restorations. One can find more details on image restoration and deblurring in \cite{andrews,lagend, MosiC21}.
 
The three-dimensional (3D) color image blurring problem,  often occurs in medical or geographic imaging. It can be written as a tensor equation
\begin{equation}\label{eq4.13}
    \mc{A}*\mc{X}=\mc{B},
\end{equation}
where $\mc{A}$ is the known blurring tensor. Further, $\mc{X}$ and $\mc{B}$ are tensors representing the true image and the blurred image, often corrupt with noise, respectively. In image restoration, the main objective is to establish a blurred free image that requires the approximate solution of a multilinear system given by the equation  \eqref{eq4.13}. To
find the approximate solution of the ill-posed system viz. system \eqref{eq4.13}, several iterative methods such as
preconditioned LSQR (see \cite{kilmer11}), conjugate gradient (CG), $t$-singular value decomposition ($t$-SVD), Golub–Kahan iterative bidiagonalizatios (G-K-Bi-diag), for details see \cite{Kilmer13, guide2020tensor}. 
\begin{figure}[hbt!]\label{im1}
\begin{center}
\subfigure[]{\includegraphics[height=4.77cm, width=5.77cm]{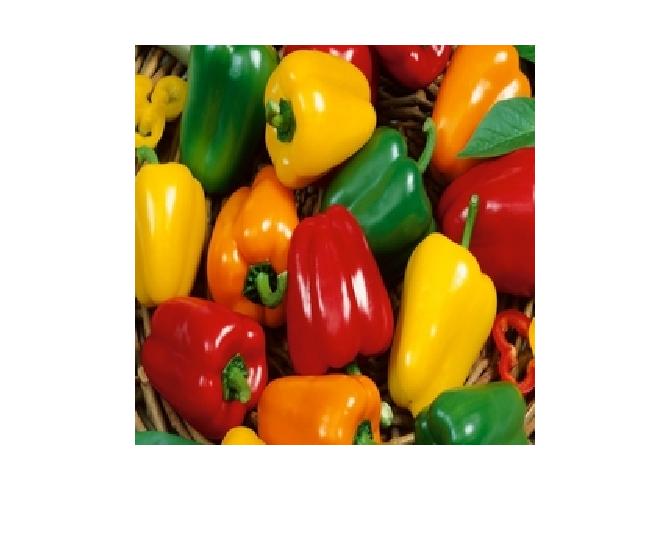}}
\subfigure[]{\includegraphics[height=4.77cm, width=5.77cm]{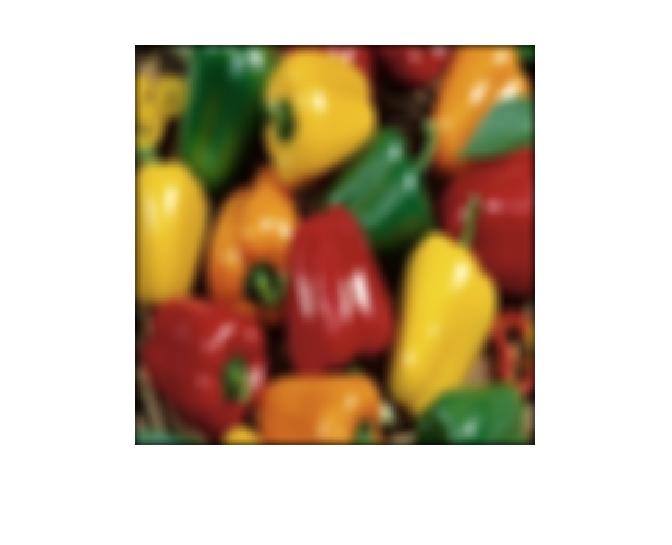}}
\subfigure[]{\includegraphics[height=4.77cm, width=5.77cm]{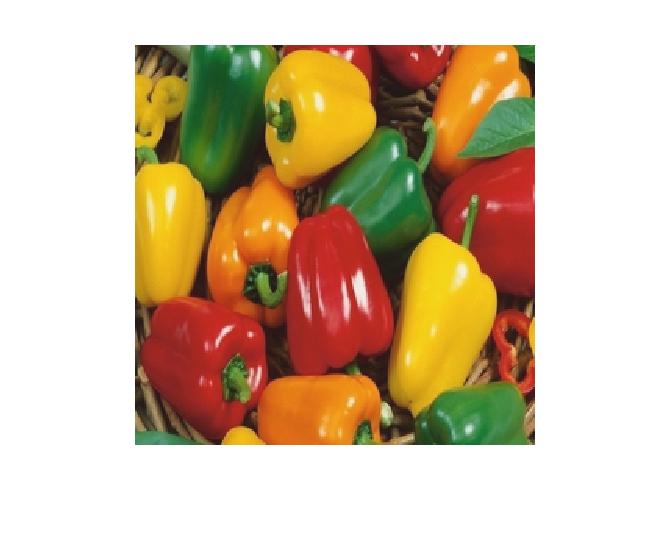}}\\
\subfigure[]{\includegraphics[height=4.77cm, width=5.77cm]{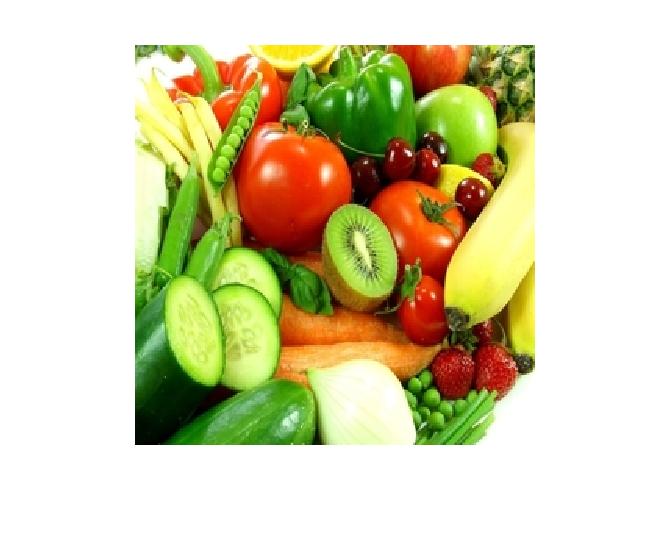}}
\subfigure[]{\includegraphics[height=4.77cm, width=5.77cm]{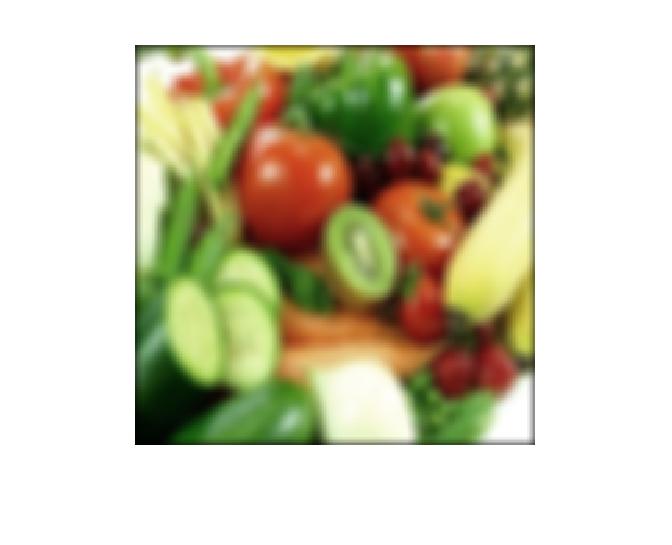}}
\subfigure[]{\includegraphics[height=4.77cm, width=5.77cm]{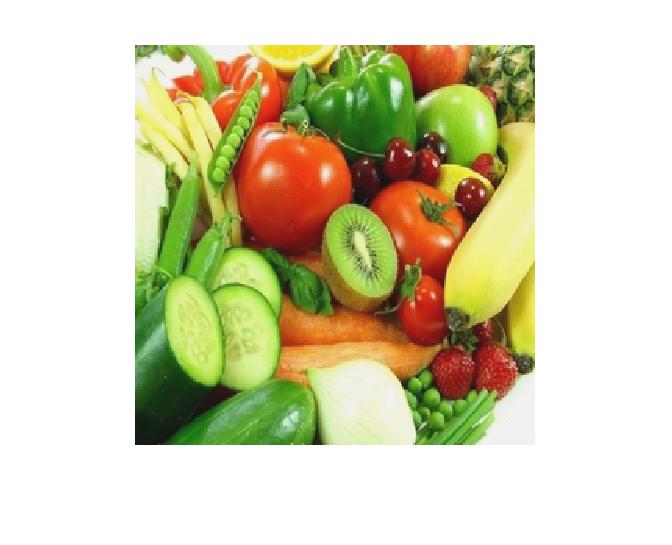}}
\caption{(a) and (d) True image,~~~~~ (b) and (e) Blurred noisy image,~~~~~~~ (c) and (f) Reconstruction image.
}
\end{center}
\end{figure}

It is well known that RGB image is a  third-order tensor, and they often represent the intensities in the red, green, and blue scales. Consider a original error-free color image $\mc{X}$ of size $n \times n \times 3$. Let $X_{(1)}, X_{(2)}$ and $X_{(3)}$ be the slices of size $n\times n$ that constitute the three channels of the image $\mc{X}$ represent the color information. Similarly, consider $B_{(1)}, B_{(2)}$ and $B_{(3)}$ are the slices of size $n\times n$ that associated with error-free blurred color image $\mc{B}$. Consider both cross-channel and within-channel blurring take place in the blurring process of the original image \cite{hansen}. Now, define \textbf{vec} to take an $n \times n$ matrix and return a $n^2 \times 1$ vector by stacking the columns of the matrix from left to right. We now describe the following blurring model as the equivalent form of the Eq.\eqref{eq4.13},
\begin{equation}\label{linmodel}
\left(A^c \otimes {A}^{h}\otimes{A}^{v}\right) \begin{pmatrix}
 \textbf{vec} (X_{(1)})\\
  \textbf{vec} (X_{(2)})\\
   \textbf{vec} (X_{(3)})
\end{pmatrix}=\begin{pmatrix}
 \textbf{vec} (B_{(1)})\\
  \textbf{vec} (B_{(2)})\\
   \textbf{vec} (B_{(3)})
\end{pmatrix},
\end{equation}
where $\otimes$ denotes the Kronecker product of the matrices. ${A}^{h}\in\mathbb{R}^{n\times n}$ and ${A}^{v}\in\mathbb{R}^{n\times n}$ are horizontal and vertical  within-channel blurring matrices, respectively.  Further, $A^c$ is the cross-channel blurring matrix of size $3 \times 3$ as follows: 
\begin{equation*}
A^c=\begin{pmatrix}
c_1 & c_3 & c_2 \\
c_2 & c_1 & c_3 \\
c_3 & c_2 & c_1
\end{pmatrix} \text{~~with~~} \displaystyle\sum_{i=1}^3c_i = 1 \text{~~and~~} c_i \in \mathbb{R} \text{~~for~~} i=1,2,3.
\end{equation*}
Following the circulant structure  of the cross-channel blurring matrix, we can write the following equivalent tensor-tensor model: 
\begin{equation}
\mc{A}* \mc{X} * \mc{C}= \mc{B},
\end{equation}
where $\mc {A}(:,:,k)=c_k {A}^{v}$, for $k=1,2,3$ with $\mc{C}(:,:,1)=({A^{h}})^T$, and $\mc{C}(:,:,2)=\mc{C}(:,:,3)=0$. We construct the blurring tensor $\mc{A}$ using the following symmetric banded Toeplitz matrix:
$$ {A}^{v}_{ij}  = {A}^{h}_{ij} =\left\{\begin{array}{ll}
\frac{1}{\sigma \sqrt{2 \pi}} e^{-\frac{(i-j)^{2}}{2 \sigma^{2}}}, & |i-j| \leq k \\
0, & \text { otherwise }
\end{array}\right.$$
Here $\sigma$ controls the amount of smoothing, i.e., the more ill posed the problem when $\sigma$ is larger.  Further, we use $c_1=0.8$ and $c_2=c_3=0.1$ in the cross-channel blurring matrix $A^c$. 
For numerical experiment, we consider two $256\times256\times 3  $ colour images, and present in Figure 1 (a) and (d). Using $\sigma =4$ and $k=6$, we generate the blurred image $\mc{B} = \mc{A}*\mc{X}*\mc{C} + \mc{N}$. where $\mc{N}$ is a noise tensor distributed normally with mean $0$ and variance $10^{-3}$.  The blurred noisy image is shown in the Figure 1 (b) and (d).
Finally, we have reconstructed the true image using the Moore-Penrose inverse, i.e., the Algorithm \ref{ALgoMPIalgo}. The resulting reconstruction is given in the Figure 1 (c) and (e).

\section{Conclusion}
We have introduced the notion of generalized inverse of tensors over a ring. Our intention is to generalize some known results on generalized inverse of matrices to tensors over the algebraic structure of a ring. Since this tensor product is not a simple extension of the matrix product, we explore effective algorithms for computing generalized inverses, the Moore-Penrose inverse and weighted Moore-Penrose inverse of tensors together with a few supporting algorithms for these inverses, including  transpose of a tensor and square root of a symmetric positive definite tensor. Finally, the algorithm is used to restore the deblurring image via the Moore-Penrose inverse.
\section*{Acknowledgements} 
The first and the third authors are  grateful to the Mohapatra Family Foundation and the College of Graduate Studies of the University of Central Florida for their support for this research.
\section*{Conflict of Interest}
The authors declare no potential conflict of interests.
\section*{ORCID}
Ratikanta Behera\orcidA \href{https://orcid.org/0000-0002-6237-5700}{ \hspace{2mm}\textcolor{lightblue}{ https://orcid.org/0000-0002-6237-5700}}\\
Jajati Keshari Sahoo\orcidB \href{https://orcid.org/0000-0001-6104-5171}{ \hspace{2mm}\textcolor{lightblue}{ https://orcid.org/0000-0001-6104-5171}}\\
R. N. Mohapatra\orcidC \href{https://orcid.org/0000-0002-5502-3934}{ \hspace{2mm}\textcolor{lightblue}{ https://orcid.org/0000-0002-5502-3934}}\\
M. Zuhair Nashed\orcidD \href{https://orcid.org/0000-0002-6009-1720}{ \hspace{2mm}\textcolor{lightblue}{https://orcid.org/0000-0002-6009-1720}}

\end{document}